\theoremstyle{plain}
\newtheorem{theorem}{\bf Theorem}[section]
\newtheorem{conjecture}[theorem]{\bf Conjecture}
\newtheorem{proposition}[theorem]{\bf Proposition}
\newtheorem{lemma}[theorem]{\bf Lemma}
\newtheorem{corollary}[theorem]{\bf Corollary}
\newtheorem{theirtheorem}{Theorem}
\newtheorem{theirproposition}[theirtheorem]{Proposition}
\theoremstyle{definition}
\newcommand{\Z}{\mathbb Z}
\newcommand{\F}{\mathcal F}
\newcommand{\Fc}{\mathcal F}
\newcommand{\vp}{\mathsf v}
\newcommand{\D}{\mathsf D}
\newcommand{\rr}{\mathsf r}
\newcommand{\dd}{\mathsf d}
\newcommand{\ord}{\text{\rm ord}}
\newcommand{\supp}{\text{\rm supp}}
\newcommand{\h}{\mathsf{h}}
\newcommand{\be}{\begin{equation}}
\newcommand{\ee}{\end{equation}}
\newcommand{\bnml}{\begin{multline}}
\newcommand{\enml}{\end{multline}}
\newcommand{\buml}{\begin{multline*}}
\newcommand{\euml}{\end{multline*}}
\newcommand{\ber}{\begin{eqnarray}}
\newcommand{\eer}{\end{eqnarray}}
\newcommand{\nn}{\nonumber}
\newcommand{\Sum}[2]{\underset{#1}{\overset{#2}{\sum}}}
\newcommand{\Summ}[1]{\underset{#1}{\sum}}
\begin{document}

\title{Representation of finite abelian group elements by subsequence sums}
\subjclass[2000]{11B75 (20K01)}
\keywords{zero-sum problem, Davenport constant, weighted subsequence sums, setpartition, $\dd^*(G)$}
\author{D. J. Grynkiewicz $^1$}\thanks{This project was begun while the first author was supported by NSF grant DMS-0502193, and completed while he was supported by FWF project number M1014-N13}
\author{E. Marchan $^2$}
\author{O. Ordaz $^3$}

\address{$^1$ Institut f\"ur Mathematik und Wissenschaftliches Rechnen,
Karl-Franzens-Universit\"at Graz,
Heinrichstra\ss e 36,
8010 Graz, Austria.}
\address{$^2$ Departamento de Matem\'aticas. Decanato de Ciencias y Tecnolog\'{i}as,
Universidad Centroccidental Lisandro Alvarado, Barquisimeto,
Venezuela.}
\address{$^3$ Departamento de Matem\'aticas y Centro ISYS, Facultad de Ciencias,
Universidad Central de Venezuela, Ap. 47567, Caracas 1041-A,
Venezuela.}

\begin{abstract} Let $G\cong C_{n_1}\oplus \cdots \oplus C_{n_r}$ be a finite and nontrivial abelian group with $n_1|n_2|\ldots|n_r$. A conjecture of Hamidoune says that if $W=w_1\cdots w_n$ is a sequence of integers, all but at most one relatively prime to $|G|$, and $S$ is a sequence over $G$ with $|S|\geq |W|+|G|-1\geq |G|+1$, the maximum multiplicity of $S$ at most $|W|$, and $\sigma(W)\equiv 0\mod |G|$, then there exists a nontrivial subgroup $H$ such that every element  $g\in H$ can be represented as a weighted subsequence sum of the form $g=\Sum{i=1}{n}w_is_i$, with $s_1\cdots s_n$ a subsequence of $S$. We give two examples showing this does not hold in general, and characterize the counterexamples for large $|W|\geq \frac{1}{2}|G|$.

A theorem of Gao, generalizing an older result of Olson, says that if $G$ is a finite abelian group, and $S$ is a sequence over $G$ with $|S|\geq |G|+\D(G)-1$, then either every element of $G$ can be represented as a $|G|$-term subsequence sum from $S$, or there exists a coset $g+H$ such that all but at most $|G/H|-2$ terms of $S$ are from $g+H$. We establish some very special cases in a weighted analog of this theorem conjectured by Ordaz and Quiroz, and some partial conclusions in the remaining cases, which imply a recent result of Ordaz and Quiroz. This is done, in part, by extending a weighted setpartition theorem of Grynkiewicz, which we then use to also improve the previously mentioned result of Gao by showing that the hypothesis $|S|\geq |G|+\D(G)-1$ can be relaxed to $|S|\geq |G|+\dd^*(G)$, where $\dd^*(G)=\Sum{i=1}{r}(n_i-1)$. We also use this method to derive a variation on Hamidoune's conjecture valid when at least $\dd^*(G)$ of the $w_i$ are relatively prime to $|G|$.
\end{abstract}

\maketitle
\section{Notation}

We follow the conventions of \cite{GaoGe1} for notation concerning sequences over an abelian group. For real numbers $a,\, b \in \mathbb R$, we
set $[a, b] = \{ x \in \mathbb Z \mid a \le x \le b\}$.
Throughout, all abelian groups will be written additively.
Let $G$  be an abelian group, and let  $A,\, B \subseteq G$ be nonempty subsets. Then  $$A+B = \{a+b
\mid a \in A,\, b \in B \}$$  denotes their  \emph{sumset}. The  \emph{stabilizer}  of $A$ is defined as $H(A) = \{ g \in G \mid g +A = A\}$, and $A$ is called \emph{periodic}  if $H(A) \ne \{0\}$, and \emph{aperiodic} otherwise. If $A$ is a union of $H$-cosets (i.e., $H\leq H(A)$), then we say $A$ is \emph{$H$-periodic}. The order of an element $g\in G$ is denoted $\ord(g)$, and we use $\phi_H:G\rightarrow G/H$ to denote the natural homomorphism. We use $\gcd(a,b)$ to denote the greatest common divisor of $a,\,b\in\Z$.

\medskip

Let $\mathcal F(G)$ be the free monoid with basis $G$. The elements
of $\mathcal F(G)$ are called \emph{sequences} over $G$. We write
sequences $S \in \mathcal F (G)$ in the form
$$
S =  s_1\cdots s_r=\prod_{g \in G} g^{\vp_g (S)}\,, \quad \text{where} \quad
\vp_g (S)\geq 0\mbox{ and } s_i\in G.
$$
We call $|S|:=r=\Summ{g\in G}\vp_g(S)$ the \emph{length} of $S$, and  $\mathsf v_g (S)$  the \ {\it multiplicity} \ of $g$ in
$S$. The \emph{support} of $S$ is $$\supp(S):=\{g\in G\mid \vp_g(S)>0\}.$$  A sequence $S_1 $ is called a \emph{subsequence}  of
$S$ if $S_1 | S$  in $\mathcal F (G)$  (equivalently,
$\vp_g (S_1) \leq\vp_g (S)$  for all $g \in G$), and in such case, $S{S_1}^{-1}$ denotes the subsequence of $S$ obtained by removing all terms from $S_1$.
The \emph{sum} of $S$ is $$\sigma(S):=\Sum{i=1}{r}s_i=\Summ{g\in G}\vp_g(S)g,$$ and we use $$\h(S):=\max \{\vp_g (S) \mid g \in G \}$$ to denote the maximum multiplicity of a term of $S$. A sequence $S$ is $\emph{zero-sum}$ if $\sigma(S)=0$. Given any map $\varphi: G\rightarrow G'$, we extend $\varphi$ to a map of sequences,  $\varphi: \Fc(G)\rightarrow \Fc(G')$,  by letting $\varphi(S):=\varphi(s_1)\cdots \varphi(s_r)$.

\medskip

Next we introduce notation for weighted subsequence sums. Let $S\in\Fc(G)$ and $W\in \Fc(\Z)$ with $S=s_1\cdots s_r$, $W=w_1\cdots w_t$, and $s=\min\{r,\,t\}$ (more generally, we can let $W\in \Fc(G)$ if we endow $G$ with ring structure).
Define $$W\cdot S=\{\Sum{i=1}{s}w_{\sigma(i)}s_{\tau(i)} : \sigma\mbox{ a permutaion of } [1,t]\mbox{ and } \tau \mbox{ a permutation of } [1,r] \},$$ and for $1\leq n\leq s$, let
\ber\Sigma_n(W,S) &=& \left\{W'\cdot S': S'|S,\,W'|W\mbox{ and }|W'|=|S'|=n\right\}\nn\\\nn
\Sigma_{\leq
n}(W,S)&=&\bigcup_{i=1}^{n}\Sigma_i(W,S)\quad \mbox{ and }\quad \Sigma_{\geq n}(W,S)=\bigcup_{i=n}^{s}\Sigma_i(W,S),\\\nn\Sigma(W,S)&=&\Sigma_{\leq s}(W,S).\nn\eer If $W=1^{|S|}$, then $\Sigma(W,S)$ (and other such notation) is abbreviated by $\Sigma(S)$, which is the usual notation for the set of \emph{subsequence sums}. Note that $\Sigma_{|W|}(W,S)=W\cdot S$ when $|W|\leq |S|$.

\medskip

Finally, an \emph{$n$-setpartition} of a
sequence $S\in \Fc(G)$ is a factorization of $S=A_1\cdots A_n$ with $\h(A_i)=1$ for all $i$. By associating $A_i$ and $\supp(A_i)$, we consider each $A_i$ to also be a nonempty subset (in view of $\h(A_i)=1$), and use $A=A_1,\ldots,A_n$ to denote the $n$-setpartition $A$, so as to avoid confusion with $A_1\cdots A_n$, which equals the sequence $S$ partitioned/factorized by $A$. It is easily shown (see \cite{EGZ-II} \cite{rasheed} \cite{PhD-Dissertation}) that $S$ has an $n$-setpartition if and only if $\h(S)\leq n\leq |S|$, and if such is the case, then $S$ has an $n$-setpartition with sets of as near equal a size as possible (i.e., $||A_i|-|A_j||\leq 1$ for all $i$ and $j$). 

%
\section{Introduction}

Let $$G\cong C_{n_1}\oplus\ldots\oplus C_{n_r}$$ be a finite abelian group with $n_1|n_2|\ldots|n_r$, where $C_{n_j}$ denotes a cyclic group of order $n_j\geq 2$. Thus $r$ is the rank $\rr(G)$, $n_1\cdots n_r$ is the order $|G|$, and $n_r$ is the exponent $\exp(G)$. In 1961, Erd\H{o}s, Ginzburg and Ziv proved that every sequence $S\in \Fc(G)$ with $|S|\geq 2|G|-1$ has $0\in \Sigma_{|G|}(S)$ \cite{erdos} \cite{natbook}. This sparked the field of zero-sum problems, which has now seen much development and become an essential component in Factorization Theory (see  \cite{GaoGe1} \cite{Alfred-book} for a recent survey and text on the subject).

One of the oldest and most important invariants in this area is the \emph{Davenport Constant} of $G$, denoted $\D(G)$, which is the least integer so that $S\in \Fc(G)$ with $|S|\geq \D(G)$ implies $0\in\Sigma(S)$. A very basic argument shows \be\label{davconstant-triv-bounds}\dd^*(G)+1\leq \D(G)\leq |G|\ee (see \cite{Alfred-book}), where $$\dd^*(G):=\Sum{i=1}{r}(n_i-1).$$ Originally, the lower bound was favored as the likely truth, but later examples with $\D(G)>\dd^*(G)+1$ were found (see \cite{gao-ger-D(G)-counter} \cite{ger-schneider-D(G)-counter}), and it is not now well understood when $\dd^*(G)+1=\D(G)$ fails, though it is still thought that equality should hold for many instances (and known to be the case for a few) \cite{Alfred-book}.

Gao later linked the study of zero-sums with the study of $|G|$-term zero-sums (and hence results like the Erd\H{o}s-Ginzburg-Ziv Theorem), by showing that $\ell(G)=|G|+\D(G)-1$, where $\ell(G)$ is the least integer so that $S\in \Fc(G)$ with $|S|\geq \ell(G)$ implies $0\in\Sigma_{|G|}(S)$ \cite{Gao-M+D-1}. In the same paper, he also proved the following generalization of an older result of Olson \cite{olson1}.

\smallskip

\begin{theirtheorem}\label{thm-gao-coset-condition}Let $G$ be a finite abelian group, and let $S\in \Fc(G)$ with $|S|\geq |G|+\D(G)-1$. Then either $\Sigma_{|G|}(S)=G$ or there exists a coset $g+H$ such that all but at most $|G/H|-2$ terms of $S$ are from $g+H$.
\end{theirtheorem}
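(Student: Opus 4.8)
\emph{Strategy and reductions.} The plan is to induct on $|G|$ (the case $|G|=1$ being trivial), with a secondary reduction on $|S|$, using Kneser's Theorem, the near-uniform setpartitions recorded in Section~1, Gao's identity $\ell(G)=|G|+\D(G)-1$, and the standard facts $\D(G)\ge\D(H)$, $\D(G)\ge\D(G/H)$ and $\D(G)\ge\D(H)+\D(G/H)-1$ for $H\le G$. Since adding a fixed $a\in G$ to every term of $S$ changes neither $\Sigma_{|G|}(S)$ (each $|G|$-term subsequence sum changes by $|G|\,a=0$) nor the truth of the conclusion (the relevant coset is merely translated by $a$), we may assume after translating that $\vp_0(S)=\h(S)=:h$. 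If $h\ge|S|-|G|+2$, then $S\,0^{-h}$ has at most $|G|-2$ terms, so the conclusion holds with $H=\{0\}$ and $g=0$; so assume $h\le|S|-|G|+1$. If $h>|G|$, remove one copy of a term of maximal multiplicity to get a sequence $\tilde S$ of length $|S|-1\ge|G|+\D(G)-1$; by the secondary induction either $\Sigma_{|G|}(\tilde S)=G$ (whence $\Sigma_{|G|}(S)=G$ and we are done) or $\tilde S$ has a coset $g+H$ outside of which it has at most $|G/H|-2$ terms, and since $\tilde S$ still contains at least $h-1\ge|G|>|G/H|-2$ copies of that maximal-multiplicity term, that term lies in $g+H$, so $S$ has at most $|G/H|-2$ terms outside $g+H$ as well. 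Hence from now on assume $h\le|G|$.

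\emph{The setpartition step.} Fix a $|G|$-setpartition $A=A_1,\dots,A_{|G|}$ of $S$ with blocks of near-equal size (legitimate since $h\le|G|\le|S|$), and note $A_1+\dots+A_{|G|}\subseteq\Sigma_{|G|}(S)$. Put $H=H(A_1+\dots+A_{|G|})$. If $H=G$, then $A_1+\dots+A_{|G|}$ is a nonempty union of $G$-cosets, hence equals $G$, and we are done. So suppose $H\subsetneq G$, write $\bar G=G/H$ and $\bar S=\phi_H(S)$, and let $e$ be the number of indices $i$ for which $A_i$ meets at least two $H$-cosets; for such $i$ one has $|A_i+H|\ge2|H|$, while $|A_i+H|=|H|$ otherwise. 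Kneser's Theorem then gives
$$|A_1+\dots+A_{|G|}|\ \ge\ \sum_{i=1}^{|G|}|A_i+H|-(|G|-1)|H|\ \ge\ (e+1)|H|,$$
so $e\ge|\bar G|-1$ already forces $A_1+\dots+A_{|G|}=G$. Hence we may assume $e\le|\bar G|-2$, together with the degenerate subcase $H=\{0\}$ (in which $A_1+\dots+A_{|G|}$ is aperiodic of size $<|G|$, and the aperiodic form of Kneser's bound directly forces $S$ to be concentrated enough for the conclusion).

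\emph{Passing to the quotient.} Assume now $H\ne\{0\}$ and $e\le|\bar G|-2$, so all but at most $|\bar G|-2$ of the blocks lie each inside a single $H$-coset, and thus the bulk of the terms of $S$ lie in few $H$-cosets. Since $|\bar S|=|S|\ge|G|+\D(G)-1\ge|\bar G|+\D(\bar G)-1$, the induction hypothesis applies to $\bar S$ over $\bar G$. If it produces a coset $\bar g+\bar K$ with $\bar K\subsetneq\bar G$ outside of which $\bar S$ has at most $|\bar G/\bar K|-2$ terms, then $S$ has at most $|\bar G/\bar K|-2=|G/\phi_H^{-1}(\bar K)|-2$ terms outside the corresponding coset of the proper subgroup $\phi_H^{-1}(\bar K)\subsetneq G$, and we are done. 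Otherwise $\Sigma_{|\bar G|}(\bar S)=\bar G$, and it remains to deduce $\Sigma_{|G|}(S)=G$: given $g\in G$, choose a $|\bar G|$-term subsequence $\bar T\mid\bar S$ with $\sigma(\bar T)=\phi_H(g)$, lift it to $T\mid S$, and observe it now suffices to adjoin $|\bar G|(|H|-1)$ further terms of $S$, disjoint from $T$, whose sum is the prescribed element $g-\sigma(T)\in H$ (their $\phi_H$-images then automatically sum to $\bar0$, and the total length is $|\bar G|+|\bar G|(|H|-1)=|G|$). A length-$|\bar G|(|H|-1)$, sum-$\bar0$ subsequence of $\phi_H(S\,T^{-1})$ exists by the Erd\H{o}s--Ginzburg--Ziv/Gao threshold in $\bar G$ (this length being a multiple of $|\bar G|$, iterate $\ell(\bar G)=|\bar G|+\D(\bar G)-1$), and the remaining freedom in choosing the actual terms lifting it is what lets one realize $g-\sigma(T)$: one shows --- using $e\le|\bar G|-2$ together with $|S|\ge|G|+\D(G)-1$ and $\D(G)\ge\D(H)+\D(G/H)-1$ --- that some $H$-coset carries at least $|H|+\D(H)-1$ terms of $S$, so that the induction hypothesis for the smaller group $H$ supplies the missing element of $H$. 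This gives $g\in\Sigma_{|G|}(S)$ and closes the induction.

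\emph{The main obstacle.} The crux is this last lifting argument: one must simultaneously make the two block-lengths total exactly $|G|$, route the within-$H$ correction through an $H$-coset heavy enough to apply the inductive statement to $H$, and keep the prescribed sum $g$ exact, which forces a careful joint use of the Davenport constants of $H$ and of $G/H$ and of the constraint $e\le|\bar G|-2$ on how the setpartition blocks distribute among the $H$-cosets. The degenerate case $H=\{0\}$ and the high-multiplicity case $h>|G|$ each require their own shorter arguments; everything else reduces to Kneser's Theorem, an elementary extremal computation, and the existence of near-uniform setpartitions.
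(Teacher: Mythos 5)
First, note that the paper does not prove this statement at all: it is quoted as Gao's theorem from \cite{Gao-M+D-1}, and the paper's own contribution in this direction is Corollary \ref{cor-gao}, a strengthening (with $|S|\geq |G|+\dd^*(G)$) obtained by an entirely different route through the setpartition machinery of Theorems \ref{thm-ccd-cor-davstar} and \ref{thm-ccd-bonus}. So your attempt must be judged on its own merits, and it has two genuine gaps. The first is the aperiodic case $H=\{0\}$. You assert that when $A_1+\cdots+A_{|G|}$ is aperiodic, ``the aperiodic form of Kneser's bound directly forces $S$ to be concentrated enough for the conclusion.'' It does not: Kneser only gives $|A_1+\cdots+A_{|G|}|\geq |S|-|G|+1\geq \D(G)$, which is far below $|G|$ for non-cyclic groups, and an aperiodic sumset of size $\D(G)$ carries no information forcing $|S|-\h(S)\leq |G|-2$ (which is what conclusion (ii) with trivial $H$ requires). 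This aperiodic case is precisely where the real work lies; handling it is why the paper's route goes through the much stronger structural Theorem E rather than bare Kneser.

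The second gap is the lifting step, which you yourself flag as the crux but do not carry out, and whose key quantitative claim is false. You claim that $e\leq|\bar G|-2$, $|S|\geq|G|+\D(G)-1$ and $\D(G)\geq\D(H)+\D(G/H)-1$ force some $H$-coset to carry at least $|H|+\D(H)-1$ terms of $S$. Take $G=C_3^{\,3}$ and $H\cong C_3$: then $\D(G)=7$, so $|S|=33$ is allowed, while $|H|+\D(H)-1=5$ and there are $9$ cosets of $H$, so the terms of $S$ can be distributed with at most $4$ per coset ($9\cdot 4=36\geq 33$) without violating any of your constraints (the near-equal setpartition has blocks of size $1$ or $2$, and nothing prevents the single-coset blocks from spreading over all nine cosets). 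Even where such a heavy coset exists, applying the induction hypothesis to it inside $H$ may return the sub-coset alternative rather than $\Sigma_{|H|}=H$, and that alternative does not translate into conclusion (ii) for $G$; moreover the bookkeeping making the two pieces total exactly $|G|$ terms, disjoint from $T$ and from the EGZ-extracted blocks, is never done. As it stands the argument is a plausible outline of a Kneser-plus-induction strategy, but both the aperiodic case and the coset-lifting case are open holes rather than routine verifications.
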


\smallskip

\noindent \noindent Thus the number $\ell(G)=|G|+\D(G)-1$ also guarantees that \emph{every} element (not just zero) can be represented as an $|G|$-term subsequence sum, provided no coset contains too many of the terms of $S$.

\medskip

In this paper, we concern ourselves with weighted zero-sum problems related to the above results, though some of our results are new in the non-weighted case as well. Such  variations were initiated by Caro in \cite{carosurvery} where he conjectured the following weighted version of the Erd\H{o}s-Ginzburg-Ziv Theorem, which, after much partial work \cite{alonconjprime} \cite{gao-wegz-partialcase} \cite{hamweightegzgroup} \cite{hamweightsrelprime}, was recently proven in \cite{WEGZ}. (Note the condition $\sigma(W)\equiv 0\mod \exp(G)$ is necessary, else $S$ with $\supp(S)=\{1\}$ would give a counterexample.)

\smallskip

\begin{theirtheorem}\label{thm-WEGZ} Let $G$ be a finite abelian group, and let $S\in \Fc(G)$ and $W\in \Fc(\Z)$ with $\sigma(W)\equiv 0\mod \exp(G)$. If $|S|\geq |W|+|G|-1$, then $0\in \Sigma_{|W|}(W,S)$.
\end{theirtheorem}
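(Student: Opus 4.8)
The plan is to prove the statement by induction on $|G|$ after disposing of some degenerate configurations. Since $\Sigma_{|W|}(W,S')\subseteq\Sigma_{|W|}(W,S)$ whenever $S'\t S$, we may discard terms and assume $|S|=|W|+|G|-1$; since $wg$ depends only on the residue of $w$ modulo $\exp(G)$ for every $g\in G$, and reducing each $w_i$ modulo $\exp(G)$ changes $\sigma(W)$ only by a multiple of $\exp(G)$, we may also assume $0\le w_i<\exp(G)$ without disturbing $\sigma(W)\equiv 0\bmod\exp(G)$. If $|W|=0$ then $0\in\Sigma_0(W,S)=\{0\}$, so assume $|W|\ge 1$; and if $\vp_g(S)\ge|W|$ for some $g\in G$, then the subsequence $g^{|W|}$ with any weight assignment has weighted sum $\sigma(W)g=0$ (because $\ord(g)\t\exp(G)$ and $\exp(G)\t\sigma(W)$), so $0\in\Sigma_{|W|}(W,S)$. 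Hence we may assume $\h(S)\le|W|-1$; in particular $\h(S)<|W|\le|S|$. The base case $|G|=1$ of the induction is trivial.

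For the inductive step, the engine is a weighted Kneser--Cauchy--Davenport bound for setpartition sumsets: a weighted analog of the DeVos--Goddyn--Mohar theorem, which is the weighted setpartition theorem the present paper extends. One models an element $\sum_{i=1}^{|W|}w_is_i$ of $\Sigma_{|W|}(W,S)$ as a sumset over a $|W|$-setpartition of a subsequence of $S$ (obtained by grouping the selected terms $s_i$ according to which value of $W$ multiplies them, with the setpartition allowed to vary), and since $\h(S)\le|W|-1$ this applies. The output is the dichotomy: \emph{either} $\Sigma_{|W|}(W,S)=G$, \emph{or} $\Sigma_{|W|}(W,S)$ is $K$-periodic for some nontrivial $K\le G$; indeed, were $\Sigma_{|W|}(W,S)$ aperiodic the bound would force $|\Sigma_{|W|}(W,S)|\ge|S|-|W|+1=|G|$, so $\Sigma_{|W|}(W,S)=G$ after all. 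In the first case $0\in G=\Sigma_{|W|}(W,S)$ and we are done.

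In the periodic case, put $\overline G=G/K$ and $\overline S=\phi_K(S)$, retaining $W$. Both inclusions in $\phi_K\big(\Sigma_{|W|}(W,S)\big)=\Sigma_{|W|}(W,\overline S)$ are immediate --- the less obvious one because every length-$|W|$ subsequence of $\phi_K(S)$ lifts to a length-$|W|$ subsequence of $S$ --- and, since $\Sigma_{|W|}(W,S)$ is a union of $K$-cosets, $\Sigma_{|W|}(W,S)=\phi_K^{-1}\big(\Sigma_{|W|}(W,\overline S)\big)$. Now $|\overline G|<|G|$, we have $\exp(\overline G)\t\exp(G)\t\sigma(W)$ so $\sigma(W)\equiv 0\bmod\exp(\overline G)$, and $|\overline S|=|S|=|W|+|G|-1\ge|W|+|\overline G|-1$; thus the inductive hypothesis yields $0\in\Sigma_{|W|}(W,\overline S)$, whence $0\in\phi_K^{-1}\big(\Sigma_{|W|}(W,\overline S)\big)=\Sigma_{|W|}(W,S)$. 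This closes the induction.

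The hard part is the weighted setpartition theorem underlying the dichotomy: proving the Kneser-type bound $|\Sigma_{|W|}(W,S)|\ge\min\{|G|,\,|S|-|W|+1\}$ (in fact its sharp form pinning down the stabilizer of $\Sigma_{|W|}(W,S)$) for weighted setpartition sumsets, and coping with the genuinely weighted phenomena --- weights sharing a common factor with $|G|$, and the way the weights interact with the setpartition structure --- rather than the all-ones case of DeVos--Goddyn--Mohar. Its proof rests in turn on the prime-order case $G\cong C_p$, where a Cauchy--Davenport argument on setpartitions (equivalently the polynomial method) supplies the base; once such a theorem is in hand, the general weighted Erd\H{o}s--Ginzburg--Ziv statement drops out of the induction sketched above.
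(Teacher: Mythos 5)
A preliminary remark: the paper does not prove this statement at all --- it is Theorem \ref{thm-WEGZ}, imported from \cite{WEGZ} --- so your attempt can only be judged on its own merits and against the published proof. Your reductions (normalizing $|S|=|W|+|G|-1$, reducing the $w_i$ modulo $\exp(G)$, and disposing of the case $\vp_g(S)\geq |W|$ via $\sigma(W)g=0$) are all correct, and the descent to $G/K$ would be sound \emph{if} you had the dichotomy you assert. But that dichotomy is false. Take $G=\Z/4\Z$, $W=2^8$ and $S=1^5 3^6$: then $\sigma(W)=16\equiv 0 \bmod \exp(G)$, $|S|=11=|W|+|G|-1$, and $\h(S)=6\leq |W|-1$, so this configuration survives all of your reductions; yet every $8$-term weighted sum equals $2\bigl(i+3(8-i)\bigr)\equiv 0 \bmod 4$, so $\Sigma_{|W|}(W,S)=\{0\}$, which is aperiodic, is not $G$, and has cardinality $1<|S|-|W|+1=4$. (Replacing the terms of $W$ by weights with $\sigma(W)\not\equiv 0 \bmod \exp(G)$ in such examples even makes $\Sigma_{|W|}(W,S)$ a single \emph{nonzero} point, which shows that no ``large or periodic'' statement that ignores $\sigma(W)$ can possibly be the engine here.) The underlying reason is that when $\gcd(w_i,\exp(G))>1$ the dilates $w_i\cdot A_i$ collapse, $|w_i\cdot A_i|$ can be far smaller than $|A_i|$, and every Kneser/DeVos--Goddyn--Mohar-type bound of the form $|S|-|W|+1$ breaks down; this is precisely why Theorem \ref{ccd-weights} in the paper carries the hypothesis $w_ig\neq 0$ for all nonzero $g$. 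You have used the hypothesis $\sigma(W)\equiv 0\bmod \exp(G)$ only in the trivial case $\vp_g(S)\geq|W|$, whereas it must enter the hard combinatorial core of the proof.

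Two further points. Even for weights coprime to $\exp(G)$, the setpartition theorem yields that some \emph{particular} $\Sum{i=1}{n}w_i\cdot A_i$ is either large or $H$-periodic; it does not make the full union $\Sigma_{|W|}(W,S)$ periodic, so the identity $\Sigma_{|W|}(W,S)=\phi_K^{-1}\bigl(\Sigma_{|W|}(W,\phi_K(S))\bigr)$ on which your inductive step rests is not licensed by the tool you cite --- one must instead descend with the specific periodic set $\Sum{i=1}{n}w_i\cdot A_i$ in hand. The actual proof in \cite{WEGZ} does run on the setpartition machinery of \cite{ccd}, but its crux is exactly the case your lemma cannot reach: when no $n$-setpartition has $|w_i\cdot A_i|=|A_i|$ for all $i$ (compare Theorem \ref{thm-wegzi}), one extracts elements $x,y$ of high multiplicity with $w_i(x-y)=0$ for many $i$ and uses $\sigma(W)\equiv 0$ to manufacture the zero-sum directly --- the same dichotomy that drives the proof of Theorem \ref{thm-conj-Hamidoune} in Section 3. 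As written, your proposal relocates the entire difficulty into an unproven lemma that is false as stated.
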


\smallskip

Since then, there have been several other results along these lines (see  \cite{adhikari1} \cite{adhikari2} \cite{griff} \cite{ordaz-quiroz} for some examples). However, the following conjecture of Hamidoune remained open \cite{hamweightegzgroup}.

\smallskip

\begin{conjecture}\label{conj-hamidoune}
Let $G$ be a finite, nontrivial abelian group, and let $S\in \F(G)$ and $W\in \Fc(\Z)$ with $|S|\geq |W|+|G|-1\geq |G|+1$ and $\sigma(W)\equiv 0\mod |G|$. If  $\h(S) \leq |W|$ and, for some $w_n\in \Z$,
$\gcd(w_i,|G|)=1$ for all $w_i|w_n^{-1}W$, then $\Sigma_{|W|}(W,S)$ contains a nontrivial subgroup.
\end{conjecture}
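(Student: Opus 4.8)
The plan is to prove the sharper dichotomy, from which the conjecture is immediate: either $\Sigma_{|W|}(W,S)=G$, or $\Sigma_{|W|}(W,S)$ is $H$-periodic for some nontrivial subgroup $H\leq G$. Indeed, since $\exp(G)\t |G|$, the assumption $\sigma(W)\equiv 0\bmod |G|$ forces $\sigma(W)\equiv 0\bmod\exp(G)$, so Theorem~\ref{thm-WEGZ} gives $0\in\Sigma_{|W|}(W,S)$; in the first alternative the conclusion is trivial, and in the second the set contains $0$ and is $H$-periodic, hence contains the nontrivial subgroup $H$. I would argue by induction on $|G|$, after first discarding terms of $S$ so that $|S|=|W|+|G|-1$ exactly — harmless, since deleting terms only lowers $\h(S)$ — and noting $|W|=n\geq 2$.

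Next I would recast the problem as one about a single weighted setpartition sum. Since $\h(S)\leq n\leq|S|$, the sequence $S$ admits an $n$-setpartition $A=A_1,\ldots,A_n$, and for every permutation $\pi$ of $[1,n]$ we have $w_{\pi(1)}A_1+\cdots+w_{\pi(n)}A_n\subseteq\Sigma_{n}(W,S)=\Sigma_{|W|}(W,S)$. The point is to choose $A$ (and $\pi$) so that this sumset is as large as possible; the right tool is an extension to the present, not-all-unit, weights of Grynkiewicz's weighted setpartition theorem, reinforced by the weighted Kneser-type bound of DeVos--Goddyn--Mohar. Since all but one of the $w_i$ are units modulo $|G|$ and so act bijectively on $G$, assigning the exceptional weight $w_n$ to a part of minimal size and letting $H$ be the stabilizer of $B:=w_{\pi(1)}A_1+\cdots+w_{\pi(n)}A_n$, a partition-sumset estimate yields
\[
|B|\;\geq\;\Big(\Sum{i=1}{n}|A_i|\Big)-(n-1)\;=\;|S|-n+1\;=\;|G|
\]
in the aperiodic case $H=\{0\}$ — the borderline pairings, where $w_n$ fails to act injectively on its part, being handled separately using the freedom to permute the near-equal parts, at least when $|W|$ is not too small. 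Thus $H=\{0\}$ already gives $\Sigma_{|W|}(W,S)=G$, and we are left with the periodic case, where $B$ (and thus potentially $\Sigma_{|W|}(W,S)$) is $H$-periodic with $H\neq\{0\}$ and, by the structural half of the setpartition theorem, $S$ is heavily concentrated in a single $H$-coset.

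The remaining step — and the one I expect to be the main obstacle — is to push the induction through in this concentrated case by projecting via $\phi_H\colon G\to G/H$. The numerical hypotheses descend without trouble: $|G/H|\t|G|$, so $\gcd(w_i,|G/H|)=1$ for every $i$ with $w_i\t w_n^{-1}W$, and $\sigma(W)\equiv 0\bmod|G/H|$; one then wants to invoke the inductive hypothesis on $\phi_H(S)$ over $G/H$ (or, in the unweighted model, Theorem~\ref{thm-gao-coset-condition}) and pull a nontrivial subgroup of $G/H$ back through $\phi_H$. The trouble is that the multiplicity constraint $\h(\,\cdot\,)\leq|W|$ is \emph{not} inherited: with $S$ concentrated in few $H$-cosets, $\h(\phi_H(S))$ can be nearly $|S|$, far above $|W|$, and the induction stalls precisely there. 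This is no mere technicality — it is exactly the gap through which the counterexamples announced in the abstract enter — and the way it is circumvented, in the regime where a clean statement does survive, is to assume instead that at least $\dd^*(G)$ of the $w_i$ are units, which forbids the pathological concentration; failing that, one is reduced to characterizing the concentrated configurations, the path the rest of the paper follows.
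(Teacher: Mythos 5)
The statement you were asked to prove is Conjecture \ref{conj-hamidoune}, and it is false: Section 3 of the paper opens by refuting it with two explicit counterexamples and then proves Theorem \ref{thm-conj-Hamidoune}, which characterizes the counterexamples when $|W|\geq \frac{1}{2}|G|$. In Example 1, $G=\Z/p\Z$ with $p\equiv -1\bmod 4$ prime, $n=\frac{p-1}{2}$, $W=1^{(n-1)/2}(-1)^{(n-1)/2}0$ and $S=0^n1^n2^n$, giving $\Sigma_{|W|}(W,S)=G\setminus\{\frac{p-1}{2},\frac{p+1}{2}\}$, which contains no nontrivial subgroup since $|G|$ is prime. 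In Example 2, $G=\Z/2^r\Z$, $n=2^r-1$, $W=1^{(n-1)/2}(-1)^{(n-1)/2}0$ and $S=0^n1^n$, giving $\Sigma_{|W|}(W,S)=G\setminus\{2^{r-1}\}$, which misses the unique element of order $2$ and hence meets no nontrivial subgroup. So no proof can be completed.

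You did sense at the end that a gap lets counterexamples in, but you located it in the wrong place: the obstruction is not the failure of $\h(\cdot)\leq |W|$ to descend under $\phi_H$ (Example 1 lives in a group of prime order, where there is no proper nontrivial subgroup to project by and no coset to concentrate in). The step of yours that actually breaks is the aperiodic-case bound $|B|\geq \sum_{i=1}^{n}|A_i|-(n-1)=|G|$. Any such partition-sumset estimate requires each weight to act injectively on its part, i.e.\ $|w_i\cdot A_i|=|A_i|$; the exceptional weight $w_n$ need not be coprime to $|G|$ (in both examples $w_n\equiv 0$), and the hypotheses $\h(S)=|W|$ and $|S|=|W|+|G|-1$ force every part of every $n$-setpartition to have size at least $2$ (exactly $3$ in Example 1, exactly $2$ in Example 2), so the collapse of $w_n\cdot A_n$ to a point cannot be dodged by permuting parts, and the bound degrades to $|G|-2$, resp.\ $|G|-1$ --- precisely the sizes achieved. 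This is also why the conjecture does hold under any of the extra hypotheses $\h(S)<|W|$, $|W|\geq |G|$, or all $w_i$ coprime to $|G|$. What the paper salvages, Theorem \ref{thm-conj-Hamidoune}, is proved not by the projection induction you outline but via Theorem \ref{thm-wegzi}, Kneser's Theorem, and Kemperman's critical pair theory applied to the near-equal setpartition of $S(01)^{-1}$.
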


\smallskip

Hamidoune verified his conjecture in the case $|W|=|G|$ \cite{hamweightegzgroup}, and under the additional hypothesis of either  $\h(S)<|W|$ or $|W|\geq |G|$ or
$\gcd(w_i,|G|)=1$, for all $w_i|W$, Conjecture \ref{conj-hamidoune} follows from the result in \cite{WEGZ}. In Section 3, we give two examples which show that Conjecture \ref{conj-hamidoune} is false in general, and prove the following theorem, which characterizes the (rather limited) counter-examples for large $|W|\geq \frac{1}{2}|G|$.

\smallskip

\begin{theorem}\label{thm-conj-Hamidoune}
Let $G$ be a finite, nontrivial abelian group, and let $S\in \F(G)$ and $W\in \Fc(\Z)$ with $|S|\geq |W|+|G|-1\geq |G|+1$ and $\sigma(W)\equiv 0\mod |G|$. Suppose $\h(S) \leq |W|$ and, for some $w_n\in \Z$,
$\gcd(w_i,|G|)=1$ for all $w_i|w_n^{-1}W$.
If also $|W|\geq
\frac{1}{2}|G|$, then either:

(i) $\Sigma_{|W|}(W,S)$ contains a nontrivial subgroup, or

(ii) $|\supp(S)|=2$, $|W|=|G|-1$,
$G\cong \Z/2^r\Z$
 and $W\equiv x^{(n-1)/2}(-x)^{(n-1)/2}0\mod |G|$, for some $r,\,n,\,x\in \Z^+$.
\end{theorem}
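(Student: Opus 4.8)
The plan is to make a few reductions that turn the problem into a statement about subset sums of a single integer sequence, and then to treat separately the cases $|\supp(S)|=2$ and $|\supp(S)|\ge 3$. Put $n=|W|$. First, $|\supp(S)|\ge 2$, for $|\supp(S)|=1$ would give $\h(S)=|S|\ge |W|+|G|-1>|W|$, contradicting $\h(S)\le|W|$. As noted in the introduction, if $\h(S)<|W|$, or $|W|\ge|G|$, or every $w_i$ is relatively prime to $|G|$, then the conclusion ``$\Sigma_{|W|}(W,S)$ contains a nontrivial subgroup'' already follows from Theorem~\ref{thm-WEGZ}; so we may assume $\h(S)=n$, that $\frac{1}{2}|G|\le n\le|G|-1$, and that $w_1,\dots,w_{n-1}$ are relatively prime to $|G|$ while $w_n$ is arbitrary. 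Since $\sigma(W)\equiv 0\bmod|G|$, replacing each term $s_i$ of $S$ by $s_i-a$ changes every $|W|$-term weighted sum by $-\sigma(W)a=0$, so $\Sigma_{|W|}(W,S)$ is translation invariant; translate so that $\vp_0(S)=\h(S)=n$ and write $S=0^nS'$ with $0\notin\supp(S')$ and $|S'|=|S|-n\ge|G|-1\ge n$. Decomposing a choice of $n$ terms of $S$ into its zero terms (which contribute nothing, whichever weights they receive) and its nonzero terms yields $\Sigma_{|W|}(W,S)=\Sigma_{\le n}(W,S')\cup\{0\}$, so it suffices to decide, for each value of $|\supp(S')|=|\supp(S)|-1$, when this set contains a nontrivial subgroup.

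Suppose $|\supp(S)|=2$. Then $\supp(S')=\{c\}$ with $c\ne0$, so $\h(S')=|S'|\ge|G|-1\ge n=\h(S)$ forces $|S'|=n$ and $|W|=|G|-1$, and $\Sigma_{|W|}(W,S)=\{\sigma(W')c:W'\mid W\}$, which lies in $\langle c\rangle$ and is the image modulo $d:=\ord(c)\mid|G|$ of the integer subset-sum set $\Sigma(W)$; thus (i) holds iff $\{\sigma(W')\bmod d:W'\mid W\}$ contains a nontrivial subgroup of $\Z/d\Z$. I would use that a sequence of $k$ units of $\Z/d\Z$ has at least $\min\{d,k+1\}$ subset sums, with equality (when $k+1\le d$) forcing all its terms to lie in $\{x,-x\}$ for a single unit $x$. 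If $d<|G|$ then $d\le|G|/2\le n$, so the $n-1\ge d-1$ unit weights alone realize all of $\Z/d\Z$ and (i) holds; hence assume $d=|G|=n+1$, i.e.\ $G=\langle c\rangle\cong\Z/(n+1)\Z$. If $w_n$ is also a unit, all $n$ weights are units and $\Sigma(W)=\Z/(n+1)\Z$, so (i) holds; hence $\gcd(w_n,n+1)>1$. The $(n-1)$-term unit subsequence $U$ of $W$ then has at least $n$ subset sums, so either $\Sigma(U)=\Z/(n+1)\Z$ (then (i)), or $\Sigma(U)=\Z/(n+1)\Z\setminus\{z\}$ with $z\ne0$, in which case the equality characterization gives $U\equiv x^a(-x)^b\bmod(n+1)$ with $a+b=n-1$ for a unit $x$, and $\Sigma_{|W|}(W,S)=\big(\Sigma(U)\cup(w_n+\Sigma(U))\big)c$ covers $G$ unless $w_n\equiv 0\bmod(n+1)$. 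In that remaining case $\sigma(W)\equiv0$ forces $x(a-b)=\sigma(U)\equiv0$, hence $a=b$ and $n+1$ even, and $z\equiv x(a+1)\equiv(n+1)/2\bmod(n+1)$; then $\Sigma_{|W|}(W,S)=(\Z/(n+1)\Z\setminus\{z\})c$ contains a nontrivial subgroup of $\langle c\rangle$ unless $z$ lies in every nontrivial subgroup of $\Z/(n+1)\Z$, which is possible only when $n+1$ is a prime power. Combined with $n+1$ even this gives $n+1=2^r$, $z=2^{r-1}$, and a direct check confirms this configuration is a true counterexample — precisely alternative (ii), with $(n-1)/2=2^{r-1}-1$.

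Now suppose $|\supp(S)|\ge 3$, so $|\supp(S')|\ge 2$; I claim (i) always holds. The key input is that for every $1\le m\le n$, every length-$m$ subsequence $w_{i_1}\cdots w_{i_m}$ of $W$, and every $m$-setpartition $A_1,\dots,A_m$ of a subsequence of $S'$, one has $w_{i_1}A_1+\cdots+w_{i_m}A_m\subseteq\Sigma_m(W,S')\subseteq\Sigma_{\le n}(W,S')$ (pick one term from each block $A_j$ and pair the $m$ chosen terms with the $m$ chosen weights). I would apply the weighted setpartition theorem of Grynkiewicz — in the strengthened form proved later in the paper — using mainly the $n-1$ unit weights and the length bound $|S'|\ge|G|-1$, to produce such a setpartition whose weighted sumset is $H$-periodic of size within $|H|$ of $|G|$; then, since $0\in\Sigma_{|W|}(W,S)$ by Theorem~\ref{thm-WEGZ} and $n\ge\frac{1}{2}|G|$, one forces $\Sigma_{\le n}(W,S')$ to be all of $G$ or to contain the subgroup $H$. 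The part I expect to be genuinely hard is the regime where $\h(S')$ is close to $n$: there $S'$ is only a small perturbation of a one-element sequence $c^{|S'|}$, the setpartition theorem by itself leaves too large a gap, and one must feed the few non-$c$ terms of $S'$ into a hands-on variant of the subset-sum analysis of the previous case so as to translate the attainable subset-sum set off the unique obstructing residue. Making this perturbative bookkeeping close under the weak hypothesis $|W|\ge\frac{1}{2}|G|$ — rather than, say, $|W|\ge|G|$ — is the crux, and is exactly where the extension of Grynkiewicz's setpartition theorem is needed.
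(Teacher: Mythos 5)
Your reduction and your treatment of the case $|\supp(S)|=2$ are sound: the normalization $\vp_0(S)=\h(S)=n$, the identity $\Sigma_{|W|}(W,S)=\Sigma_{\le n}(W,S')\cup\{0\}$, and the subset-sum analysis over $\langle c\rangle$ correctly isolate exactly the exceptional configuration (ii), and this is a more elementary route to that subcase than the paper's (which reaches it as the subcase $|S|=2n$ of a unified Kemperman-type analysis). However, the case $|\supp(S)|\ge 3$ — which is the bulk of the theorem — is not proved; you explicitly leave the ``perturbative bookkeeping'' open, and the tool you propose cannot close it. Theorem \ref{thm-ccd-cor-davstar}(i) only guarantees a weighted sumset of size $\min\{m,\,|S'|-n+1\}$, and with $|S'|\approx m-1$ and $n\approx\frac{1}{2}m$ this is only about $\frac{1}{2}m$; it does not produce an $H$-periodic set ``of size within $|H|$ of $|G|$,'' and the alternative (ii) of that theorem yields a coset-concentration conclusion, not a subgroup inside $\Sigma_{\le n}(W,S')$. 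Indeed, the paper does not use the setpartition theorems for Theorem \ref{thm-conj-Hamidoune} at all.

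The missing idea is the sharper reduction via Theorem \ref{thm-wegzi}: if no $n$-setpartition of $S$ satisfies $|w_i\cdot A_i|=|A_i|$ for all $i$, then there must exist \emph{two} distinct elements $x,y$ with $x^ny^n\mid S$ and $w_n(x-y)=0$; after disposing of $\ord(y-x)<m$ by Kneser, one may take $G$ cyclic, $x=0$, $y=1$, $w_n=0$. This is what makes every block of an $(n-1)$-setpartition of $S(01)^{-1}$ contain $\{0,1\}$, so Kneser already gives $|\Sum{i=1}{n-1}w_i\cdot A_i|\ge m-1$ (not $\approx m/2$), and the problem becomes a critical-pair question: when can equality $m-1$ hold with the sum aperiodic? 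That is answered by Kemperman's Structure Theorem (Lemmas \ref{lemaKST} and \ref{lemaKST-triv-case}), which forces all $A_i$ to be arithmetic progressions with common difference, hence $w_i=\pm w_j$ and $W'=x^{(n-1)/2}(-x)^{(n-1)/2}$, after which a parity/prime-power argument (Claim A) eliminates the case $|S|>2n$. Without some substitute for this critical-pair step, your $|\supp(S)|\ge 3$ case remains a genuine gap rather than a deferrable technicality.
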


\smallskip

Another open conjecture is the following weighted generalization of Theorem \ref{thm-gao-coset-condition} \cite{ordaz-quiroz}. We remark that in the same paper, they showed Conjecture \ref{conjetura} to be true when $|S|=2|G|-1$, and thus for cyclic groups.

\smallskip

\begin{conjecture} \label{conjetura}
Let $G$ be a finite abelian group, and let $W\in \Fc(\Z)$ with $\gcd(w_i,|G|)=1$ for all $w_i|W$,  $|W|=|G|$, and $\sigma(W) \equiv 0 \mod |G|$. If $S\in \Fc(G)$ with  $|S|
=|G| +\D(G)-1$, then either (i)  $\Sigma_{|G|}(W,S)=G$ or (ii) there exists a coset $g+H$ such that all but at most $|G/H|-2$ terms of $S$ are from $g+H$.
\end{conjecture}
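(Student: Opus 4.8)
The plan is to run the argument behind Gao's Theorem~\ref{thm-gao-coset-condition} in the weighted category, with the weighted Erd\H{o}s--Ginzburg--Ziv theorem (Theorem~\ref{thm-WEGZ}) replacing its unweighted precursor and a \emph{weighted} setpartition theorem --- the extension of Grynkiewicz's setpartition theorem referred to above --- supplying the structural/addition-theoretic input. Two simple observations make this feasible. First, since $\gcd(w_i,|G|)=1$ for every $i$, multiplication by $w_i$ is an automorphism $\psi_i$ of $G$, so for any $|G|$-setpartition $A=A_1,\dots,A_{|G|}$ of $S$ and any matching of the terms of $W$ to the blocks,
$$\Sigma_{|G|}(W,S)\ \supseteq\ w_1A_1+\dots+w_{|G|}A_{|G|}\ =\ \psi_1(A_1)+\dots+\psi_{|G|}(A_{|G|}),$$
and since $|\psi_i(A_i)|=|A_i|$ the lower bounds on the size of the right-hand side (Cauchy--Davenport, Kneser, DeVos--Goddyn--Mohar, and the setpartition theorem) are exactly the unweighted ones. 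Second, $\sigma(W)\equiv 0\bmod|G|$ forces $\sigma(W)\,t=0$ for every $t\in G$, so $\Sigma_{|G|}(W,S)$ is unaffected when all terms of $S$ are translated by a common element, which supplies the normalization used repeatedly in Gao's proof.

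Concretely, I would apply the weighted setpartition theorem with $n=|G|$: it either exhibits a $|G|$-setpartition $A$ of $S$ and a matching with $W$ for which $\psi_1(A_1)+\dots+\psi_{|G|}(A_{|G|})=G$, placing us in case (i), or produces such an $A$ together with a nontrivial $H\le G$ for which this sumset is a proper $H$-periodic subset of $G$, in which event $S$ must be shown concentrated on one $H$-coset. The relevant threshold is $|S|-|G|+1=\D(G)$ (and, after the improvement announced in the abstract, $\dd^*(G)+1$), which is exactly why $|S|=|G|+\D(G)-1$ is the natural hypothesis; the case $\h(S)$ large --- few distinct terms --- is where the equal-size setpartition is not tight and the Grynkiewicz-type refinement, which permits a cleverer choice of setpartition, is needed. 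The cyclic case is already due to Ordaz and Quiroz and is painless because every subgroup of a cyclic group is characteristic, hence fixed by all the $\psi_i$; the residual cases would be handled by induction on $|G|$, passing to $G/K$ for a suitable $K\le H$ and lifting coset-by-coset via Theorem~\ref{thm-WEGZ} applied to the fibres with the leftover weights, the (delicate) length count being where $\D(G)$ versus $\dd^*(G)$ again enters.

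The crux --- and, I expect, the reason that only very special cases together with partial coset-type conclusions seem reachable --- is the passage from ``$\psi_1(A_1)+\dots+\psi_{|G|}(A_{|G|})$ is proper and $H$-periodic'' back to ``all but at most $|G/H|-2$ terms of $S$ lie in a single $H$-coset''. Unweighted, $H$-periodicity of the sumset pins each block $A_i$ into an $H$-coset and the conclusion is immediate; weighted, it pins $A_i$ only into a coset of $\psi_i^{-1}(H)$, and these subgroups genuinely differ unless $H$ is invariant under every $\psi_i$. Such invariance is automatic when $G$ is cyclic, and is forced for the short list of candidate $H$'s when $G$ is, for instance, a $p$-group of small rank or has $\D(G)=\dd^*(G)+1$ with the inductive count tight; but in general the $\psi_i^{-1}(H)$ could, a priori, cooperate to scatter $S$ over several cosets while keeping $\Sigma_{|G|}(W,S)$ proper, and ruling this out is the step I do not see how to complete in full generality. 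The method should therefore yield Conjecture~\ref{conjetura} in such structurally constrained situations and only a weakened conclusion (coset concentration with a larger error term) otherwise, consistent with what the abstract promises.
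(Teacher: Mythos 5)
This statement is an open conjecture: the paper does not prove it, and only establishes special cases (Corollary \ref{conjetura-specialcase} for $\h(S)\geq \D(G)-1$, via the exchange argument of Lemma \ref{lemadavid}, and Corollary \ref{cor-spud}, which under the failure of (ii) yields only $\Sigma_{|S|-|G|}(W,S)=G$ rather than $\Sigma_{|G|}(W,S)=G$). Your overall framework --- the weighted setpartition theorem, the observation that $|w_i\cdot A_i|=|A_i|$ reduces the sumset lower bounds to the unweighted ones, and translation-invariance from $\sigma(W)\equiv 0 \bmod |G|$ --- is exactly the paper's framework for its partial results. But you have misdiagnosed where the argument breaks. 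The obstruction you name, that $H$-periodicity of $\sum_i w_i\cdot A_i$ only pins $A_i$ into a coset of $\psi_i^{-1}(H)$ and that these subgroups ``genuinely differ unless $H$ is invariant under every $\psi_i$,'' is vacuous: $\psi_i$ is multiplication by an integer, and every subgroup of an abelian group is setwise invariant under multiplication by any integer (since $w_ih\in\langle h\rangle\leq H$), with $w_iH=H$ whenever $\gcd(w_i,|G|)=1$. So $\psi_i^{-1}(H)=H$ for all $i$ and all $H\leq G$, not just for cyclic $G$; nothing is lost at this step.

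The genuine gap is numerical and lies elsewhere. Applying the setpartition theorem directly with $n=|G|$, as you propose, gives only $|\sum_{i=1}^{|G|}w_i\cdot A_i|\geq \min\{|G|,\,|S|-|G|+1\}=\min\{|G|,\,\D(G)\}=\D(G)$, which is far short of $|G|$ unless $G$ is cyclic. The unweighted proof (and the paper's Corollary \ref{cor-gao}) instead applies the theorem with the \emph{small} parameter $n=|S|-|G|$, obtains $\Sigma_{|S|-|G|}(S)=G$, and then converts this to $\Sigma_{|G|}(S)=G$ via the complementation identity $\Sigma_{n}(S)=\sigma(S)-\Sigma_{|S|-n}(S)$. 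It is precisely this complementation that has no weighted analogue: with $|W|=|G|$ all the weights must be used and matched to a chosen $|G|$-subset of $S$, and discarding the complementary $|S|-|G|$ terms does not correspond to any weighted subsequence sum. The paper states this explicitly in the proof of Corollary \ref{cor-spud} as the reason the conjecture itself is not obtained. Your proposal contains neither a substitute for this complementation step nor the high-multiplicity device (Lemma \ref{lemadavid}, showing $\Sigma(W,S)=\Sigma_{|W|}(W,S)$ when $\vp_0(S)=\h(S)\geq\D(G)-1$) that the paper uses to settle the one extremal case it can.
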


\smallskip

In section 5, we prove some limited results related to Conjecture \ref{conjetura}. In particular, we verify it in the extremal case $\h(S)\geq \D(G)-1$ (allowing also $|S|\geq |G| +\D(G)-1$ provided $\h(S)\leq |G|$), and give a corollary that extends the result of \cite{ordaz-quiroz} and shows, when $\h(S)\leq \D(G)-1$, that the hypotheses of Conjecture \ref{conjetura} (assuming (ii) fails) instead imply $\Sigma_{|S|-|G|}(W,S)=G$. This latter result will follow from the following pair of theorems, which improve (for non-cyclic groups) a corollary from the end of \cite{WEGZ} (see also \cite{ccd} for the non-weighted version, of which this is also an improvement).

\smallskip

\begin{theorem}\label{thm-ccd-cor-davstar} Let $G$ be an abelian group of order $m$, let $W=w_1\cdots w_n$ be a sequence of integers relatively prime to $\exp(G)$, let $S\in \Fc(G)$ and let $S'|S$. Suppose $n\geq \dd^*(G)$ and $\h(S')\leq n\leq |S'|$. Then there exists some $S''|S$ with $|S''|=|S'|$ such that either:

(i) there exists an $n$-setpartition $A=A_1,\ldots,A_n$ of $S''$ such that $$|\Sum{i=1}{n}w_i\cdot A_i|\geq \min\{m,\,|S'|-n+1\},$$ or

(ii) there exists an $n$-setpartition $A=A_1,\ldots,A_n$ of $S''$, a proper, nontrivial subgroup $H\leq G$, and $g\in G$, such that:

(a) $(g+H)\cap A_i\neq \emptyset$ for all $i$, and $\supp(S{S''}^{-1})\subseteq g+H$,

(b) $A_i\subseteq g+H$ for $i\leq \dd^*(H)$ and $i> \dd^*(H)+\dd^*(G/H)$,

(c) $|\Sum{i=1}{n}w_i\cdot A_i|\geq (e+1)|H|$ and all but $e\leq |G/H|-2$ terms of $S$ are from $g+H$, and (d)

$$\Sum{i=1}{\dd^*(H)}w_i\cdot A_i=\left(\Sum{i=1}{\dd^*(H)}w_i\right)g+H.$$
\end{theorem}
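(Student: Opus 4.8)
The plan is to prove Theorem 1.5 by induction on $\dd^*(G)$ (equivalently, on $|G|$), with the base case $\dd^*(G) = 1$, i.e. $G$ cyclic, handled by the already-known (weighted) setpartition results of Grynkiewicz mentioned in the introduction — essentially the weighted Kneser/DeVos--Goddyn--Mohar type bound which gives conclusion (i) directly when $G$ is cyclic (or forces the extremal coset structure (ii) with $H$ trivial... but in the cyclic case $H$ proper nontrivial can still occur, so one uses the cyclic weighted EGZ-type result to get (i) or a structural description). **First I would** fix the $n$-setpartition $A = A_1, \ldots, A_n$ of some $S'' \mid S$ with $|S''| = |S'|$ of near-equal block sizes, and apply the weighted Cauchy--Davenport/Kneser inequality to $\Sum{i=1}{n} w_i \cdot A_i$: letting $K = H(\Sum{i=1}{n} w_i \cdot A_i)$ be the stabilizer of the weighted sumset, Kneser's theorem gives
$$\left|\Sum{i=1}{n} w_i \cdot A_i\right| \geq \Sum{i=1}{n}\left(|\phi_K(w_i \cdot A_i)| - 1\right)|K| + |K| = \Sum{i=1}{n}|A_i + K| - (n-1)|K|,$$
using that $w_i$ is a unit mod $\exp(G)$ so $|w_i \cdot A_i| = |A_i|$ and $w_i \cdot A_i$ has the same stabilizer behaviour. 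If $K = G$ we are immediately in case (i) (with a trivial group or the strong bound). The crux is the dichotomy: either the $|\phi_K(w_i \cdot A_i)| - 1$ contributions are large enough that the bound reaches $\min\{m, |S'| - n + 1\}$ — giving (i) after possibly re-choosing $S''$ to load the "extra" $|S'| - n$ terms into blocks that still increase the sumset — or they are small, forcing many blocks to be single $K$-cosets' worth, which is exactly the setup for (ii) with $H = K$.

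**The main work** will be the second horn: when the Kneser bound falls short, one shows $K$ is proper and nontrivial, sets $H := K$, and must arrange the blocks so that (a)--(d) hold. Conditions (a) and the containment $\supp(S {S''}^{-1}) \subseteq g+H$ come from: the terms of $S$ not placed into $S''$ (there are $|S| - |S'|$ of them) must lie in the coset $g+H$, since otherwise one could swap such a term into a block to enlarge $|\phi_H(w_i \cdot A_i)|$ and push the bound up, contradicting minimality. Condition (c) — "all but $e \leq |G/H| - 2$ terms of $S$ from $g+H$", with $|\Sum w_i \cdot A_i| \geq (e+1)|H|$ — follows by counting: the number of blocks $A_i$ that are *not* contained in a single $H$-coset is at most $e$ (each such block contributes a term outside $g+H$ or "wastes" sumset potential), and the $\phi_H$-image sumset $\Sum w_i \cdot \phi_H(A_i)$ in $G/H$ has size $\geq e+1$ by Cauchy--Davenport in $G/H$ applied to the non-trivial-image blocks. **The genuinely delicate step** is (b) and (d): re-indexing the $n$-setpartition so that blocks lying in $g+H$ occupy the first $\dd^*(H)$ and last positions, and then invoking the induction hypothesis *inside* $g+H$ (a translate of $H$) applied to the subsequence $\prod_{i \le \dd^*(H)} A_i$ with the weight sub-sequence $w_1 \cdots w_{\dd^*(H)}$ — this is where Theorem 1.5 for the smaller group $H$ yields (d) as its conclusion (i)-analog $\Sum_{i=1}^{\dd^*(H)} w_i \cdot A_i = (\Sum w_i) g + H$, using $|H| $-term near-full coverage. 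One must check the hypothesis $|S'| \ge$ enough terms lie in $g+H$ to feed this inductive call, which is guaranteed by (a) and (c).

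**The hard part will be** the bookkeeping that reconciles three simultaneous demands on the chosen $S''$ and its $n$-setpartition: (1) that $S''$ differs from $S'$ only by swapping in/out terms of $g+H$ (to preserve $|S''| = |S'|$ and keep $\supp(S{S''}^{-1}) \subseteq g+H$), (2) that the first $\dd^*(H)$ blocks are entirely within $g+H$ and saturate the sumset $(\Sum w_i)g + H$ — requiring the inductive application to $H$ be set up with exactly the right number of $g+H$-terms and with block sizes controlled — and (3) that the remaining $n - \dd^*(H)$ blocks, pushed down to $G/H$, realize a Cauchy--Davenport sum of size $\geq e+1$ with the right $H$-periodic structure. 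I expect the extremal-case analysis — verifying that when Kneser's bound is *exactly* tight one cannot do better by a local swap, hence the rigid structure (ii) genuinely holds — to be the most technical, and it is precisely here that the extension of Grynkiewicz's weighted setpartition theorem (advertised in the abstract) is needed: the off-the-shelf version does not track the subgroup $H$, the coset $g+H$, and the partition of positions into the three ranges $[1,\dd^*(H)]$, $(\dd^*(H), \dd^*(H)+\dd^*(G/H)]$, $(\dd^*(H)+\dd^*(G/H), n]$ simultaneously, so a good portion of the proof will be upgrading that theorem to carry this extra data through its own induction.
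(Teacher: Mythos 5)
Your overall architecture (induct on the group, apply a Kneser-type bound to an $n$-setpartition, and split into a ``large sumset'' horn and a ``structured'' horn with inductive calls inside $H$ and $G/H$) matches the paper's in spirit, but there are genuine gaps. First, applying Kneser's Theorem to a \emph{single fixed} near-equal $n$-setpartition does not yield the bound $|S'|-n+1$ in case (i): the images $\phi_K(w_i\cdot A_i)$ can all be small, and the ``re-choose $S''$ to load the extra terms'' and ``swap a term in to enlarge the image'' steps you gesture at are precisely the content of the weighted setpartition theorem (Theorem \ref{ccd-weights}, assembled from \cite{ccd}, \cite{WEGZ} and \cite{hamconj}) that the paper invokes as the engine of the entire proof; without that result, or a re-derivation of its iterative setpartition-modification argument, your first horn is unproved. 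Second, the paper does not extract conclusion (ii) from the stabilizer of one Kneser application. It proves Theorems \ref{thm-ccd-cor-davstar} and \ref{thm-ccd-bonus} \emph{simultaneously}: failure of (i) is shown (Claim B, split into the cases $N\geq 2$ and $N=1$) to produce a nontrivial subgroup $K$ carrying a full $\dd^*(K)$-term weighted subsum equal to an entire coset of $K$, together with enough leftover terms in that coset; one then takes $K$ \emph{maximal} with this property. The maximality is what forces either $K=G$ (hence (i)) or the rigid structure (ii), and in particular the bound $e\le |G/H|-2$ in (ii)(c) is obtained by an inductive application of the theorem to $\phi_K(R)0^{|G/K|-1}$ in the quotient $G/K$ --- not by a Cauchy--Davenport count over the blocks with nontrivial image, as you propose.

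Third, your three-range indexing $[1,\dd^*(H)]$, $(\dd^*(H),\dd^*(H)+\dd^*(G/H)]$, $(\dd^*(H)+\dd^*(G/H),n]$ silently assumes $\dd^*(H)+\dd^*(G/H)\le \dd^*(G)\le n$. This inequality is not obvious (the paper devotes Section 4 to proving it, via independent elements and the extraction of a common cyclic direct summand of $H$ and $G$), and without it the allocation of weights between the subgroup and the quotient breaks down. Your instinct that (d) comes from an inductive call inside a translate of $H$ is only half right: the paper uses such a call in one subcase, but in the complementary subcase (where no suitable subsequence of $g+H$-terms with small enough multiplicity exists) it instead relies on Lemma \ref{lem-split}, which shows that $\dd^*(H)$ unit-weighted copies of a generating subset already cover a full coset. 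Finally, a smaller point: the base case is $|G|$ prime (the induction is on the number of prime factors of $|G|$), not ``$\dd^*(G)=1$, i.e.\ $G$ cyclic'' --- note $\dd^*(G)=1$ only for $G\cong C_2$, and cyclic groups of composite order are not a base case here.
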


\smallskip

\begin{theorem}\label{thm-ccd-bonus}Let $G$ be an abelian group of order $m$, let $W=w_1\cdots w_n$ be a sequence of integers relatively prime to $\exp(G)$, let $S\in \Fc(G)$ and let $S'|S$. Suppose $n\geq \dd^*(G)$ and $\h(S')\leq n\leq |S'|$.

Suppose there exists a nontrivial subgroup $K$, $g'\in G$, and a $\dd^*(K)$-setpartition $B=B_1,\ldots,B_{\dd^*(K)}$ of a subsequence $T|S$ with $T\in \Fc(g'+K)$, such that $$\Sum{i=1}{\dd^*(K)}w_i\cdot B_i=\left(\Sum{i=1}{\dd^*(K)}w_i\right)g'+K$$ and $T^{-1}S$ contains at least $n-\dd^*(K)+|S|-|S'|$ terms from $g'+K$, and let $K$  be a maximal such subgroup. Then the following hold.

(i) If $K=G$, then there is an $n$-setpartition $A=A_1,\ldots,A_n$ of a subsequence $S''|S$ such that $|S'|=|S''|$ and  $$\Sum{i=1}{n}w_i\cdot A_i=G.$$

(ii) If $K<G$, then the conclusion of Theorem \ref{thm-ccd-cor-davstar}(ii) holds with $H=K$.
\end{theorem}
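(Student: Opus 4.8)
I would treat the two stated cases separately, the case $K<G$ being the substance and proceeding by passage to the quotient $\bar G:=G/K$.

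\emph{The case $K=G$.} Here $B=B_1,\dots,B_{\dd^*(G)}$ is a $\dd^*(G)$-setpartition of $T\t S$ with $\Sum{i=1}{\dd^*(G)}w_i\cdot B_i=G$, and the term hypothesis reduces to $|T|\le|S'|$. Since $\vp_g(T)\le\min\{\vp_g(S),\,n\}$ for every $g$ (as $T\t S$ and $\h(T)\le\dd^*(G)\le n$) while $|S'|\le\Summ{g}\min\{\vp_g(S),\,n\}$ (as $\h(S')\le n$), a flow argument yields $S''$ with $T\t S''\t S$, $|S''|=|S'|$ and $\h(S'')\le n$; taking any $n$-setpartition $A=A_1,\dots,A_n$ of $S''$ with $B_i\subseteq A_i$ for $i\le\dd^*(G)$ and using monotonicity of Minkowski sums gives $\Sum{i=1}{n}w_i\cdot A_i\supseteq G$, which is (i). The same device, carried out inside the coset $g'+K$, disposes of the degenerate subcase of the case $K<G$ in which essentially all terms of $S$ lie in $g'+K$, producing there conclusion (ii) with $H=K$ directly (the weighted sum then being the full coset $\bigl(\Summ{i}w_i\bigr)g'+K$).

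\emph{The case $K<G$: set-up.} Write $\phi=\phi_K$, $n''=n-\dd^*(K)$, and let $e$ be the number of terms of $S$ outside $g'+K$; since $T\in\Fc(g'+K)$ these all lie in $T^{-1}S$, so the hypothesis that $T^{-1}S$ has at least $n-\dd^*(K)+|S|-|S'|$ terms in $g'+K$ rearranges to $(|S'|-|T|)-n''+1\ge e+1$. Using the standard inequality $\dd^*(K)+\dd^*(G/K)\le\dd^*(G)$ one gets $n''\ge\dd^*(\bar G)$, so Theorem~\ref{thm-ccd-cor-davstar} is applicable over $\bar G$ with weight sequence $w_{\dd^*(K)+1}\cdots w_n$, with $\phi(T^{-1}S)$ in the role of $S$, and with a suitable height-$\le n''$ subsequence of $\phi(T^{-1}S)$ of length $|S'|-|T|$ in the role of $S'$ (such a subsequence exists outside the degenerate subcase just mentioned).

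\emph{The case $K<G$: execution.} The plan is to use the maximality of $K$ to force the ``large sumset'' alternative of Theorem~\ref{thm-ccd-cor-davstar} over $\bar G$, then lift. If alternative (ii) held over $\bar G$ relative to a proper nontrivial $\bar H=H/K$, its clause (d) would give (after lifting) a setpartition of a subsequence of $S$ inside $g'+H$ which, concatenated with $B$ — whose weighted sum is the full $K$-coset $\bigl(\Sum{i=1}{\dd^*(K)}w_i\bigr)g'+K$ — and padded with $\dd^*(H)-\dd^*(K)-\dd^*(\bar H)\ge 0$ further single terms of $S$ from $g'+H$, would be a $\dd^*(H)$-setpartition of a subsequence of $S$ inside $g'+H$ with weighted sum (under $w_1,\dots,w_{\dd^*(H)}$) a full $H$-coset — using repeatedly that a full coset of a subgroup plus any subset of a single coset of that subgroup is again a full coset, and that the $w_i$ are units mod $\exp(G)$; together with the matching count of leftover terms in $g'+H$ (supplied by the structure over $\bar G$, since $e_H\le|G/H|-2$), this would exhibit $H\supsetneq K$ as a subgroup satisfying the hypothesis, contradicting maximality. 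Hence alternative (i) holds over $\bar G$: an $n''$-setpartition $\bar A_1,\dots,\bar A_{n''}$ of a length-$(|S'|-|T|)$ subsequence $\bar S''\t\phi(T^{-1}S)$ with $\bigl|\Sum{j=1}{n''}w_{\dd^*(K)+j}\cdot\bar A_j\bigr|\ge\min\{|\bar G|,\,(|S'|-|T|)-n''+1\}\ge\min\{|\bar G|,\,e+1\}$. Lifting to $A_{\dd^*(K)+j}\t T^{-1}S$ with $\phi(A_{\dd^*(K)+j})=\bar A_j$, setting $A_i=B_i$ for $i\le\dd^*(K)$ and $S''=T\cdot(A_{\dd^*(K)+1}\cdots A_n)$ (so $|S''|=|S'|$), the sum $\Sum{i=1}{n}w_i\cdot A_i$ is $K$-periodic of size $|K|\cdot\bigl|\Sum{j=1}{n''}w_{\dd^*(K)+j}\cdot\bar A_j\bigr|\ge|K|\min\{|\bar G|,\,e+1\}$. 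This gives clause (c) of Theorem~\ref{thm-ccd-cor-davstar}(ii) with $H=K$: when the quotient sumset is $<|\bar G|$ the bound $e\le|G/K|-2$ drops out, and otherwise $\Sum{i=1}{n}w_i\cdot A_i=G$ and $e\le|G/K|-2$ is forced by a further appeal to maximality (if $e\ge|G/K|-1$ one shows $\supp(S)$ lies in a single coset of a subgroup strictly containing $K$). The clauses (a), (b), (d) with $H=K$ then hold by construction — (d) from $B$, and (a), (b) after a harmless rearrangement of $\bar A_1,\dots,\bar A_{n''}$ confining its at most $e$ terms outside $\phi(g')$ to the first $\dd^*(\bar G)$ parts, so that $A_i\subseteq g'+K$ for $i\le\dd^*(K)$ and for $i>\dd^*(K)+\dd^*(\bar G)$, every $A_i$ meets $g'+K$, and $\supp(S{S''}^{-1})\subseteq g'+K$.

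\emph{Main obstacle.} The crux is the recurring use of the maximality of $K$ together with the attendant counting: whenever the quotient argument over $\bar G$ produces ``too much'' — structure relative to an intermediate subgroup $H$, or the whole of $\bar G$ with $e$ large — one must turn this into a \emph{genuine} $\dd^*(H)$-setpartition of a coset-subsequence of $S$, of the \emph{exact} required length (obtained through a padding step whose feasibility must be checked against the term-count) and with weighted sum a full $H$-coset, so that a strictly larger subgroup really does satisfy the theorem's hypothesis. Making the lengths, heights and leftover-term counts cohere through these reductions — in particular absorbing the discrepancy $\dd^*(H)-\dd^*(K)-\dd^*(G/K)\ge 0$ and the correction by $e_H$, and carrying out the final rearrangement for clauses (a), (b) — is the delicate bookkeeping, and is precisely the point at which the extended weighted setpartition theorem is invoked.
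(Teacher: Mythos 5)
Your overall strategy --- pass to $\bar G=G/K$, apply Theorem \ref{thm-ccd-cor-davstar} there (legitimately, via the joint induction on the number of prime factors of $|G|$), use the maximality of $K$ to exclude both the structured alternative and full coverage of $\bar G$, and then lift --- is the same as the paper's, and your treatment of the case $K=G$ is essentially the paper's greedy extension argument. But the parameters you feed into the quotient application create a genuine gap. You invoke Theorem \ref{thm-ccd-cor-davstar} over $\bar G$ with $n''=n-\dd^*(K)$ parts and a subsequence of $\phi(T^{-1}S)$ of length $|S'|-|T|$ and height at most $n''$ in the role of $S'$. Such a subsequence exists only when $\Summ{\bar g}\min\{\vp_{\bar g}(\phi(T^{-1}S)),\,n''\}\geq |S'|-|T|$, and this fails routinely: every term of $T^{-1}S$ in $g'+K$ beyond the guaranteed $n''+|S|-|S'|$ inflates $|S'|-|T|$ without increasing the capped sum, so the obstruction is not confined to your ``degenerate subcase'' where essentially all of $S$ lies in $g'+K$. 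Moreover, in that fallback you only produce the single coset $\left(\Sum{i=1}{n}w_i\right)g'+K$, of size $|K|$, which verifies clause (c) of Theorem \ref{thm-ccd-cor-davstar}(ii) only when $e=0$; for $e\geq 1$ you still owe $|\Sum{i=1}{n}w_i\cdot A_i|\geq (e+1)|K|$. The paper avoids all of this with a different choice of parameters: it first disposes of $\h(\phi_K(S_0))\geq \dd^*(G/K)$ by contradicting maximality via $L=K+\langle g\rangle$, and then applies the quotient theorem with only $\dd^*(G/K)$ parts to $\phi_K(R)0^{|G/K|-1}$, where $R$ consists of all $e$ terms outside $K$ together with exactly $\dd^*(G/K)$ terms from $K$, so that the height hypothesis holds automatically and $|S'|-n+1=e+1$ in the quotient.

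A second gap occurs in your exclusion of alternative (ii) over $\bar G$. If it holds with subgroup $\bar H=H/K$ and dominant coset $\bar g+\bar H$, your contradiction requires concatenating $B$ (which lives in $g'+K$) with the lifted quotient setpartition (which lives in a lift of $\bar g+\bar H$) into a $\dd^*(H)$-setpartition of a subsequence of a \emph{single} $H$-coset; this needs $\bar g\in \bar H$, which you assert (``inside $g'+H$'') but do not justify, and which is false in general because the number of zeros in $\phi(T^{-1}S)$ need not exceed $|\bar G/\bar H|-2$ (take $\bar G\cong C_2^k$ with $\bar H\cong C_2$ and $k$ large). The $0^{|G/K|-1}$ padding in the paper's quotient sequence is precisely the device that forces $\bar g=0$ through clause (ii)(c). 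These two points, together with the unchecked feasibility of your final rearrangement for clauses (a) and (b) (the paper gets clause (a) for free because $\vp_0(\phi_K(R))$ equals the number of middle parts exactly), are not deferred bookkeeping; they are where the proof actually lives.
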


\smallskip

Theorem \ref{thm-ccd-cor-davstar} allows the result to applied when $n\geq \dd^*(G)$, rather than $n\geq \frac{|G|}{p}-1$ (as in the original corollary), where $p$ is the smallest prime divisor of $|G|$ (note, for non-cyclic groups, the number $\dd^*(G)$ is generally much smaller than $\frac{|G|}{p}-1$), and contains similar improvements of bounds present in (ii)(b). However, the bound present in (ii)(c) remains unaltered, and improvements here would likely be more difficult. Theorem \ref{thm-ccd-bonus} will be used to prove Theorem \ref{thm-ccd-cor-davstar}, and also gives a way to force Theorem \ref{thm-ccd-cor-davstar}(ii) to hold.

As a second consequence of Theorems \ref{thm-ccd-cor-davstar} and \ref{thm-ccd-bonus}, we prove the following variation on Theorem \ref{thm-conj-Hamidoune}, which extends Hamidoune's result from \cite{hamweightsrelprime} by showing that it is only necessary to have at least $\dd^*(G)$ of the weights relatively prime to $\exp(G)$.

\smallskip

\begin{corollary}\label{thm-conj-Hamidoune-var}
Let $G$ be a finite, nontrivial abelian group, and let $S\in \F(G)$ and $W\in \Fc(\Z)$ with $|S|\geq |W|+|G|-1$ and $\sigma(W)\equiv 0\mod \exp(G)$. Suppose $\h(S) \leq |W|$ and, for some $W'|W$ with $|W'|=t$, $\gcd(w_i,\exp(G))=1$ for all $w_i|{W'}^{-1}W$. If also $|W|\geq
\dd^*(G)+t$, then $\Sigma_{|W|}(W,S)$ contains a nontrivial subgroup.
\end{corollary}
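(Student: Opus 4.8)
Write $n=|W|$, $m=|G|$, and reorder $W$ as $W=W_1W_2$ with $W_1=w_1\cdots w_{n_0}$ consisting of weights coprime to $\exp(G)$, where $n_0:=n-t\geq\dd^*(G)$; note $\h(S)\leq n<n+m-1\leq|S|$ forces $|\supp(S)|\geq 2$. The plan is to feed $W_1$ into Theorem \ref{thm-ccd-cor-davstar}, reserve $t$ terms of $S$ for $W_2$, and exploit $\sigma(W)\equiv 0\bmod\exp(G)$ to push the completed sum into a subgroup.

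First suppose $\h(S)\leq n_0$. Apply Theorem \ref{thm-ccd-cor-davstar} to $G$, to the weight sequence $W_1$ (so its ``$n$'' is $n_0\geq\dd^*(G)$), to the sequence $S$, and to a subsequence $S'|S$ with $|S'|=n_0+m-1$ (which exists and satisfies $\h(S')\leq\h(S)\leq n_0\leq|S'|$). This yields $S''|S$ with $|S''|=n_0+m-1$ carrying an $n_0$-setpartition $A=A_1,\ldots,A_{n_0}$. In case (i) we get $|\Sum{i=1}{n_0}w_i\cdot A_i|\geq\min\{m,|S'|-n_0+1\}=m$, hence $\Sum{i=1}{n_0}w_i\cdot A_i=G$; as $|S{S''}^{-1}|=|S|-n_0-m+1\geq t$, completing with any $t$ terms of $S{S''}^{-1}$ gives $\Sigma_{|W|}(W,S)=G$. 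In case (ii), with subgroup $H$ and coset $g+H$, we have $\dd^*(H)\leq\dd^*(H)+\dd^*(G/H)\leq\dd^*(G)\leq n_0$, so (d) applies and makes $\Sum{i=1}{\dd^*(H)}w_i\cdot A_i$ a full $H$-coset; hence $T:=\Sum{i=1}{n_0}w_i\cdot A_i$ is a union of $H$-cosets. By (a), $\supp(S{S''}^{-1})\subseteq g+H$ and every $A_i$ meets $g+H$, so with $Y$ the set of $W_2$-weighted sums of $t$-term subsequences of $S{S''}^{-1}$ the set $\Sigma^*:=T+Y$ is $H$-periodic, is contained in $\Sigma_{|W|}(W,S)$, and contains the sum obtained from a transversal of $A$ and a leftover both taken inside $g+H$ --- which has $\phi_H$-image $\sigma(W)\phi_H(g)=0$. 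Thus $H\subseteq\Sigma^*\subseteq\Sigma_{|W|}(W,S)$.

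If instead $\h(S)>n_0$, let $S_1=SS_0^{-1}$ where $S_0$ has $\max\{0,\vp_g(S)-n_0\}$ copies of each $g$, so $\h(S_1)\leq n_0$ and $|S_1|=|S|-E$ with $E:=|S_0|$. If $E>t$ then $S$ is concentrated --- there are distinct heavy elements $g_0,g_1$ (multiplicity $>n_0$) with $\Summ{g\text{ heavy}}\vp_g(S)\geq n$, so one may use $n$ terms drawn only from heavy elements; varying how the weights split between copies of $g_0$ and another heavy element, and using that $\geq\dd^*(G)\geq\exp(G)-1\geq\ord(g_0-g_1)-1$ of the weights are units modulo $\exp(G)$ (so their subset sums cover $\Z/\ord(g_0-g_1)\Z$) while $\sigma(W)$ kills $G$, one obtains $\Sigma_{|W|}(W,S)\supseteq\langle g_0-g_1\rangle$. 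If $E\leq t$ one reruns the first argument on $S_1$ (note $|S_1|\geq n_0+m-1$), drawing the $t$ terms for $W_2$ from $S{S''}^{-1}=(S_1{S''}^{-1})S_0$; the subtlety is that $S_0$ consists of copies of the heavy elements, which need not lie in the coset $g+H$ of case (ii). When they do, nothing changes; otherwise a heavy $g_0\notin g+H$ has all $n_0$ of its copies in $S_1$ outside $g+H$, forcing $n_0\leq|G/H|-2$ and, by (c), $|T|\geq(n_0+1)|H|$, while (a) forces $\vp_{g_0}(S_1{S''}^{-1})=0$, so $\vp_{g_0}(S'')=n_0$ and every $A_i$ contains both a copy of $g_0$ and a point of $g+H$ --- and one then still produces a completion with $\phi_H$-image $0$ inside a suitable $H$-periodic subset of $\Sigma_{|W|}(W,S)$, by routing an appropriate number of these $g_0$'s (bearing unit weights from $W_1$) through the transversal. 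Here Theorem \ref{thm-ccd-bonus}, applied with $K$ a maximal subgroup admitting a $\dd^*(K)$-setpartition of a subsequence of $S$ in a $K$-coset with full-coset weighted sumset (such a $K$ exists by (d)), is used to force case (ii) to hold with a subgroup large enough --- and with few enough terms of $S$ outside the coset --- for this bookkeeping to close.

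The easy part is $\h(S)\leq n_0$, which follows at once from Theorem \ref{thm-ccd-cor-davstar} together with the $H$-periodicity forced by (d). The main obstacle is the heavy case: controlling how the discarded excess $S_0$ interacts with the coset supplied by case (ii), and correctly invoking Theorem \ref{thm-ccd-bonus} with a maximal subgroup, so that a completion in the zero coset modulo that subgroup is always available; the residual highly-concentrated configurations are dispatched by the direct subset-sum argument.
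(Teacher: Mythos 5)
Your first case ($\h(S)\leq |W|-t$) is correct, and it is a genuinely more direct route than the paper's for that range: you feed the coprime weights straight into Theorem \ref{thm-ccd-cor-davstar}, use (ii)(d) to make $\Sum{i=1}{n_0}w_i\cdot A_i$ an $H$-periodic set, and close with a transversal-plus-leftover selection inside $g+H$ whose $\phi_H$-image is $\sigma(W)\phi_H(g)=0$. That part stands. The problem is the case $\h(S)>|W|-t$, which is exactly where the paper leans on Theorem \ref{thm-wegzi} (the weighted EGZ machinery from \cite{WEGZ}) and on Theorem \ref{thm-ccd-bonus} applied with $K=\langle A\rangle$ for $A$ the set of high-multiplicity elements; you use neither, and your substitutes do not close.

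Concretely, in your subcase $E>t$ the ``vary the split between $g_0$ and $g_1$'' argument only produces a translate of a subset of $\langle g_0-g_1\rangle$ based at $\Summ{g}\alpha_g g$, where the base selection may be forced to use three or more distinct heavy elements; then the base point need not lie in $\langle g_0-g_1\rangle$ and no subgroup is obtained. For example, take $G\cong C_2^3$, $t=20$, $|W|=23$, and $S$ with all eight elements of $G$ having multiplicity $6$: then $E=24>t$, every $23$-term selection uses at least four distinct elements, and no two heavy elements have combined multiplicity $\geq 23$, so your swap family is anchored at an uncontrolled point. (Also note $|S_1|$ can drop below $n_0+|G|-1$ when $E>t$, so you cannot fall back on rerunning the first argument.) In the subcase $E\leq t$, the configuration you describe --- a heavy $g_0\notin g+H$ with a copy of $g_0$ in \emph{every} $A_i$ --- actually contradicts conclusion (ii)(b) of Theorem \ref{thm-ccd-cor-davstar}, which forces $A_i\subseteq g+H$ for $i\leq \dd^*(H)\geq 1$; the correct conclusion is that every heavy element must lie in $g+H$, so $S_0$ causes no trouble and no ``routing'' is needed. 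As written, though, you assert the problematic configuration occurs and then resolve it only by an unspecified routing step and a vague appeal to Theorem \ref{thm-ccd-bonus}, so this subcase is not actually proved either. The $E\leq t$ subcase is repairable by the observation just given; the $E>t$ subcase needs a genuinely different idea (the paper's is to show that whenever no $n$-setpartition with $t$ singleton cells exists, Theorem \ref{thm-wegzi} or Theorem \ref{thm-ccd-bonus} with $K=\langle A\rangle$ applies).
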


\smallskip

As a third consequence, we improve Theorem \ref{thm-gao-coset-condition} by relaxing the required hypothesis from $|S|\geq |G|+\D(G)-1$ to $|S|\geq |G|+\dd^*(G)$ (recall from (\ref{davconstant-triv-bounds}) that $\D(G)-1\geq \dd^*(G)$). This should be put in contrast to the fact that $\ell(G)=|G|+\D(G)-1>|G|+\dd^*(G)$ is in general possible (since $\D(G)-1>\dd^*(G)$ is possible). The methods of employing Theorems \ref{thm-ccd-cor-davstar} and \ref{thm-ccd-bonus} from these three applications should also be applicable for other zero-sum problems.

\section{On Conjecture \ref{conj-hamidoune}}

We begin by giving the two counter examples to Conjecture \ref{conj-hamidoune}.

\medskip

\textbf{Example 1. } Let $p\equiv -1\mod 4$ be a prime, let $G=\Z/p\Z$, let
$n=\frac{p-1}{2}$, let $W=1^{(n-1)/2}(-1)^{(n-1)/2}0$, and let
$S=0^n1^n2^n$. Note that $\h(S)=n=|W|$, that $|S|=3n=|W|+|G|-1$, that $\sigma(W)=0$, and that
$$\Sigma_{|W|}(W,S)=\Sum{i=1}{(n-1)/2}\{0,1,2\}-\Sum{i=1}{(n-1)/2}\{0,1,2\}=G\setminus\{\frac{p+1}{2},\,\frac{p-1}{2}\}.$$
Thus $G\nsubseteq \Sigma_{|W|}(W,S)$, which, since $|G|$ is prime, implies
$\Sigma_{|W|}(W,S)$ does not contain a nontrivial subgroup.

\medskip

\textbf{Example 2. } Let $m=2^r$, let $G=\Z/m\Z$, let $n=m-1$, let
$W=1^{(n-1)/2}(-1)^{(n-1)/2}0$, and let $S=0^n1^n$. Note that $\h(S)=n=|W|$, that
$|S|=2n=|W|+|G|-1$, that $\sigma(W)=0$, and that
$$\Sigma_{|W|}(W,S)=\Sum{i=1}{(n-1)/2}\{0,1\}-\Sum{i=1}{(n-1)/2}\{0,1\}=G\setminus\{\frac{m}{2}\}.$$
Hence, since every nontrivial subgroup of $G\cong \Z/2^r\Z$
contains the unique element of order 2, namely $\frac{m}{2}=2^{r-1}$, it follows that $\Sigma_{|W|}(W,S)$ does not
contain a nontrivial subgroup.

\medskip

For the proof of Theorem \ref{thm-conj-Hamidoune}, we will need to make use of the Kemperman critical pair theory (though an isoperimetric approach would also be viable, see e.g. \cite{plagne}). We begin by stating Kneser's Theorem  \cite{kt-asymptotic} \cite{kt} \cite{natbook} \cite{taobook}.

\smallskip

\begin{theirtheorem}[Kneser's Theorem] Let $G$ be an abelian group, and let $A_1,\ldots,A_n\subseteq G$ be finite, nonempty subsets. Then $$|\Sum{i=1}{n}\phi_H(A_i)|\geq \Sum{i=1}{n}|\phi_H(A_i)|-n+1,$$ where $H=H(\Sum{i=1}{n}A_i)$.
\end{theirtheorem}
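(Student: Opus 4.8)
\medskip

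\noindent\emph{Proof plan.} The assertion is Kneser's theorem; I would prove it by reducing to the two-summand case, which carries all the real content, and then bootstrapping to $n$ summands by an easy induction.

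First note that the displayed inequality is exactly what the following ``weak form'' yields in the quotient $G/H$: \emph{for finite nonempty $B_1,\ldots,B_n$ in an abelian group,}
$$|B_1+\cdots+B_n|\ \geq\ |B_1|+\cdots+|B_n|-(n-1)|K|,\qquad K:=H(B_1+\cdots+B_n).$$
Indeed, taking $B_i=\phi_H(A_i)$ one has $\sum_i\phi_H(A_i)=\phi_H(\sum_iA_i)$, and this sumset has trivial stabilizer in $G/H$ --- which is precisely the content of $H=H(\sum_iA_i)$ --- so $|K|=1$ and the weak form becomes the claim. The weak form, in turn, follows from the case $n=2$ by induction on $n$: with $B=B_1+\cdots+B_{n-1}$, the set $B$ is $H(B)$-periodic, hence so is $B+B_n$, so $H(B)\leq H(B+B_n)=K$ and $|H(B)|\leq|K|$; then the inductive bound $|B|\geq\sum_{i<n}|B_i|-(n-2)|H(B)|$ together with the two-summand bound $|B+B_n|\geq|B|+|B_n|-|K|$ gives $|B_1+\cdots+B_n|\geq\sum_i|B_i|-(n-2)|H(B)|-|K|\geq\sum_i|B_i|-(n-1)|K|$. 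So everything comes down to the two-summand inequality $|X+Y|\geq|X|+|Y|-|K|$ with $K=H(X+Y)$ (one in fact proves the sharper $|X+Y|\geq|X+K|+|Y+K|-|K|$, but the weaker version suffices here).

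For the two-summand statement I would run a double induction, on $|G|$ and, for fixed $G$, on $|X|+|Y|$, after translating so that $0\in X\cap Y$. If $\min\{|X|,|Y|\}=1$, say $|Y|=1$, then $X+Y$ is a translate of $X$, so $K=H(X)$ and the claim $|X+Y|\geq|X|+|Y|-|K|$ reads $|X|\geq|X|+1-|K|$, i.e.\ $|K|\geq1$. For the inductive step I would use the Dyson $e$-transform: for $e\in X-Y$, replace $(X,Y)$ by $(X_e,Y_e):=(X\cup(Y+e),\,Y\cap(X-e))$, which satisfies $X_e+Y_e\subseteq X+Y$ and $|X_e|+|Y_e|=|X|+|Y|$ (since $|X\cap(Y+e)|=|Y\cap(X-e)|$). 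The choice $e=0$ is admissible because $0\in X\cap Y$, and one has enough latitude to pick an admissible $e$ with $|Y_e|<|Y|$; whenever such an $e$ also keeps the stabilizer $H(X_e+Y_e)$ of size at most $|K|$, the inductive hypothesis applied to $(X_e,Y_e)$, combined with $X_e+Y_e\subseteq X+Y$, finishes the argument.

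The delicate point --- the step I expect to be the main obstacle --- is the ``stuck'' configuration, in which every admissible transform that strictly decreases $|Y|$ enlarges the stabilizer, so that $|X+Y|\geq|X_e+Y_e|\geq|X|+|Y|-|H(X_e+Y_e)|$ is too weak. One must then show that $X$ and $Y$ are forced to be so periodic --- with respect to $K$, or to a larger subgroup assembled from stabilizers of unions of translates of $X$ or of $Y$ --- that the inequality can be read off directly, or else exhibit a nontrivial subgroup $N\leq K$ for which quotienting by $N$ and invoking the induction in $G/N$ concludes the proof. Identifying the extremal/stuck configurations with the genuinely periodic ones, and the bookkeeping of the resulting chain of nested stabilizers, is the heart of Kneser's original argument; everything else, including the reduction above, is formal.
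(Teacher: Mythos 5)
The statement you are proving is quoted in the paper as a classical result (Kneser's theorem, with citations to Kneser's original papers and to the books of Nathanson and Tao--Vu); the paper itself offers no proof, so there is no in-paper argument to compare yours against. Your formal reductions are correct and standard: passing to $G/H$ does make the stabilizer of $\phi_H(\Sum{i=1}{n}A_i)$ trivial, and the induction collapsing $n$ summands to two via $H(B)\leq H(B+B_n)$ is sound. So everything does indeed rest on the two-summand inequality $|X+Y|\geq |X|+|Y|-|H(X+Y)|$.

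That is exactly where your proposal stops being a proof. The Dyson $e$-transform setup, the base case $|Y|=1$, and the easy inductive step when an admissible transform shrinks $|Y|$ without enlarging the stabilizer are all fine, but the ``stuck'' configuration --- where every $|Y|$-decreasing transform enlarges $H(X_e+Y_e)$ --- is precisely the content of Kneser's theorem, and you describe it (``one must then show that $X$ and $Y$ are forced to be so periodic\dots or else exhibit a nontrivial subgroup $N\leq K$\dots'') without carrying it out. This is not a routine verification one can wave at: it requires the full analysis of the extremal configurations (in modern treatments, either Kneser's original chain-of-stabilizers bookkeeping, or a separate induction on $|G/K|$ together with a careful study of the sets $X\cup (Y+e)$ for varying $e$), and naive attempts to ``read the inequality off directly'' in the stuck case fail. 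As written, the proposal reduces the theorem to its own hardest special case and leaves that case unproved; either supply the complete two-summand argument or, as the paper does, cite the two-summand Kneser theorem as known and present only the (correct) reduction.
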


\smallskip

Next we continue with the following two simple cases of Kemperman's Structure Theorem \cite{kst}. The reader is directed to \cite{quasi-periodic} \cite{PhD-Dissertation} \cite{KST+1}  \cite{lev-kemp} for more detailed exposition regarding Kemperman's critical pair theory, including the (somewhat lengthy and involved) statement of the Kemperman Structure Theorem. In what follows, a set $A\subseteq G$ is \emph{quasi-periodic} if there is a nontrivial subgroup $H$ (the \emph{quasi-period}) such that $A=A_0\cup A_1$ with $A_0$ nonempty and $H$-periodic and $A_1$ a subset of an $H$-coset.

\smallskip

\begin{lemma}\label{lemaKST}
Let $A_1, \ldots , A_n,$ be a collection of $n\geq 3$ finite
subsets in an abelian group $G$ of order $m$  with $0 \in A_i$
and $|A_i| \geq 2$ for all $i$. Moreover, suppose each $A_i$ is not quasi-periodic and
$ \langle A_i\rangle =G$. If $\Sum{i=1}{n} A_i$ is aperiodic
and
 \be\label{kemplem-hypoo}|\Sum{i=1}{n} A_i| = \Sum{i=1}{n} |A_i|-n + 1,\ee then the $A_i$ are arithmetic progressions with common difference.
\end{lemma}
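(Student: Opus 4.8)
The plan is to invoke the Kemperman Structure Theorem (KST) and run an induction on $n$, peeling off one set at a time. The point is that the hypotheses (each $A_i$ contains $0$, has size $\geq 2$, generates $G$, is not quasi-periodic, the total sumset is aperiodic, and equality holds in Kneser's bound) are exactly the conditions that KST forces into the ``arithmetic progression'' alternative once $n$ is large enough. First I would set $B = \Sum{i=2}{n} A_i$ and apply Kneser's Theorem to the pair $A_1, B$: since $\Sum{i=1}{n}A_i = A_1 + B$ is aperiodic, Kneser gives $|A_1+B|\geq |A_1|+|B|-1$, and comparing with the hypothesis (\ref{kemplem-hypoo}) together with the Kneser bound applied to $B$ alone, one deduces that $B$ is aperiodic and that equality $|B| = \Sum{i=2}{n}|A_i| - (n-1) + 1$ holds as well; thus the inductive hypothesis would apply to $A_2,\ldots,A_n$ provided those sets still meet the structural requirements (generation and non-quasi-periodicity), which needs to be checked — if some $A_i$ with $i\geq 2$ fails to generate $G$ or becomes quasi-periodic inside a smaller setting, one argues directly.

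The heart of the matter is the base case and the pairwise step: analyze a critical pair $(A_1, B)$ with $|A_1+B| = |A_1|+|B|-1$ and $A_1+B$ aperiodic, where neither $A_1$ nor $B$ is quasi-periodic and $\langle A_1\rangle = \langle B\rangle = G$. By KST, such a pair must be a pair of arithmetic progressions sharing a common difference $d$ (this is one of the KST alternatives; the ``quasi-periodic'' and ``Vosper''-type / small-set exceptional configurations are excluded precisely by our hypotheses, and the beating-$|G|-1$ or periodic cases are excluded by aperiodicity and by $|A_1|,|B|\geq 2$ with $n\geq 3$ guaranteeing the sets are not too small relative to $G$). Then I would push this common difference back down: if $A_2,\ldots,A_n$ are progressions with common difference $d'$ by induction, and $A_1, A_2+\cdots+A_n$ form a critical pair of progressions with common difference $d$, a short argument (replace $G$ by the cyclic group it is, after dividing by $d$, or directly compare the two progression structures on the overlap $A_2+\cdots+A_n$) shows $d = d'$ up to the usual normalization, so all of $A_1,\ldots,A_n$ are progressions with one common difference.

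I expect the main obstacle to be bookkeeping around the degenerate sub-cases: ensuring that when we reduce from $n$ to $n-1$ sets, the reduced collection genuinely satisfies ``$\langle A_i\rangle = G$, not quasi-periodic, total sumset aperiodic, Kneser-sharp'' — in particular, it is conceivable that $\Sum{i=2}{n}A_i$ is periodic even though $\Sum{i=1}{n}A_i$ is not, or that discarding a set drops the generated subgroup. In those situations one cannot apply the inductive hypothesis verbatim and must instead extract the arithmetic-progression structure directly from KST applied to the pair $(A_1, \Sum{i=2}{n}A_i)$ and then argue that each individual $A_i$ ($i\geq 2$), being a summand whose sum with the rest is a short arithmetic progression, is itself forced to be an arithmetic progression with the same difference (using that a sumset $X+Y$ of size $|X|+|Y|-1$ which is an arithmetic progression with difference $d$ forces both $X$ and $Y$ to be arithmetic progressions with difference $d$, by a Vosper/Freiman $3k-4$-type argument in the cyclic quotient). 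Handling $n=3$ as the explicit base case, where KST is applied twice, will require the most care.
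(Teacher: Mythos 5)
There is a genuine gap, and it sits exactly where you predicted the "most care" would be needed. Your plan hinges on applying the Kemperman Structure Theorem to the pair $(A_1,B)$ with $B=\Sum{i=2}{n}A_i$, asserting that neither member is quasi-periodic. Non-quasi-periodicity of $A_1$ is a hypothesis, but for $B$ it is neither assumed nor deducible: an aggregated sumset of $n-1\geq 2$ sets can easily be quasi-periodic even when aperiodic (e.g.\ $\{0,1,2\}\subseteq \Z/4\Z$ equals $\{0,2\}\cup\{1\}$ with quasi-period $\{0,2\}$; and in the intended conclusion $B$ is a long arithmetic progression, which is frequently quasi-periodic). Once $B$ may be quasi-periodic, KST does not return an elementary pair but a recursive quasi-periodic decomposition, and extracting the AP structure from that is the real work. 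Your inductive step could in fact be salvaged without KST at all — once $A_2,\ldots,A_n$ are known to be $d'$-progressions, $B$ is a $d'$-progression with $\ord(d')=m$, and the elementary fact that a critical aperiodic pair $|A_1+B|=|A_1|+|B|-1$ with $B$ a $d'$-progression forces $A_1$ to be a $d'$-progression finishes it — but the base case $n=3$ cannot be reduced this way, and there you have no structural information about $A_2+A_3$ and no way to invoke the elementary-pair case of KST. That unresolved base case is the gap.

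The paper takes a different decomposition that avoids the problem entirely: from the global equality $|\Sum{i=1}{n}A_i|=\Sum{i=1}{n}|A_i|-n+1$ and Kneser's Theorem applied to the $(n-1)$-set collection obtained by merging $A_j$ and $A_k$ into a single set, it first deduces $|A_j+A_k|=|A_j|+|A_k|-1$ for \emph{every} pair $j\neq k$ of the \emph{original} sets. KST is then applied only to such pairs, both members of which are non-quasi-periodic by hypothesis, so the pair is elementary; types (I) and (III) are excluded by $|A_j|,|A_k|\geq 2$ and aperiodicity, and type (IV) by $|A_j+A_k|<|\Sum{i=1}{n}A_i|<|G|$ (which uses $n\geq 3$) together with $0\in A_j$ and $\langle A_j\rangle=G$. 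That leaves type (II), and since the difference of an arithmetic progression of length between $2$ and $\ord(d)-2$ is unique up to sign and the pair $(j,k)$ was arbitrary, all the $A_i$ share a common difference. If you rework your argument to pair up the original sets rather than $A_1$ against the aggregate, the quasi-periodicity obstruction disappears.
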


\begin{proof} We provide a short proof using the formulation (including relevant notation and definitions) of Kemperman's Structure Theorem as given in \cite{kst}.

Since $\Sum{i=1}{n} A_i$ is aperiodic, it follows that $A_j+A_k$ is aperiodic for any $j\neq k$. Thus Kneser's Theorem implies $|A_j+A_k| \geq |A_j|+|A_k| -1$, and we
must have $$|A_j+A_k| = |A_j|+|A_k|-1,$$ else Kneser's Theorem would imply $$|\Sum{i=1}{n}A_i|\geq \underset{i\neq j,\,k}{\Sum{i=1}{n}}|A_i|+|A_j+A_k|-(n-1)+1\geq \Sum{i=1}{n}|A_i|-n+2,$$ contradicting (\ref{kemplem-hypoo}). Thus we can apply Kemperman's Structure Theorem to an arbitrary pair $A_j$ and $A_k$ with $j\neq k$.

Since
 $A_i$ is not quasi-periodic, for $i=j,\,k$, we conclude from the Kemperman Structure Theorem that $(A_j,A_k)$ is an elementary pair of type (I), (II), (III) or (IV). Since $|A_j|,\,|A_k|\geq 2$ and $A_j+A_k$ is aperiodic, we cannot have type (I) or (III). Since $n\geq 3$, since $|A_i|\geq 2$ for all $i$, and since $\Sum{i=1}{n}A_i$ is aperiodic (and in particular, $|\Sum{i=1}{n}A_i|<|G|)$, it follows in view of Kneser's Theorem that $|A_j+A_k|<|\Sum{i=1}{n}A_i|<|G|$. Thus, in view of $0 \in A_j$ and
$ \langle A_j\rangle =G$, it follows that we cannot have type (IV) and that $|A_j|,\,|A_k|\leq |G|-2$. Hence $(A_j,A_k)$ is an elementary pair of type (II), i.e., $A_j$ and $A_k$ are arithmetic progressions of common difference (say) $d\in G$. Note $\ord(d)=|G|$, since $\langle A_j\rangle =G$. Since the difference $d$ of an arithmetic progression $A$ is unique up to sign when $2\leq |A|\leq \ord(d)-2$, since $2\leq |A_j|,\,|A_k|\leq |G|-2$, and since $A_j$ and $A_k$ with $j\neq k$ were arbitrary, it now follows that all the $A_i$ are arithmetic progressions of common difference $d$, as desired.
\end{proof}

\smallskip

\begin{lemma}\label{lemaKST-triv-case}
Let $G$ be an abelian group and let $A,\,B\subseteq G$ be finite with $|A|\geq 2$ and $|B|=2$. If neither $A$ nor $B$ is quasi-periodic and $|A+B|=|A|+|B|-1$, then $A$ and $B$ are arithmetic progressions of common difference.\end{lemma}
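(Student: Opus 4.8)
The plan is to reduce everything to the action of a single translation. Since $|B|=2$, we may translate so that $B=\{0,d\}$ with $d\neq 0$; then $A+B=A\cup(A+d)$, and because $|A+d|=|A|$ the hypothesis $|A+B|=|A|+|B|-1=|A|+1$ is equivalent to $|A\cap(A+d)|=|A|-1$, hence to $|A\triangle(A+d)|=2$. It suffices to show $A$ is an arithmetic progression with difference $d$: then $B=\{0,d\}$ is one as well, and translating back gives the statement.

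Next I would decompose $G$ into cosets of $H=\langle d\rangle$. Translation by $d$ fixes each $H$-coset setwise, so for a coset $C$, writing $A_C:=A\cap C$, one has $(A\triangle(A+d))\cap C=A_C\triangle(A_C+d)$, whence $\sum_{C}|A_C\triangle(A_C+d)|=2$, the sum over all $H$-cosets. Identifying $C$ with $H$, which is isomorphic to $\Z$ or to $\Z/k\Z$ with $d\mapsto 1$, an elementary count of ``left'' and ``right'' endpoints of the maximal blocks of consecutive terms of $A_C$ shows that $C$ contributes $0$ when $A_C=\emptyset$ or $A_C=C$, and contributes $2t$ when $\emptyset\neq A_C\subsetneq C$, where $t\geq 1$ is the number of such maximal blocks; in particular every proper nonempty coset contributes at least $2$. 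Therefore exactly one coset $C_0$ has $A_{C_0}$ proper and nonempty, for that coset $t=1$ so $A_{C_0}$ is a single arithmetic progression with difference $d$, and every other coset is either disjoint from $A$ or entirely contained in $A$.

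Finally I would eliminate the full cosets using that $A$ is not quasi-periodic. If some coset $C_1$ satisfied $C_1\subseteq A$, then writing $A=A_0\cup A_1$ with $A_0$ the (nonempty) union of all full cosets and $A_1=A_{C_0}\subseteq C_0$, the set $A$ would be quasi-periodic with quasi-period $H\neq\{0\}$ — a contradiction. Hence there are no full cosets, so $A=A_{C_0}$ is a single arithmetic progression with difference $d$, as desired. Note this also absorbs the case $\ord(d)=2$: there $A_{C_0}$ is a single point, so $|A|\geq 2$ forces a full coset, again contradicting non-quasi-periodicity; in fact the hypothesis that $B$ is not quasi-periodic is not needed. (Alternatively, one could invoke the Kemperman Structure Theorem to say directly that $(A,B)$ must be an elementary pair of type (II), but the direct count above is shorter.)

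The only point requiring care — the ``main obstacle'' — is the per-coset bookkeeping of the contributions to $|A\triangle(A+d)|$ together with keeping the degenerate configurations straight: $\langle d\rangle$ infinite (no full cosets are even possible), $\ord(d)=2$, and $A$ contained in a single $\langle d\rangle$-coset, so that the non-quasi-periodicity of $A$ is applied exactly where it is genuinely needed.
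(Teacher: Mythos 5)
Your proof is correct, and it takes one of the two routes the paper itself offers: the paper's entire proof of this lemma is the single sentence that it ``follows immediately from the Kemperman Structure Theorem or may be taken as an easily verified observation,'' and your coset-by-coset count of $|A\triangle(A+d)|$ is precisely that easy verification, carried out in full (including the degenerate cases $\ord(d)=2$, $\langle d\rangle$ infinite, and the use of non-quasi-periodicity to exclude full $\langle d\rangle$-cosets). Your side remark that the non-quasi-periodicity of $B$ is redundant is also correct, since for $\ord(d)\geq 3$ a two-element set cannot be quasi-periodic and the case $\ord(d)=2$ is already excluded by the hypotheses on $A$.
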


\begin{proof} This follows immediately from the Kemperman Structure Theorem or may be taken as an easily verified observation.
\end{proof}

\smallskip

The following result from \cite{WEGZ} will also be used.

\smallskip

\begin{theirtheorem}\label{thm-wegzi} Let $G$ be a finite, nontrivial abelian group, let $W=w_1\cdots w_n$ be a sequence of integers with $\sigma(W)\equiv 0\mod \exp(G)$, and let $S\in \Fc(G)$ with $|S|\geq |W|+|G|-1$. Suppose $S$ has an $n$-setpartition $A=A_1,\ldots,A_n$
such that $|w_i\cdot A_i|=|A_i|$ for all $i$. Then there exists a
nontrivial subgroup $H$ of $G$ and an $n$-setpartition
$A'=A'_1,\ldots,A'_n$ of $S$ with $$H\subseteq \Sum{i=1}{n}w_i\cdot
A'_i\subseteq \Sigma_{|W|}(W,S)$$ and $|w_i\cdot A'_i|=|A'_i|$ for all $i$.
\end{theirtheorem}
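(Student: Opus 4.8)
The plan is to first observe that the bulk of the conclusion is automatic, so that the real content is to force $0$ into a suitable weighted sumset.

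\textbf{Reduction.} For any $n$-setpartition $A'=A_1',\ldots,A_n'$ of $S$ we have $\sum_{i=1}^n w_i\cdot A_i'\subseteq W\cdot S=\Sigma_{|W|}(W,S)$, so the outer containment in the statement is free. Set $\Sigma':=\sum_{i=1}^n w_i\cdot A_i'$ and $H':=H(\Sigma')$. Kneser's Theorem, in the equivalent form $|B_1+\cdots+B_n|\ge\sum_{i}|B_i+H'|-(n-1)|H'|$ applied to $B_i=w_i\cdot A_i'$ and $H'=H(\sum_iB_i)$, gives $|\Sigma'|\ge|S|-(n-1)|H'|$, since $|w_i\cdot A_i'+H'|\ge|w_i\cdot A_i'|=|A_i'|$ and $\sum_i|A_i'|=|S|$. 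If $H'$ were trivial this would force $|\Sigma'|\ge|S|-n+1\ge|G|$, hence $\Sigma'=G$, contradicting $H(\Sigma')=\{0\}$ (as $G$ is nontrivial). So $H'$ is automatically a nontrivial subgroup; and if in addition $0\in\Sigma'$, then the $H'$-periodic set $\Sigma'$ contains $0+H'=H'$, and the theorem holds with $H=H'$. Thus it remains only to produce \emph{one} $n$-setpartition $A'$ of $S$ with $|w_i\cdot A_i'|=|A_i'|$ for all $i$ and $0\in\sum_{i=1}^n w_i\cdot A_i'$.

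\textbf{Forcing the zero.} I would take $A'$, among all $n$-setpartitions of $S$ with $|w_i\cdot A_i'|=|A_i'|$, to maximize $|\Sigma'|$ (breaking ties toward sets of near-equal size). If $\Sigma'=G$ we are done; otherwise $H':=H(\Sigma')$ is proper and nontrivial. Assume for contradiction $0\notin\Sigma'$ and pass to $\bar G:=G/H'$, where $\overline{\Sigma'}=\sum_i\bar w_i\cdot\phi_{H'}(A_i')$ is aperiodic and proper in $\bar G$, $0\notin\overline{\Sigma'}$, $\sigma(W)\equiv 0\bmod\exp(\bar G)$, and $|\phi_{H'}(S)|=|S|\ge n+|\bar G|-1$. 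Maximality of $|\Sigma'|$ should translate into near-equality in Kneser's bound for $\overline{\Sigma'}$ on the relevant block of indices; Lemma~\ref{lemaKST} (together with Lemma~\ref{lemaKST-triv-case} for two-element sets) then forces the corresponding reduced sets to be arithmetic progressions with a common difference $d$. An explicit transfer of a single term of $S$ between two sets, pushing an endpoint of such a progression, will then either strictly increase $|\Sigma'|$---contradicting maximality---or slide $0$ into $\Sigma'$, using $\sigma(W)\equiv 0\bmod\exp(G)$ to make the arithmetic close up. (Alternatively, Theorem~\ref{thm-WEGZ} already supplies a length-$n$ subsequence $b_1\cdots b_n\mid S$ with $\sum_i w_ib_i=0$; it would then suffice to extend $b_1,\ldots,b_n$ to an $n$-setpartition of all of $S$ with $b_i\in A_i'$ and $|w_i\cdot A_i'|=|A_i'|$, whereupon $0\in\sum_i w_i\cdot A_i'$.)

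\textbf{Main obstacle.} The delicate point, in either route, is the simultaneous interplay of three demands: partitioning \emph{all} of $S$ (which, since $|S|\ge n+|G|-1$, forces the sets to be fairly large and leaves little slack), respecting the per-set injectivity constraints $|w_i\cdot A_i'|=|A_i'|$ (governed by the kernels of the multiplication-by-$w_i$ maps), and steering $0$ into the sumset. Carrying out the Kemperman-type case analysis this produces---handling periodic reduced sumsets, short sets, the choice of which term to transfer, and the bookkeeping that a reduced configuration lifts back to a genuine valid $n$-setpartition of $S$---is where essentially all the work lies; the hypothesis $\sigma(W)\equiv 0\bmod\exp(G)$ is indispensable throughout (it is needed already for $0\in\Sigma_{|W|}(W,S)$) and is exactly what lets the final arithmetic-progression computation close.
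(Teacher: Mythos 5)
First, a point of reference: this paper does not prove the statement at all — it is Theorem \ref{thm-wegzi}, quoted as a known result from \cite{WEGZ} — so there is no in-paper proof to compare against, and your argument must stand on its own. Your reduction is correct and is the standard opening move: for any $n$-setpartition $A'$ of $S$ the set $\Sigma'=\sum_{i=1}^{n}w_i\cdot A'_i$ lies in $\Sigma_{|W|}(W,S)$, and Kneser's Theorem together with $\sum_i|A'_i|=|S|\ge n+|G|-1$ forces $H':=H(\Sigma')$ to be nontrivial, so the whole theorem reduces to producing one valid setpartition with $0\in\Sigma'$. But that is where the entire content lies, and your proposal does not supply it. The Kemperman route is only gestured at (``should translate into'', ``will then either \ldots or \ldots''), and nowhere in it is the hypothesis $\sigma(W)\equiv 0\bmod\exp(G)$ used in a checkable way. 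The alternative route via Theorem \ref{thm-WEGZ} is logically backwards — in \cite{WEGZ} the present statement is the engine from which the weighted EGZ theorem is derived, not conversely — and even granting Theorem \ref{thm-WEGZ}, the step ``extend $b_1,\ldots,b_n$ to an $n$-setpartition of all of $S$ with $b_i\in A'_i$ and $|w_i\cdot A'_i|=|A'_i|$'' is precisely the kind of claim that requires proof: the constraints $|w_i\cdot A'_i|=|A'_i|$ are governed by the kernels of the multiplication-by-$w_i$ maps, and a prescribed transversal need not extend to a partition of all of $S$ respecting them.

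The missing idea — which is how the argument actually closes in \cite{WEGZ}, and whose trace is visible in the clause ``$N\ge 1$'' of Theorem \ref{ccd-weights} here — is a common-coset argument rather than a ``force $0$ in directly'' argument. One takes $A'$ maximizing $|\Sigma'|$ over all valid setpartitions; a swapping argument (if two sets had incomparable coset patterns modulo $H'$, transferring a term would increase $|\Sigma'|$) shows that the sets $A'_i+H'$ share a common coset $g+H'$, so one may choose $\alpha_i\in A'_i\cap(g+H')$ for every $i$. Then $\sum_{i=1}^{n}w_i\alpha_i\in\bigl(\sum_{i=1}^{n}w_i\bigr)g+H'=H'$, because $\sigma(W)\equiv 0\bmod\exp(G)$, and since $\Sigma'$ is $H'$-periodic and contains this element, $H'\subseteq\Sigma'$. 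This is simultaneously the only place the hypothesis on $\sigma(W)$ is needed and the step that replaces your (much heavier, and unexecuted) Kemperman analysis. As written, the proposal is a correct reduction followed by a genuine gap.
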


\smallskip

We now proceed with the proof of Theorem \ref{thm-conj-Hamidoune}.

\smallskip

\begin{proof} Let $m=|G|$. By considering $G$ as a $\Z/m\Z$-module (for notational convenience), we may w.l.o.g. consider $W$ as a sequence from $\Z/m\Z$, say w.l.o.g. $W=w_1\cdots w_n$, where $\ord(w_i)=m$ for $i\leq n-1$ (in view of the hypothesis $\gcd(w_i,|G|)=1$ for $i\leq n-1$). Observe that we may assume $|S|=n+m-1$ (since $n\geq 2$, so that, if $|\supp(S)|\geq 3$, then we can remove terms from $S$ until there are only $n+m-1\geq 3$ left while preserving that $|\supp(S)|\geq 3$), and that there are distinct $x,\,y\in G$ with
$x^ny^n|S$ such that $w_n(x-y)=0$, else Theorem \ref{thm-wegzi} implies the theorem (as if such is not the case, then there would exist, in view of $\h(S)\leq |W|=n$, an $n$-setpartition of $S$ satisfying the hypothesis of Theorem \ref{thm-wegzi}). Since $\sigma(W)=0$, we may w.l.o.g. by translation
assume $x=0$. If $\ord(y)<m$, then, since $w_iy\in \langle y\rangle $ and $$n-1\geq
\frac{m}{2}-1\geq \ord(y)-1=\ord(w_iy)-1,$$ for $i\leq n-1$ (in view of $\ord(w_i)=m$ for $i\leq n-1$), it would
follow in view of Kneser's Theorem that
$$\langle y\rangle=\Sum{i=1}{n-1}\{0,w_iy\}=\Sum{i=1}{n-1}w_i\cdot \{0,\,y\}+w_n\cdot
0\subseteq \Sigma_{|W|}(W,S),$$ as desired. Therefore we may assume
$\ord(y)=m$, whence w.l.o.g. $G$ is cyclic and $y=1$.
Consequently, since $\sigma(W)=0$, $w_n(x-y)=0$ and $x=0$, it follows that $w_n=0$ and
$\sigma(W')=0$, where $W':=Ww_n^{-1}$.

Since $n\geq \frac{m}{2}$, $0^n1^n|S$ and $|S|=n+m-1$, it follows
that \be\label{daffy}2n\leq |S|\leq 3n-1.\ee Hence let $A=A_1,\ldots,A_{n-1}$ be an
arbitrary $(n-1)$-setpartition of $S':=S(01)^{-1}$. Note
$\{0,\,1\}\subseteq A_i$ for all $i$, so that \be\label{bobo}0\in
\Sum{i=1}{n-1}w_i\cdot A_i+w_n\cdot 0\subseteq \Sigma_{|W|}(W,S).\ee Thus we may
assume $\Sum{i=1}{n-1}w_i\cdot A_i$ is aperiodic, else the proof is
complete. Consequently, Kneser's Theorem and $\ord(w_i)=m$ for $i\leq n-1$ imply $|\Sum{i=1}{n-1}w_i\cdot
A_i|\geq \Sum{i=1}{n-1}|A_i|-(n-1)+1=m-1$, whence
\be\label{walkthewalk}|\Sum{i=1}{n-1}w_i\cdot A_i|=
\Sum{i=1}{n-1}|A_i|-(n-1)+1=m-1,\ee else $G\subseteq
\Sum{i=1}{n-1}w_i\cdot A_i+w_n\cdot 0\subseteq\Sigma_{|W|}(W,S)$, as
desired.

\medskip

Suppose $m$ is not a prime power. Then we can choose $H,\,K\leq G$
with $|H|$ and $|K|$ distinct primes, so that $H\cap K=\{0\}$. In view
of (\ref{walkthewalk}), it follows that $\Sigma_{|W|}(W,S)$ is missing
exactly one element, which in view of (\ref{bobo}) cannot be zero.
Consequently, either $H\subseteq \Sigma_{|W|}(W,S)$ or $K\subseteq \Sigma_{|W|}(W,S)$, as desired. So we may assume $m=p^r$ for some prime $p$ and
$r\geq 1$.

\medskip

\textbf{Claim A: } If
$x(-x)|W'$, for some $x\in \Z/m\Z$, then  $|S|=2n$ or $|S|=3n-1$, else the proof is complete.

\medskip

\begin{proof}

Suppose the claim is false. Thus (\ref{daffy}) implies $2n+1\leq |S|\leq 3n-2$, so that $n\geq 3$, and it follows by the pigeonhole principle that
$|A_i|\leq 2$ for some $i$, say $i=n-1$, and that $|A_j|\geq 3$
for some $j$, say $j=n-2$, whence we may w.l.o.g. assume $x=w_{n-2}$ and
$-x=w_{n-1}$. Let $g\in A_{n-2}\setminus \{0,\,1\}$ (in view of $|A_{n-2}|=|A_j|\geq 3$). Observe
that \begin{multline}\label{longlike}\Sum{i=1}{n-2}w_i\cdot
A_i+w_{n-1}\cdot(A_{n-1}\cup \{g\})= \left(\Sum{i=1}{n-1}w_i\cdot
A_i\right)\bigcup\\ \left( \Sum{i=1}{n-3}w_i\cdot A_i+w_{n-2}\cdot
(A_{n-2}\setminus \{g
\})+w_{n-1}\cdot (A_{n-1}\cup\{g\})\right)\bigcup\left(
\Sum{i=1}{n-3}w_i\cdot A_i+w_{n-2}g+w_{n-1}g\right).\end{multline} Note that the first two terms of the right
hand side of (\ref{longlike}) are contained in $\Sigma_{|W|}(W,S)$.
Moreover,
$$w_{n-2}g+w_{n-1}g=xg+(-x)g=0=w_{n-2}\cdot 0+w_{n-1}\cdot 0\in w_{n-2}\cdot
A_{n-2}+w_{n-1}\cdot A_{n-1},$$ so that the third term of the
right hand side of (\ref{longlike}) is contained in
$\Sum{i=1}{n-1}w_i\cdot A_i+w_n\cdot 0\subseteq \Sigma_{|W|}(W,S)$ as well. Consequently,
it follows from (\ref{longlike}) that
\be\label{yarrh}\Sum{i=1}{n-2}w_i\cdot
A_i+w_{n-1}\cdot(A_{n-1}\cup \{g\})\subseteq \Sigma_{|W|}(W,S).\ee
However, since $\Sum{i=1}{n-1}w_i\cdot A_i$ is aperiodic and
$w_{n-1}g\notin w_{n-1}\cdot A_{n-1}$ (in view of $\ord(w_{n-1})=m$, $|A_{n-1}|=|A_i|=2$, and $\{0,1\}\subseteq A_i$), it follows from
Kneser's theorem that $$|\Sum{i=1}{n-2}w_i\cdot
A_i+w_{n-1}\cdot(A_{n-1}\cup \{g\})|> \Sum{i=1}{n-1}|
A_i|-(n-1)+1=m-1.$$ Thus (\ref{yarrh}) implies
that $G\subseteq \Sigma_{|W|}(W,S)$, as desired, completing the proof of Claim A.\end{proof}

\medskip

If $n=2$, then $\sigma(W')=0$ implies $w_1=0$,
contradicting $\ord(w_i)=m$ for $i\leq n-1$. Therefore we may assume
$n\geq 3$.

\medskip

Suppose $|S|=2n$ (so that $S=0^n1^n$). Since $|S|=n+m-1=2n$ and $n\geq 3$, it follows
that $m=n+1\geq 4$. In view of (\ref{walkthewalk}) and Lemmas \ref{lemaKST} and \ref{lemaKST-triv-case}, it
follows that each $w_i\cdot A_i=w_i\cdot\{0,1\}=\{0,w_i\}$ is an arithmetic
progression with common difference. Consequently,
it follows that $w_i=\pm w_j$ for all $i,\,j\leq n-1$. Since $n-1< m$, since
$\sigma(W')=0$, and since $\ord(w_i)=m$ for all $i\leq n-1$, it
follows that the $w_i$ cannot all be equal. As a result, $w_i=\pm
w_j$ for all $i,\,j\leq n-1$ implies that $w_i=\pm x$ for all $i\leq n-1$,
with $(x)(-x)|W'$ (for some $x\in \Z/m\Z$), whence $\sigma(W')=0$ further implies that
$W'=x^{(n-1)/2}(-x)^{(n-1)/2}$ with $n-1$ even. Hence, since
$m=n+1$ is a prime power, it follows that $m=2^r$, and we see that (ii) holds. So we may assume $|S|>2n$.

\medskip

Suppose $n=3$. Then $\sigma(W')=0$ implies that
$w_1=-w_2$. Thus, since $2n<|S|$ and $x(-x)|W'$, where $x=w_1$, it follows in view of
Claim A that $|S|=3n-1=8$. Since $|S|=n+m-1=m+2$, this
implies that $m=6$, contradicting that $m$ is a prime power. So we
may assume $n\geq 4$.

\medskip

In view of (\ref{daffy}), choose $A$ such that $|A_i|\in \{2,\,3\}$ for all $i$ (possible by the remarks from Section 1). If, for some $j$, there is
$g\in A_j\setminus \{0,1\}$ such that $\{x,g\}$ is a
coset of a cardinality two subgroup $H$, where $x\in \{0,1\}$,
then
$$\underset{i\neq j}{\Sum{i=1}{n-1}}w_ix+w_j\cdot
\{x,g\}+w_n\cdot 0$$ is an $H$-periodic
subset of $\Sigma_{|W|}(W,S)$ that contains $\Sum{i=1}{n-1}w_ix=\sigma(W')\cdot x=0$; thus $H\subseteq \Sigma_{|W|}(W,S)$, as desired. Therefore
we may assume otherwise, and consequently that no $A_j$ is quasi-periodic (in view of $|A_j|\leq 3$ and $\Sum{i=1}{n-1}A_i$ aperiodic).

As a result, it follows, in view of $n\geq 4$, (\ref{walkthewalk}) and Lemma \ref{lemaKST}, that the
$w_i\cdot A_i$ are all arithmetic progressions of common
difference. Thus each $A_i$ is an arithmetic progression
of length two or three that contains $\{0,1\}$. Hence, since
$n\geq 4$ and $n+m-1=|S|> 2n$, so that $$m\geq 5,$$ it follows
that each  $A_i$ is an arithmetic progression with difference $1$
or $\frac{m+1}{2}$ (which both are of order $m$), and thus each $w_i\cdot A_i$ is an arithmetic
progression with difference $w_i$ or $w_i\cdot \frac{m+1}{2}$.

Thus, since $n-1\geq 3$, it follows by the pigeonhole principle that
there is a pair $A_{j}$ and $A_{k}$, with $j\neq k$, that are arithmetic
progressions with common difference $d$, where $\ord(d)=m$. Thus
$w_{j}\cdot A_{k}$ and $w_{k}\cdot A_{k}$ are arithmetic
progression with common difference $w_{j}d=\pm
w_{k}d$, implying $w_{j}=\pm w_{k}$ (since $\ord(d)=m$). Since the indexing for the $w_i$ was arbitrary, then, by applying this
argument to all possible permutations of the indices of the $w_i$ (leaving $w_n$ fixed), we conclude that $w_i=\pm
w_j$ for all $i,\,j\leq n-1$. As in the case $|S|=2n$, we cannot have
all the $w_i$, with $i\leq n-1$, equal to each other (in view of $\sigma(W')=0$ and $n-1<m=\ord(w_i)$), whence
$W=x^{(n-1)/2}(-x)^{(n-1)/2}0$ and $n$ is odd, for some $x\in \Z/m\Z$.

Thus, from claim $A$ and $|S|>2n$, we infer that $n+m-1=|S|=3n-1$, implying $2n=m$. Hence $m$ is
even. Thus, since $m$ is a prime power, it follows that $m=2^r$,
whence $2n=m=2^r\geq 5$ implies that $n$ is even, a contradiction,
completing the proof.
\end{proof}

\section{On $\dd^*(G)$}

The main goal of this section is to prove the following pair of seemingly innocuous lemmas, which will be needed for the proof of Theorem \ref{thm-ccd-cor-davstar}. Lemma \ref{d*lemma} should be compared with the similar \cite[Proposition 5.1.11]{Alfred-book}, whose proof is much easier.

\smallskip

\begin{lemma}\label{d*lemma}If $G$ is a finite abelian group and $H\leq G$, then $$\dd^*(H)+\dd^*(G/H)\leq \dd^*(G).$$
\end{lemma}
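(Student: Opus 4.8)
The plan is to reduce to the case that $H$ is cyclic and then settle that case using the interlacing of invariant factors under a cyclic quotient.

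For the reduction, write $H\cong C_{m_1}\oplus\cdots\oplus C_{m_s}$ with $m_1\mid\cdots\mid m_s$, so that $\dd^*(H)=\sum_{j=1}^{s}(m_j-1)$, and let $\{0\}=H_0\leq H_1\leq\cdots\leq H_s=H$ be the associated chain, $H_j\cong C_{m_1}\oplus\cdots\oplus C_{m_j}$, so that each $H_j\leq G$ and $H_j/H_{j-1}\cong C_{m_j}$ is a cyclic subgroup of $G/H_{j-1}$. Granting the cyclic case, apply it to $G/H_{j-1}$ and its cyclic subgroup $H_j/H_{j-1}$: since $(G/H_{j-1})/(H_j/H_{j-1})\cong G/H_j$, this gives $\dd^*(G/H_j)\leq \dd^*(G/H_{j-1})-(m_j-1)$ for each $j$, and summing over $j$ telescopes to $\dd^*(G/H)\leq \dd^*(G)-\sum_{j=1}^{s}(m_j-1)=\dd^*(G)-\dd^*(H)$, which is the assertion. (The trivial cases $H=\{0\}$ and $H=G$ are then covered as well.)

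For the cyclic case, let $g\in G$ with $\ord(g)=d$, so that $\langle g\rangle\cong C_d$ and it suffices to show $\dd^*(G/\langle g\rangle)\leq \dd^*(G)-(d-1)$. Let $n_1\mid\cdots\mid n_r$ be the invariant factors of $G$ and $n'_1\mid\cdots\mid n'_r$ those of $G/\langle g\rangle$, the latter padded with leading $1$'s so both have length $r=\rr(G)$, and set $n_0:=1$. Here I would invoke the classical fact that invariant factors interlace under a cyclic quotient, namely $n_{i-1}\mid n'_i\mid n_i$ for all $i$; this follows from the behaviour of the Smith Normal Form when one generator is adjoined to the relation lattice (or, dually, from the corresponding statement for a subgroup of index $d$). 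Put $\gamma_i:=n_i/n'_i\geq 1$ and $\beta_i:=n'_i/n_{i-1}\geq 1$, both positive integers, so that $n_i=\prod_{j\leq i}\beta_j\gamma_j$ and $\prod_{i=1}^{r}\gamma_i=|G|/|G/\langle g\rangle|=d$. Since $n_i-n'_i=n_{i-1}\beta_i(\gamma_i-1)$, using $\beta_j\geq 1$ and the telescoping identity $\bigl(\prod_{j<i}\gamma_j\bigr)(\gamma_i-1)=\prod_{j\leq i}\gamma_j-\prod_{j<i}\gamma_j$ one obtains
\[
\dd^*(G)-\dd^*(G/\langle g\rangle)=\sum_{i=1}^{r}n_{i-1}\beta_i(\gamma_i-1)\;\geq\;\sum_{i=1}^{r}\Bigl(\prod_{j<i}\gamma_j\Bigr)(\gamma_i-1)=\prod_{j=1}^{r}\gamma_j-1=d-1,
\]
as required.

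The heart of the argument is this cyclic case, and the only ingredient that is not a short computation is the interlacing property $n_{i-1}\mid n'_i\mid n_i$ (together with $\prod_i n_i/n'_i=d$), which is standard. It is worth stressing why the cyclic case cannot be reduced to $d$ a prime or a prime power: for a direct sum $\dd^*$ is only superadditive (merging coprime cyclic factors strictly increases it), so splitting $C_d$ into coprime pieces and adding the resulting inequalities would lose too much. Thus the cyclic case must be handled for all $d$ at once, which is exactly what the telescoping estimate above accomplishes.
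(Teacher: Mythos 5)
Your proof is correct, but it follows a genuinely different route from the paper's. The paper inducts on $\rr(G)$: writing the invariant factors of $G$, $H$ and $G/H$ as $m_i$, $m'_i$, $m''_i$, it shows via self-duality (Proposition \ref{lem-dual}) and alignment (Proposition \ref{lem-align}) that $m'_i\mid m_i$ and $m''_i\mid m_i$; when all these divisibilities are strict the inequality follows termwise from $m'_i+m''_i\leq m_i$, and otherwise some $m'_s=m_s$ (w.l.o.g.), in which case Lemma \ref{lem-key} --- the technical heart of the paper's argument, proved with independent elements and total rank --- extracts a cyclic group $K\cong C_{m_s}$ that is a common direct summand of $H$ and $G$, so that the induction passes to $G_0$, $H_0$ with $G=K\oplus G_0$, $H=K\oplus H_0$, $H_0\leq G_0$. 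You instead filter $H$ by a chain with cyclic quotients and telescope, reducing the whole lemma to the single estimate $\dd^*(G/\langle g\rangle)\leq \dd^*(G)-(\ord(g)-1)$, which you settle by the interlacing $n_{i-1}\mid n'_i\mid n_i$ of invariant factors under a cyclic quotient together with a telescoping-product computation (which I have checked and which is correct, including the reduction step). The trade-off is that your one external input --- interlacing of invariant factors when a single generator is adjoined to the relation lattice --- is a genuine classical theorem of roughly the same depth as Lemma \ref{lem-key}, and in a self-contained write-up it should be cited precisely or proved: $n'_i\mid n_i$ holds because $G/\langle g\rangle$ is a quotient of $G$, while $n_{i-1}\mid n'_i$ holds because, for each prime power $p^k$, the number of invariant factors divisible by $p^k$ equals the rank of $p^{k-1}G/p^kG$, and this rank drops by at most one in passing to $G/\langle g\rangle$ since the defect is a quotient of the cyclic group $\langle g\rangle$. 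In exchange, your argument avoids the duality step and the common-summand lemma entirely and isolates the cyclic case as the only nontrivial one. One minor imprecision: the dual of your interlacing statement concerns a subgroup whose quotient is \emph{cyclic}, not an arbitrary subgroup of index $d$; this does not affect your argument, as you only use the quotient form.
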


\smallskip

\begin{lemma}\label{lem-split} Let $G$ be a finite abelian group, let $A\subseteq G$ be finite with $|A|\geq 2$, let $H=\langle -a_0+A\rangle$, where $a_0\in A$, and let $W=w_1\cdots w_{\dd^*(H)}$ be a sequence of integers relatively prime to $\exp(H)$. Then $$\Sum{i=1}{\dd^*(H)}w_i\cdot A=\left(\Sum{i=1}{\dd^*(H)}w_i\right)a_0+H.$$
\end{lemma}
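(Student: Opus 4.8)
The plan is to translate the statement into the quotient-free setting by subtracting $a_0$, and then invoke Lemma \ref{d*lemma} inductively through a subgroup chain. First I would set $A_0 := -a_0 + A$, so that $0 \in A_0$, $|A_0| = |A| \geq 2$, and $\langle A_0 \rangle = H$. Since the $w_i$ are relatively prime to $\exp(H)$, each $w_i$ acts as an automorphism on $H$, so $w_i \cdot A_0 \subseteq H$ and $0 \in w_i \cdot A_0$; moreover $\Sum{i=1}{\dd^*(H)} w_i \cdot A = \left(\Sum{i=1}{\dd^*(H)} w_i\right)a_0 + \Sum{i=1}{\dd^*(H)} w_i \cdot A_0$, so it suffices to prove $\Sum{i=1}{\dd^*(H)} w_i \cdot A_0 = H$. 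Thus the whole problem reduces to: if $A_0 \subseteq H$ is finite with $0 \in A_0$, $|A_0| \geq 2$, $\langle A_0 \rangle = H$, and $w_1,\ldots,w_{\dd^*(H)}$ are prime to $\exp(H)$, then the $\dd^*(H)$-fold weighted sumset of $A_0$ with itself is all of $H$.

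Next I would prove this reduced statement by induction on $|H|$ (or on $\rr(H)$), using a Kneser-type/subgroup-chain argument. If $H$ is cyclic of order $n$, then $\dd^*(H) = n-1$, and $A_0 \supseteq \{0, a\}$ for some generator-candidate $a$; since $w_i a$ generates $H$ for each $i$ (as $\gcd(w_i, n) = 1$ forces $\ord(w_i a) = \ord(a)$, and iterating over all $i$ one may assume $\langle a \rangle = H$ after noting $A_0$ generates $H$), Kneser's Theorem applied to the $n-1$ sets $\{0, w_i a\}$ gives that their sumset has size at least $(n-1)\cdot 1 + (n-1) - (n-1) + 1 = n$ unless it is periodic — and the only way a subgroup-bounded sumset of $n-1$ two-element sets containing $0$ reaches size $n = |H|$ is to equal $H$. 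For the inductive step with $H$ non-cyclic, I would pick a subgroup $H' < H$ with $H/H'$ cyclic of order $n_r = \exp(H)$ (using the structure $H \cong C_{n_1} \oplus \cdots \oplus C_{n_r}$), note $\dd^*(H) = \dd^*(H') + \dd^*(H/H') = \dd^*(H') + (n_r - 1)$ by Lemma \ref{d*lemma} (here equality, not just inequality, because of the specific chain chosen — this is exactly the point where Lemma \ref{d*lemma}'s proof technique is relevant), use the first $\dd^*(H/H')$ weighted copies projected to $H/H'$ to cover all of $H/H'$, use the remaining $\dd^*(H')$ weighted copies (those indices $i$ with $0 \in w_i \cdot A_0$ and the projection landing in $H'$) together with the induction hypothesis to recover a full coset of $H'$, and then stitch these together using $0 \in w_i \cdot A_0$ for every $i$ so that partial sums can absorb the missing pieces.

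The main obstacle I anticipate is handling the interaction between the quotient and the subgroup cleanly: after projecting $A_0$ to $H/H'$ one does not automatically get that the projection generates $H/H'$ or has $\geq 2$ elements, and one must carefully split the index set $[1, \dd^*(H)]$ into a block governing $H/H'$ and a block governing $H'$ while ensuring that the sets $A_0 \cap (\text{coset of } H')$ used in the $H'$-part actually generate $H'$ and have size at least $2$ — which may fail, forcing a sub-induction or a direct appeal to Kneser's Theorem on possibly-degenerate (singleton) sets, absorbed by the standing hypothesis $0 \in w_i \cdot A_0$. The bookkeeping that makes Lemma \ref{d*lemma} an \emph{equality} along the chosen chain (so that the index counts match exactly with no slack) is the delicate combinatorial heart, and it is presumably why the authors flag that the proof of Lemma \ref{d*lemma} is "much easier" in the analogous \cite[Proposition 5.1.11]{Alfred-book} but needed here in a sharper form; I would expect to need a Kneser-based induction essentially identical in spirit to the one proving Lemma \ref{d*lemma} itself.
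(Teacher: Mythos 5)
Your reduction to $a_0=0$ and the general shape (subgroup chain, Kneser, Lemma \ref{d*lemma}) are on the right track, but the induction you propose does not close, and you have flagged the fatal point yourself without resolving it. Your top-down step — fix $H'<H$ with $H/H'$ cyclic of order $\exp(H)$, cover the quotient first, then recover a coset of $H'$ by induction — requires a subset of $A_0$ lying in a single $H'$-coset whose difference set generates $H'$, and nothing guarantees this exists (e.g.\ $A_0=\{0,e_2,2e_2+e_1\}$ in $C_3\oplus C_3$ with $H'=\langle e_1\rangle$ meets every $H'$-coset in at most one point; even when two points share a coset, their difference only generates a cyclic subgroup of $H'$). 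Your cyclic base case is also broken as written: $\langle A_0\rangle=H$ cyclic does not let you ``assume'' some single $a\in A_0$ generates $H$ (take $A_0=\{0,2,3\}\subseteq C_6$), so Kneser applied to the sets $\{0,w_ia\}$ for one fixed $a$ cannot reach all of $H$. Finally, your instinct that Lemma \ref{d*lemma} is needed as an \emph{equality} along the chain is misplaced: slack is harmless, because $0$ lies in every translated set, so surplus indices cannot shrink the sumset.

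The paper's proof inverts your decomposition and thereby dodges both problems. It takes $K=\langle B\rangle$ \emph{maximal} among subgroups generated by some $B\subseteq A$ with $a_0\in B$ and $|\Sum{i=1}{\dd^*(K)}w_i\cdot B|=|K|$ (starting from $K=\{0\}$ if necessary), then adjoins a \emph{single} new element $g\in A\setminus B$ with $K'=\langle B\cup\{g\}\rangle>K$. The quotient $K'/K\cong \langle g\rangle/(K\cap\langle g\rangle)$ is then automatically cyclic and generated by $\phi_K(g)$, so Lemma \ref{d*lemma} gives $\dd^*(K')-\dd^*(K)\geq \dd^*(K'/K)=|K'/K|-1$ spare indices, and Kneser applied to the two-element sets $\phi_K(w_i\cdot\{0,g\})$ (each of full order since $\gcd(w_i,\exp(H))=1$) covers $K'/K$, contradicting maximality. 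This bottom-up, one-element-at-a-time extension is precisely the missing idea: it manufactures the cyclic quotient generated by an element of $A$ that your fixed choice of $H'$ cannot provide. As it stands, your proposal has a genuine gap at its inductive step.
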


\smallskip

We first gather some basic results from algebra. Proposition \ref{lem-dual} is easily proved from the machinery of dual groups, and Proposition \ref{lem-align} from the notion and basic properties of independent elements.

\smallskip

\begin{proposition}\label{lem-dual} Let $G$ be a finite abelian group and $H\leq G$. Then there exists $K\leq G$ such that $K\cong G/H$ and $G/K\cong H$.
\end{proposition}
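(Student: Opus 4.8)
The plan is to invoke duality for finite abelian groups. Write $\widehat{G} := \hom(G,\Q/\Z)$ for the character group of $G$, and recall three standard facts. First, every finite abelian group $A$ satisfies $A \cong \widehat{A}$ (reduce to the cyclic case via $\widehat{A\oplus B}\cong\widehat A\oplus\widehat B$, and note $\widehat{C_n}\cong C_n$). Second, for a subgroup $H\leq G$ the annihilator $H^{\perp}:=\{\chi\in\widehat G:\chi(H)=0\}$ is canonically isomorphic to $\widehat{G/H}$, since a character of $G$ killing $H$ is precisely the pullback along $\phi_H$ of a character of $G/H$. Third, the restriction homomorphism $\widehat G\to\widehat H$, $\chi\mapsto\chi|_H$, has kernel exactly $H^{\perp}$ and is surjective — surjectivity because $\Q/\Z$ is a divisible, hence injective, $\Z$-module, so every character of $H$ extends to $G$ — and therefore $\widehat G/H^{\perp}\cong\widehat H$.

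Granting these, the proposition follows at once. Fix an isomorphism $\theta:G\xrightarrow{\sim}\widehat G$ (by the first fact) and set $K:=\theta^{-1}(H^{\perp})\leq G$. Then $\theta$ restricts to an isomorphism $K\xrightarrow{\sim}H^{\perp}$, so by the second fact $K\cong H^{\perp}\cong\widehat{G/H}$, and since $G/H$ is finite abelian the first fact gives $\widehat{G/H}\cong G/H$; hence $K\cong G/H$. Likewise $\theta$ induces an isomorphism $G/K\xrightarrow{\sim}\widehat G/\theta(K)=\widehat G/H^{\perp}$, so by the third fact $G/K\cong\widehat G/H^{\perp}\cong\widehat H$, and the first fact applied to $H$ gives $\widehat H\cong H$; hence $G/K\cong H$, as claimed.

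There is no real obstacle here: the content is simply that, up to isomorphism, passing to a subgroup and passing to the complementary quotient are exchanged by duality. The only points that deserve (routine) care are the two natural identifications $H^{\perp}\cong\widehat{G/H}$ and $\widehat G/H^{\perp}\cong\widehat H$, and in particular the surjectivity of the restriction map $\widehat G\to\widehat H$, which rests on the injectivity of $\Q/\Z$ as a $\Z$-module. If one prefers to avoid dual groups entirely, an alternative is to put the inclusion matrix of $H\hookrightarrow G$ into Smith normal form and read off a compatible pair of generating sets, but the character-group argument above is shorter and is the one indicated in the text.
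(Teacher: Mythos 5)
Your proof is correct and follows essentially the same route as the paper, which simply cites Lang for the self-duality of finite abelian groups and for the identifications $H^{\perp}\cong\widehat{G/H}$ and $\widehat G/H^{\perp}\cong\widehat H$; you have merely written out the details behind those citations.
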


\begin{proof}
Since finite abelian groups are self-dual \cite[Theorem I.9.1]{lang}, this follows from \cite[Corollory I.9.3]{lang}.
\end{proof}

\smallskip

\begin{proposition}\label{lem-align} Let $G$ be a finite abelian group, say $G\cong \bigoplus_{i=1}^{r}C_{m_i}\cong \bigoplus_{i=1}^{l}\left(\bigoplus_{j=1}^{r}C_{p_i^{k_{i,j}}}\right)$, with $1<m_1|\ldots |m_r$, each $p_i$ a distinct prime, and $1\leq k_{i,1}\leq \ldots \leq k_{i,r}$. If $H\leq G$, then $$H\cong \bigoplus_{i=1}^{r}C_{m'_i}\cong \bigoplus_{i=1}^{l}\left(\bigoplus_{j=1}^{r}C_{p_i^{k'_{i,j}}}\right),$$ with $1\leq m'_1|\ldots |m'_r$ and $m'_i|m_i$ and $1\leq k'_{i,1}\leq \ldots\leq  k'_{i,r}$ and $k'_{i,j}\leq k_{i,j}$, for all $i$ and $j$. Moreover, if $m'_s=m_s$ for some $s$, then $k'_{i,s}=k_{i,s}$ for all $i$.
\end{proposition}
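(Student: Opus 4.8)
\textbf{Proof proposal for Proposition \ref{lem-align}.}

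The plan is to reduce everything to the case of a finite abelian $p$-group and then invoke the structure theory of subgroups of such groups. First I would observe that since $G$ decomposes as a direct sum of its Sylow $p_i$-subgroups $G_{p_i}$, and any subgroup $H$ decomposes compatibly as $H=\bigoplus_i H_{p_i}$ with $H_{p_i}\leq G_{p_i}$, it suffices to prove the statement for each prime $p=p_i$ separately; the invariant-factor decomposition $m'_i=\prod_i p_i^{k'_{i,i}}$ (indexed appropriately) and the divisibility $m'_i|m_i$ then follow termwise from the $p$-primary statements $k'_{i,j}\leq k_{i,j}$ together with the monotonicity $k'_{i,1}\leq\ldots\leq k'_{i,r}$ (which is just the definition of how one reassembles invariant factors from primary ones). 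So the core is: if $P$ is a finite abelian $p$-group with type $(p^{a_1},\ldots,p^{a_r})$, $a_1\leq\ldots\leq a_r$ (padding with zeros so that the number of summands is fixed at $r$), and $Q\leq P$ has type $(p^{b_1},\ldots,p^{b_r})$ with $b_1\leq\ldots\leq b_r$, then $b_j\leq a_j$ for all $j$.

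For this $p$-group statement, the clean tool is the theory of Ulm–Kaplansky / elementary divisors, or more elementarily the chain of subgroups $P\supseteq pP\supseteq p^2P\supseteq\cdots$. The key numerical fact is that $a_j$ (with the convention that we list the parts in nondecreasing order and count from the top) is determined by $\dim_{\mathbb F_p}(p^{t}P/p^{t+1}P)$ as $t$ ranges over $\mathbb Z_{\geq 0}$: specifically, the number of $j$ with $a_j\geq t+1$ equals $\dim_{\mathbb F_p} p^tP/p^{t+1}P$. Since $Q\leq P$ gives $p^tQ\leq p^tP$ and the map $p^tQ/p^{t+1}Q\to p^tP/p^{t+1}P$ induced by inclusion is injective (if $x\in p^tQ$ lies in $p^{t+1}P$, one shows using $x\in p^tQ$ that in fact $x\in p^{t+1}Q$ — this is the one spot requiring a short argument, handled via the fact that $p^tQ$ is generated by $p^t$ times a basis of $Q$ and comparing orders), we get $\dim p^tQ/p^{t+1}Q\leq \dim p^tP/p^{t+1}P$ for every $t$. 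Translating back through the counting formula, for every $t$ the number of parts of $Q$ that are $\geq p^{t+1}$ is at most the number of parts of $P$ that are $\geq p^{t+1}$; since both lists are sorted in nondecreasing order and have the same length $r$, this is exactly the statement that $b_j\leq a_j$ for all $j$.

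Finally, for the ``moreover'' clause: suppose $m'_s=m_s$ for some $s$. Fix a prime $p=p_i$ and let $a=k_{i,s}$, $b=k'_{i,s}$; then $m'_s=m_s$ forces $b=a$. I want $k'_{i,s}=k_{i,s}$ for \emph{all} $i$ — which is what we just wrote — so actually the claim to verify is just consistency of the reassembly: the hypothesis $m'_s=m_s$ says $p_i^{k'_{i,s}}=p_i^{k_{i,s}}$ for every $i$, i.e.\ $k'_{i,s}=k_{i,s}$, directly from unique factorization of $m_s=m'_s$ into prime powers using that the $s$-th invariant factor's $p_i$-part is exactly $p_i^{k_{i,s}}$ (and likewise for $H$). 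So the ``moreover'' is essentially immediate once the indexing conventions are pinned down; the only care needed is to confirm that the padded primary exponents $k_{i,s}$ are indeed the exponents appearing in $m_s=\prod_i p_i^{k_{i,s}}$, which is the standard dictionary between invariant-factor and elementary-divisor decompositions.

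The main obstacle is not conceptual but bookkeeping: one must set up the zero-padding of the primary exponents $k_{i,j}$ consistently (so that every prime contributes the same number $r$ of — possibly trivial — cyclic summands, matching the $r$ invariant factors), and then be careful that the sorted-list comparison $b_j\leq a_j$ for all $j$ is genuinely equivalent to the ``for each threshold, fewer large parts'' statement; this is a standard majorization-type equivalence but deserves an explicit sentence. The injectivity of $p^tQ/p^{t+1}Q\hookrightarrow p^tP/p^{t+1}P$ is the only place a genuine (if short) group-theoretic argument is needed, and I would present it by choosing a basis of $Q$ and noting $p^tQ$ has a basis consisting of those $p^t$-multiples that are nonzero, with $p^{t+1}Q$ spanned by their $p$-multiples, so an element of $p^tQ\cap p^{t+1}P$, being in $Q$, already lies in $p^{t+1}Q$ by order considerations.
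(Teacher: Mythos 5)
Your reduction to $p$-groups and your handling of the ``moreover'' clause are fine, and your overall strategy (compare, for each threshold $t$, the number of cyclic summands of order at least $p^{t+1}$ via the filtration $P\supseteq pP\supseteq p^2P\supseteq\cdots$) can be made to work. However, the one step you yourself flag as ``the only place a genuine group-theoretic argument is needed'' is false as stated: the map $p^tQ/p^{t+1}Q\to p^tP/p^{t+1}P$ induced by inclusion need \emph{not} be injective. Take $P=\Z/p^2\Z$, $Q=pP$ and $t=0$; then $p\in Q$ lies in $pP$ but not in $pQ=\{0\}$, so the induced map $Q/pQ\to P/pP$ is the zero map between two one-dimensional spaces. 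Your proposed justification (``an element of $p^tQ\cap p^{t+1}P$, being in $Q$, already lies in $p^{t+1}Q$ by order considerations'') therefore cannot be repaired as an injectivity claim. The dimension inequality you actually need, $\dim p^tQ/p^{t+1}Q\leq \dim p^tP/p^{t+1}P$, is nevertheless true; the standard fix is to replace the quotients by torsion subgroups: for any finite abelian $p$-group $A$ one has $\dim_{\Z/p\Z} \left(p^tA/p^{t+1}A\right)=\dim_{\Z/p\Z} (p^tA)[p]$, and $(p^tQ)[p]\subseteq (p^tP)[p]$ is an honest inclusion of elementary abelian groups, which gives the inequality at once.

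With that repair your argument is correct and is close in spirit to, though organized differently from, the paper's proof. The paper also reduces to $p$-groups, but argues by contradiction: a supposed violation $k'_{1,j}>k_{1,j}$ yields $r-j+1$ independent elements of $H$ of order at least $p_1^{k_{1,j}+1}$, which after multiplication by $p_1^{k_{1,j}}$ become $r-j+1$ independent nonzero elements of $p_1^{k_{1,j}}\cdot G$, a group of total rank at most $r-j$. That is essentially your socle count carried out at the single threshold $t=k_{1,j}$ where the sorted lists would first disagree, phrased via independent elements and total rank rather than $\Z/p\Z$-dimensions; your version, once fixed, proves the inequality uniformly in $t$ rather than by contradiction.
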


\begin{proof} Since $m_j=p_1^{k_{1,j}}p_2^{k_{2,j}}\cdots p_l^{k_{l,j}}$ (see \cite[Section II.2]{hungerford}), it suffices to show $k'_{i,j}\leq k_{i,j}$ for all $i$ and $j$. For this, it suffices to consider $p$-groups (the case $l=1$). We may assume $k'_{1,1}\leq \ldots \leq k'_{1,r}$, and now, if the the proposition is false, then $k'_{1,j}>k_{1,j}$ for some $j$, whence $H$, and hence also $G$, contains $r-j+1$ independent elements of order at least $p_1^{k_{1,j}+1}$, say $e_1,\ldots,e_{r-j+1}$. But now $p_1^{k_{1,j}}e_1,\ldots p_1^{k_{1,j}}e_{r-j+1}$ are $r-j+1$ independent elements in $p_1^{k_{1,j}}\cdot G$ (the image of $G$ under the multiplication by $p_1^{k_{1,j}}$ map), which has total rank $\rr^*(p_1^{k_{1,j}}\cdot G)$ at most $r-j$ (in view of $k_{i,1}\leq \ldots \leq k_{i,r}$), contradicting that the total rank of a group is the maximal number of independent elements (see \cite[Apendix A]{Alfred-book}).
\end{proof}

\smallskip

The next lemma will provide the key inductive mechanism for the proof of Lemma \ref{d*lemma}.

\smallskip

\begin{lemma}\label{lem-key} Let $G$ be a finite abelian group, say $G\cong \bigoplus_{i=1}^{r}C_{m_i}$, with $1<m_1|\ldots |m_r$, and let $H\leq G$, say $H\cong \bigoplus_{i=1}^{r}C_{m'_i}$, with $1\leq m'_1|\ldots |m'_r$. If $m'_t=m_t$ for some $t$, then there exists a subgroup $K\leq H$ such that $K\cong C_{m_t}$ and $K$ is a direct summand in both $H$ and $G$.
\end{lemma}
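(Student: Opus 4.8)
The plan is to build $K$ as a cyclic subgroup of $H$ of order $m_t = m'_t$ that, via the theory of independent elements in finite abelian $p$-groups, simultaneously appears as a direct summand in $H$ and in $G$. The natural strategy is to reduce to $p$-groups by working one prime at a time, since a cyclic group $C_{m_t}$ decomposes as a product of its $p$-Sylow components and a direct-summand decomposition can be assembled primewise; so I would first fix a prime $p$ dividing $|G|$ and pass to the $p$-components $G_p$ and $H_p$, writing $G\cong\bigoplus_{i=1}^r C_{m_i}$ with $m_i = \prod_p p^{k_{p,i}}$ and similarly for $H$, as in Proposition \ref{lem-align}. The hypothesis $m'_t=m_t$ then translates into $k'_{p,t}=k_{p,t}$ for every prime $p$ (this is exactly the last sentence of Proposition \ref{lem-align}), so it suffices to produce, for each $p$, a cyclic subgroup $K_p\leq H_p$ of order $p^{k_{p,t}}$ that is a direct summand of both $H_p$ and $G_p$; then $K:=\bigoplus_p K_p$ works.

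For a fixed $p$-group, write $G_p\cong\bigoplus_{i=1}^{r} C_{p^{a_i}}$ with $a_1\le\cdots\le a_r$ (so $a_i = k_{p,i}$, allowing $a_i=0$), and $H_p\cong\bigoplus_{i=1}^r C_{p^{b_i}}$ with $b_i\le a_i$ and $b_t=a_t$. The key point is that in a finite abelian $p$-group, a cyclic subgroup generated by an element of \emph{maximal} order among a compatible chain is a direct summand; more precisely, if $e\in H_p$ has order $p^{b_t}=p^{a_t}$ and $e$ is part of a basis of $H_p$ of the invariant-factor shape, then $\langle e\rangle$ is a direct summand of $H_p$. Since $b_t=a_t\le a_r$ and $b_t=a_t$ is the $t$-th invariant, I want to choose $e$ so that it is a member of a basis of $H_p$ in its invariant-factor decomposition corresponding to the invariant $p^{b_t}$, and simultaneously $e$ lies on an element of order $p^{a_t}$ in a basis of $G_p$. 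The mechanism: pick a basis $f_1,\ldots,f_r$ of $H_p$ with $\ord(f_i)=p^{b_i}$; then $f_t$ has order $p^{a_t}$ inside $G_p$, and I need to show $\langle f_t\rangle$ is a direct summand of $G_p$. This follows because an element of order $p^{a_t}$ in $G_p$ that is not a $p$-th power ``modulo lower terms'' spans a direct summand; one verifies $f_t\notin p G_p + (\text{torsion of smaller exponent})$ by a counting/rank argument identical in spirit to the one in Proposition \ref{lem-align}, using $a_t\le a_{t+1}\le\cdots$ to bound the rank of $p^{a_t-1}G_p$.

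The main obstacle I anticipate is the last step: showing that the chosen $f_t$ (a basis element of $H_p$ of order $p^{a_t}$) actually generates a direct summand of the \emph{ambient} group $G_p$, not just of $H_p$. The subtlety is that $f_t$ could a priori be a ``$p$-divisible'' element of $G_p$ even though it is indivisible in $H_p$; I would rule this out by exploiting the inequality $b_i\le a_i$ together with $b_t=a_t$: the number of basis elements of $G_p$ of order $\geq p^{a_t}$ is $r-t+1$ (indices $t,\ldots,r$), and $H_p$ already contributes $r-t+1$ independent elements $f_t,\ldots,f_r$ of order $\ge p^{a_t}$ (since $b_t=a_t\le b_{t+1}\le\cdots$ would require care — actually $b_{t+1}\geq b_t=a_t$, good). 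Applying the multiplication-by-$p^{a_t-1}$ map, $p^{a_t-1}f_t,\ldots,p^{a_t-1}f_r$ are $r-t+1$ independent elements of $p^{a_t-1}G_p$, which has rank exactly $r-t+1$; this forces $p^{a_t-1}f_t$ to be part of a basis of $p^{a_t-1}G_p$, which in turn lifts to show $\langle f_t\rangle$ splits off $G_p$ as a direct summand. Once the $p$-local statement is in hand, reassembling $K=\bigoplus_p K_p$ and checking it is a direct summand of $H$ and of $G$ (direct sums of complements along the Sylow decomposition) is routine, and $K\cong\bigoplus_p C_{p^{k_{p,t}}}\cong C_{m_t}$ by the Chinese Remainder Theorem.
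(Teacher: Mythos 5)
Your reduction to $p$-groups via Proposition \ref{lem-align} matches the paper's, but the $p$-local step has a genuine gap. You fix a basis $f_1,\ldots,f_r$ of $H_p$ with $\ord(f_i)=p^{b_i}$ and set out to show that $\langle f_t\rangle$ is a direct summand of $G_p$; this is false for some admissible choices of basis, so no argument can establish it as stated. Concretely, take $G=G_p=\langle e_1\rangle\oplus\langle e_2\rangle\cong C_p\oplus C_{p^2}$ and $H=\{g\in G: pg=0\}=\langle pe_2\rangle\oplus\langle e_1\rangle\cong C_p\oplus C_p$, so that $m'_1=m_1=p$ and $t=1$. Choosing the basis $f_1=pe_2$, $f_2=e_1$ of $H$ (perfectly consistent with $b_1\le b_2$), your recipe selects $f_t=pe_2$, and $\langle pe_2\rangle$ is not a direct summand of $G$ because $pe_2\in pG$. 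The deduction that fails is ``$r-t+1$ independent elements in a group of total rank $r-t+1$ forces $p^{a_t-1}f_t$ to be part of a basis'': in the example $pe_2$ and $e_1$ are two independent elements of the rank-two group $G$, yet $pe_2$ lies in no basis. Independence caps the number of elements by the total rank but says nothing about the indivisibility a basis element must have. (A smaller inaccuracy: the rank of $p^{a_t-1}G_p$ is the number of $i$ with $a_i\ge a_t$, which is at least, but not necessarily equal to, $r-t+1$.)

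What is missing is precisely a mechanism for selecting \emph{which} element of order $p^{a_t}$ in $H_p$ to use. The paper's proof supplies one: writing each of the $r-t+1$ independent elements $f_j$ of $H$ of order at least $p^{s_t}$ as $f_j=\Sum{i=1}{r}\alpha_{j,i}e_i$ in a basis $e_1,\ldots,e_r$ of $G$, either some $f_j$ of order $p^{s_t}$ has a coordinate with $\ord(\alpha_{j,i}e_i)=\ord(e_i)=p^{s_t}$ --- in which case $f_j$ can be swapped into the basis of $G$ and $K=\langle f_j\rangle$ works --- or else every $p^{s_t-1}f_j$ with $\ord(f_j)=p^{s_t}$ lies in the span of $p^{s_t-1}e_{t+1},\ldots,p^{s_t-1}e_r$, and one derives a contradiction by showing that the images of all the $p^{s_t-1}f_j$ remain independent in a quotient of $p^{s_t-1}\cdot G$ of total rank at most $r-t$. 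Your proposal in effect asserts that the first alternative always occurs for the single element you fixed in advance; it does not, and the second alternative is where the real work of the lemma lies.
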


\begin{proof} Let $G\cong \bigoplus_{i=1}^{l}\left(\bigoplus_{j=1}^{r}C_{p_i^{k_{i,j}}}\right)$ and $H\cong \bigoplus_{i=1}^{l}\left(\bigoplus_{j=1}^{r}C_{p_i^{k'_{i,j}}}\right)$, with each $p_i$ a distinct prime, $1\leq k_{i,1}\leq \ldots \leq k_{i,r}$ and $1\leq k'_{i,1}\leq \ldots \leq k'_{i,r}$ for all $i$.
In view of Proposition \ref{lem-align} and our hypotheses, we have $m'_i|m_i$ and $k'_{i,j}\leq k_{i,j}$, for all $i$ and $j$, and $k'_{i,t}=k_{i,t}$ for all $i$. Thus it suffices to prove the lemma for $p$-groups, and so we assume $m_i=p^{s_i}$ and $m'_i=p^{s'_i}$ for some prime $p$.

By hypothesis, $H$ contains $r-t+1$ independent elements $f_1,\ldots,f_{r-t+1}$ of order at least $m_t=p^{s_t}$ (by an appropriate subselection of elements from a basis of $H$). Let $e_1,\ldots,e_r$ be a basis for $G$ with $\ord(e_i)=p^{s_i}$, and let  $f_j=\Sum{i=1}{r}\alpha_{j,i}e_i$, where $\alpha_{j,i}\in\Z$. If $$\ord(f_j)=\ord(\alpha_{j,i}e_i)=\ord(e_i)=p^{s_t},$$ for some $i$ and $j$, then $e_1,\ldots,e_{i-1},f_j,e_{i+1},\ldots,e_r$ is also a basis for $G$, and the result follows with $K=\langle f_j\rangle$. So we may assume otherwise.

Now $f'_1:=p^{s_t-1}f_1,f'_2:=p^{s_t-1}f_2,\ldots,f'_{r-t+1}:=p^{s_t-1}f_{r-t+1}$ are $r-t+1$ independent elements in $p^{s_t-1}\cdot G$. However, in view of the conclusion of the previous paragraph, each $p^{s_t-1}f_j$ with $\ord(f_j)=p^{s_t}$ must lie in the span of $p^{s_t-1}e_{t+1},\ldots,p^{s_t-1}e_r$ (as $\ord(e_i)\leq p^{s_t}$ for $i\leq t$).

Let $\phi_L:p^{s_t-1}\cdot G\rightarrow (p^{s_t-1}\cdot G)/L$, where $L=\langle p^{s_t-1}e_1,\ldots,p^{s_t-1}e_t\rangle$, be the natural homomorphism. Then $\phi_L(f'_1),\ldots,\phi_L(f'_{r-t+1})$ are $r-t+1$ independent elements in $\phi_L(p^{s_t-1}\cdot G)$, as the following argument shows. Take any relation $$0=\Sum{i=1}{r-t+1}\alpha_i\phi_L(f'_i)=\Summ{i\in I}\alpha_i\phi_L(f'_i)+\Summ{i\notin I} \alpha_i\phi_L(f'_i),$$ where $i\in I$ are those indices such that $\ord(f'_i)>p$ (and thus $\ord(f_i)>p^{s_t}$) and $\alpha_i\in \Z$. Then, in view of the conclusion of the previous paragraph, we see that $$0=\Sum{i=1}{r-t+1}p^{s'}\alpha_if'_i$$ is a relation in $p^{s_t-1}\cdot G$, where $s':=\max\{0,\,1-\min\{\vp_p(\alpha_i)\mid i\in I\}\}$ (here $\vp_p(\alpha_i)$ is the $p$-valuation of $\alpha_i\in \Z$).  If $s'=0$, then the independence of the $f'_i$ implies that $\alpha_if'_i=0$, and thus $\phi_L(\alpha_if'_i)=\alpha_i\phi_L(f'_i)=0$, for all $i$. If $s'=1$, then the definition of $s'$ implies that $\vp_p(\alpha_j)=0$ for some $j\in I$, whence $\ord(\alpha_jf'_j)>p$ follows from the definition of $I$. As a result, $p\alpha_jf'_j\neq 0$, contradicting that the $f'_i$ are independent. Thus $\phi_L(f'_1),\ldots,\phi_L(f'_{r-t+1})$ are $r-t+1$ independent elements in $\phi_L(p^{s_t-1}\cdot G)$, which is a group of total rank at most $r-t$, contradicting that the total rank is the maximal number of independent elements (see \cite[Apendix A]{Alfred-book}). This completes the proof.
\end{proof}

\smallskip

We can now prove Lemma \ref{d*lemma}.

\smallskip

\begin{proof} If $G$ is cyclic, then $\dd^*(G)=|G|-1$, $\dd^*(H)=|H|-1$ and $\dd^*(G/H)=\frac{|G|}{|H|}-1$. Hence $\dd^*(G)\geq \dd^*(H)+\dd^*(G/H)$ follows from the general inequality $xy\geq x+y-1$ for $x,\,y\in \Z_{\geq 1}$. Therefore we may assume $\rr(G)\geq 2$ and proceed by induction on the rank $\rr(G)=r$.

Let $G\cong \bigoplus_{i=1}^{r}C_{m_i}$, $H\cong \bigoplus_{i=1}^{r}C_{m'_i}$ and $G/H\cong \bigoplus_{i=1}^{r}C_{m''_i}$, with $1<m_1|\ldots |m_r$ and $1\leq m'_1|\ldots |m'_r$ and $1\leq m''_1|\ldots |m''_r$. In view of Propositions \ref{lem-align} and \ref{lem-dual}, we see that $m'_i|m_i$ and $m''_i|m_i$ for all $i$. Hence, if $m'_i<m_i$ and $m''_i<m_i$ for all $i$, then $m'_i\leq \frac{1}{2}m_i$ and $m''_i\leq \frac{1}{2}m_i$, whence $m'_i-1+m''_i-1<m_i-1$; consequently, summing over all $i$ yields the desired bound $\dd^*(G)\geq \dd^*(H)+\dd^*(G/H)$. Therefore we may assume $m'_s=m_s$ or $m''_s=m_s$ for some $s$, and in view of Proposition \ref{lem-dual}, we may w.l.o.g. assume $m'_s=m_s$.

Now applying Lemma \ref{lem-key}, we conclude that there are subgroups $K,\,H_0\leq H$ and $G_0\leq G$ such that $H=K\oplus H_0$ and $G=K\oplus G_0$ with $K\cong C_{m_s}$. Moreover, we can choose the complimentary summand $H_0$ such that $H_0\leq G_0$. Note $\dd^*(H)=\dd^*(K)+\dd^*(H_0)$ and $\dd^*(G)=\dd^*(K)+\dd^*(G_0)$, while $G/H=(K\oplus G_0)/(K\oplus H_0)\cong G_0/H_0$, so that $\dd^*(G_0/H_0)=\dd^*(G/H)$. Thus $\dd^*(G)\geq \dd^*(H)+\dd^*(G/H)$ follows by applying the induction hypothesis to $G_0$ with subgroup $H_0$.
\end{proof}

\smallskip

Having established Lemma \ref{d*lemma}, we conclude the section with the proof of Lemma \ref{lem-split}.

\smallskip

\begin{proof}
By translation, we may w.l.o.g. assume $a_0=0\in A$ and $H=G$. Since $|A|\geq 2$, we have $\langle A\rangle =H=G$ nontrivial.
Let $K\leq H=G$ be the maximal subgroup such that there exists a subset $B\subseteq A$ with $0\in B$, $K=\langle B\rangle$ and
\be\label{maxi-wo} |\Sum{i=1}{\dd^*(K)}w_i\cdot B|=|K|,\ee if such $K$ exists, and otherwise let $K=B=\{0\}$. We may assume $K<H=G$, else the proof is complete.

Since $\langle B\rangle =K\neq G$ and $\langle A\rangle =G$, choose $g\in A\setminus B$ such that $\langle B'\rangle :=K'>K$, where $B'=B\cup \{g\}$. Let $L=\langle g\rangle$. Note $K'/K=(K+L)/K\cong L/(K\cap L)$ is cyclic. Hence, in view of Lemma \ref{d*lemma}, we have
$$|K'/K|-1=\dd^*(K'/K)\leq \dd^*(K')-\dd^*(K)\leq\dd^*(G)-\dd^*(K).$$
Thus
Kneser's Theorem implies, in view of $w_ig\in L$ and $\gcd(w_i,\exp(H))=1$ (so that $\ord(w_ig)=\ord(g)$), that $$|\Sum{i=\dd^*(K)+1}{\dd^*(K')}\phi_K(w_i\cdot B')|=|\Sum{i=\dd^*(K)+1}{\dd^*(K')}\phi_K(w_i\cdot \{0,g\})|=|K'/K|,$$ and thus from (\ref{maxi-wo}) it follows that $$|\Sum{i=1}{\dd^*(K')}w_i\cdot B'|=|K'|,$$ contradicting the maximality of $K$, and completing the proof.
\end{proof}

\section{Theorems \ref{thm-ccd-cor-davstar} and \ref{thm-ccd-bonus}}

Theorems \ref{thm-ccd-cor-davstar} and \ref{thm-ccd-bonus} will be derived by an inductive argument from the following result. (Theorem \ref{ccd-weights} is easily derived from the proof of \cite{ccd} using the both the modifications mentioned in \cite{WEGZ} and those in \cite{hamconj};  see \cite{PhD-Dissertation} for a unified presentation of the arguments.)

\smallskip

\begin{theirtheorem}\label{ccd-weights} Let $G$ be an abelian group, let $S\in \Fc(G)$, let $S'|S$, and let $W=w_1\cdots w_n$ be a
sequence of integers such that $w_ig\neq 0$ for all $i$ and all
nonzero $g\in G$. Suppose $\h(S')\leq n\leq |S'|$.  Then there
exists $H\leq G$ and an $n$-setpartition $A=A_1,\ldots,A_n$ of a subsequence
$S''$ of $S$ such that $\Sum{i=1}{n}w_i\cdot A_i$ is $H$-periodic,
$|S'|=|S''|$, and
\be\label{duckbound}|\sum_{i=1}^n w_i\cdot A_i|\geq
((N-1)n+e+1)|H|,\ee where
$N=\frac{1}{|H|}|\bigcap_{i=1}^{n}(A_i+H)|$ and $e=\Sum{j=1}{n}(|A_j|-|A_j\cap \bigcap_{i=1}^{n}(A_i+H)|)$. Furthermore, if
$H$ is nontrivial, then $N\geq 1$ and
 $\supp({S''}^{-1}S)\subseteq \bigcap_{i=1}^{n}(A_i+H)$.
\end{theirtheorem}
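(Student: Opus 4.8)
The plan is to run the proof of the unweighted partition theorem of \cite{ccd} while carrying the weights $w_1,\ldots,w_n$ along at every step. The only feature of the weights that enters is that, by the hypothesis $w_ig\neq 0$ for every nonzero $g$, each map $g\mapsto w_ig$ has trivial kernel, hence is an injective endomorphism of $G$ (an automorphism of $G$, fixing every subgroup setwise, when $G$ is finite — which is the case in all our applications). In particular $|w_i\cdot A_i|=|A_i|$ for every $A_i\subseteq G$, and, for finite $G$, $|\phi_H(w_i\cdot A_i)|=|\phi_H(A_i)|$ for every $H\leq G$; so replacing the sets $A_i$ by $w_i\cdot A_i$ throughout changes neither the relevant cardinalities nor the combinatorics of the exchange operations below, and one works verbatim with the sumset $\Sum{i=1}{n}w_i\cdot A_i$ in place of $\Sum{i=1}{n}A_i$.

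First I would fix an extremal configuration. Put $\ell=|S'|$; since $\h(S')\leq n\leq|S'|$, the sequence $S$ has at least one length-$\ell$ subsequence admitting an $n$-setpartition (namely $S'$ itself). Among all pairs $(S'',A)$ with $S''\t S$, $|S''|=\ell$, and $A=A_1,\ldots,A_n$ an $n$-setpartition of $S''$, I would choose one maximizing $|\Sum{i=1}{n}w_i\cdot A_i|$ and, subject to that, optimal for a suitable secondary invariant (for instance, lexicographically minimizing the multiset of coset-defects $|(A_i+H)\setminus A_i|$ together with the number of terms of $S{S''}^{-1}$ outside $\bigcap_i(A_i+H)$, where $H=H(\Sum{i=1}{n}w_i\cdot A_i)$); both are finite optimizations. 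Set $H:=H(\Sum{i=1}{n}w_i\cdot A_i)$. If $H=\{0\}$, then $\Sum{i=1}{n}w_i\cdot A_i$ is aperiodic, and Kneser's Theorem applied to the sets $w_i\cdot A_i$ gives $|\Sum{i=1}{n}w_i\cdot A_i|\geq\Sum{i=1}{n}|w_i\cdot A_i|-n+1=\Sum{i=1}{n}|A_i|-n+1=|S'|-n+1$, which is the asserted inequality (here $N=0$ and $e=\Sum{i=1}{n}|A_i|=|S'|$, so $((N-1)n+e+1)=|S'|-n+1$); so we may assume $H\neq\{0\}$.

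For the $H$-periodic case, I would pass to $G/H$. Then $\Sum{i=1}{n}\phi_H(w_i\cdot A_i)=\phi_H(\Sum{i=1}{n}w_i\cdot A_i)$ is aperiodic in $G/H$, so Kneser's Theorem there gives
\[|\Sum{i=1}{n}w_i\cdot A_i|=|H|\cdot\Big|\Sum{i=1}{n}\phi_H(w_i\cdot A_i)\Big|\geq|H|\Big(\Sum{i=1}{n}|\phi_H(A_i)|-n+1\Big).\]
It then suffices to extract from extremality the structural facts that (i) $C:=\bigcap_{i=1}^{n}(A_i+H)$ is nonempty — hence a union of $N\geq1$ cosets of $H$, each of which every $A_i$ meets; (ii) for every coset $g+H\not\subseteq C$ and every $i$ one has $|A_i\cap(g+H)|\leq1$; and (iii) $\supp({S''}^{-1}S)\subseteq C$. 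Indeed, (i) and (ii) give $|\phi_H(A_i)|=N+|A_i\setminus C|$ for each $i$, so $\Sum{i=1}{n}|\phi_H(A_i)|=nN+\Sum{i=1}{n}|A_i\setminus C|=nN+e$, and substituting into the display yields $|\Sum{i=1}{n}w_i\cdot A_i|\geq|H|(nN+e-n+1)=((N-1)n+e+1)|H|$; and (iii) is precisely the remaining assertion.

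Each of (i)--(iii) I would prove by contradiction via an explicit modification of $(S'',A)$ — transferring a suitable element between two of the $A_i$, exchanging a term of $S''$ for a term of $S{S''}^{-1}$ lying outside $C$, or performing a Kemperman-type $H$-transform — which, by Kneser's Theorem together with the structure of critical pairs (Lemmas \ref{lemaKST} and \ref{lemaKST-triv-case}), either strictly enlarges $\Sum{i=1}{n}w_i\cdot A_i$, contradicting the primary maximality, or preserves it while improving the secondary invariant, contradicting the secondary optimality. Because each $\cdot w_i$ is injective and $|w_i\cdot A_i|=|A_i|$, every such move and every size estimate is formally identical to its unweighted counterpart in \cite{ccd}, with $w_i\cdot A_i$ written for $A_i$; the slack allowing $S''\neq S'$ (keeping track of the $|S|-|S'|$ spare terms) and the repackaging into the form $((N-1)n+e+1)|H|$ are precisely the modifications already recorded in \cite{WEGZ} and \cite{hamconj}. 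The hard part will be the length and delicacy of this exchange analysis — and, in the weighted setting, checking that the passage $A_i\rightsquigarrow w_i\cdot A_i$ creates no spurious coincidences (it cannot, since only injectivity of $\cdot w_i$ is invoked) — rather than any genuinely new idea; once (i)--(iii) are in hand, the bound is the one-line computation above.
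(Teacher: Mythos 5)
The paper does not prove this statement at all: it imports it as Theorem E with only the parenthetical remark that it ``is easily derived from the proof of \cite{ccd} using both the modifications mentioned in \cite{WEGZ} and those in \cite{hamconj}'' --- which is precisely the derivation you outline, resting on the single observation that $w_ig\neq 0$ for all nonzero $g$ makes each map $x\mapsto w_ix$ injective, so every cardinality estimate and exchange move of the unweighted argument survives with $w_i\cdot A_i$ in place of $A_i$. Your sketch therefore follows the paper's intended route exactly and, like the paper, defers the actual extremal/exchange analysis to the cited sources rather than reproducing it.
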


\smallskip

The following basic result, which is a simple consequence of the pigeonhole principle, will be used in the proof \cite[Lemma 5.2.9]{Alfred-book}.

\smallskip

\begin{theirproposition}\label{mult_result} Let $G$ be an abelian group with $A,\,B\subseteq G$
 finite and nonempty. If $G$ is finite and $|A|+|B|\geq |G|+1$, then $A+B=G$.
\end{theirproposition}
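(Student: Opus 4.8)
The plan is to prove the statement by a single application of the pigeonhole principle, one group element at a time. Since $A+B\subseteq G$ is automatic, it suffices to show $G\subseteq A+B$; so I would fix an arbitrary $g\in G$ and aim to exhibit $a\in A$ and $b\in B$ with $g=a+b$.

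First I would introduce the translated-and-reflected set $g-B:=\{g-b\mid b\in B\}$. Since $x\mapsto g-x$ is a bijection of $G$, we have $|g-B|=|B|$. Now $A$ and $g-B$ are two subsets of the finite set $G$ with
$$|A|+|g-B|=|A|+|B|\geq |G|+1>|G|,$$
so they cannot be disjoint. Picking $x\in A\cap(g-B)$ and writing $x=a\in A$ and $x=g-b$ for some $b\in B$ yields $g=a+b\in A+B$. As $g\in G$ was arbitrary, this gives $G\subseteq A+B$, hence $A+B=G$.

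There is essentially no obstacle: the only facts used are that finiteness of $G$ makes $|G|$ a valid counting bound and that affine maps of $G$ preserve cardinality, both immediate. (The hypothesis that $A$ and $B$ are finite is automatic once $G$ is finite, so it need not be invoked separately.)
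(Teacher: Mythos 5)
Your proof is correct and is exactly the standard pigeonhole argument the paper alludes to: the paper does not write out a proof, merely describing the proposition as ``a simple consequence of the pigeonhole principle'' and citing \cite[Lemma 5.2.9]{Alfred-book}, and your intersection of $A$ with $g-B$ is precisely that argument.
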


\smallskip

We proceed with the proof of Theorems \ref{thm-ccd-cor-davstar} and \ref{thm-ccd-bonus} simultaneously.

\smallskip

\begin{proof}
Observe that the hypotheses of Theorem \ref{thm-ccd-cor-davstar} allow us to apply Theorem \ref{ccd-weights} with $G$, $S'|S$, $W=w_1\cdots w_n$ and $n$ the same in both theorems. Let $H$, $S''$, $A=A_1,\ldots,A_n$, $N$ and $e$ be as given by Theorem \ref{ccd-weights}. If $H$ is trivial, then (\ref{duckbound}) implies $|\Sum{i=1}{n} w_i\cdot A_i|\geq |S'|-n+1$, and if $H=G$, then $\Sum{i=1}{n} w_i\cdot A_i$ being $H$-periodic implies $|\Sum{i=1}{n} w_i\cdot A_i|=|G|=m$; in either case, (i) follows. Therefore we may assume $H$ is a proper, nontrivial subgroup. This completes the case when $|G|=m$ is prime in Theorem \ref{thm-ccd-cor-davstar}.

Concerning Theorem \ref{thm-ccd-bonus}(i), in view of $\h(S')\leq n$ and $|T^{-1}S|\geq n-\dd^*(G)+|S|-|S'|$, it is easily seen that the setpartition $B_1,\ldots, B_{\dd^*(G)}$ of $T$ can be extended to a setpartition $A_1,\ldots,A_n$ of a sequence $S''|S$, with $B_i\subseteq A_i$ for $i\leq \dd^*(G)$, $T|S''$ and $|S''|=|S'|$, by the following argument. Begin with $A_i=B_i$ for $i\leq \dd^*(G)$ and $A_i=\emptyset$ for $\dd^*(G)<i\leq n$. If $W|S$ are all terms with multiplicity at least $n$ and $W'=\prod_{g\in\supp(W)}g^n$, then augment the sets $A_i$ so that $\supp(W)\subseteq A_i$ for all $i$  (that is, simply include each $g\in \supp(W)$ in each set $A_i$ if it was not already there). We must have $|{W'}^{-1}W|\leq |S|-|S'|$, else it would have been impossible that a subsequence of $S$ with length $|S'|$ had an $n$-setpartition, which we know is the case since $\h(S')\leq n\leq |S'|$. All remaining terms in $T^{-1}{W}^{-1}S$ have multiplicity at most $n-1$, and so we can distribute all but $|S|-|S'|- |{W'}^{-1}W|$ of them among the $A_i$ so that no $A_i$ contains two equal terms, always choosing to place an element in an empty set if available. Since $|T^{-1}S|\geq n-\dd^*(G)+|S|-|S'|$, we are either assured that there are enough terms to fill all empty sets in this manner, or that we can move some of the terms from $W'$ (but not from $T$) placed in the $A_i$ with $i\leq \dd^*(G)$ so that this is the case, and then the resulting $A_i$ give the $n$-setpartition with the desired properties.

Consequently, (i) in Theorem \ref{thm-ccd-bonus} is trivial, and since the only nontrivial subgroup of $G$, when $|G|$ is prime, is $G$, we see that the case $|G|$ prime is complete for Theorem \ref{thm-ccd-bonus} as well.

We now proceed by induction on the number of prime factors of $m$.  We first show that (i) failing in Theorem \ref{thm-ccd-cor-davstar} implies the hypotheses of Theorem \ref{thm-ccd-bonus} hold (this is Claim B below), from which we infer that Theorem \ref{thm-ccd-bonus} implies Theorem \ref{thm-ccd-cor-davstar}. The remainder of the proof will then be devoted to proving Theorem \ref{thm-ccd-bonus} assuming by induction hypothesis that Theorem \ref{thm-ccd-cor-davstar} holds in any abelian group $G'$ with $|G'|$ having a smaller number of prime factors than $|G|$.

To this end, we assume (i) fails. Since (i) holds trivially when $n=1$ (in view of $n\geq \h(S')$), we may assume $n\geq 2$. Let $x:=|S|-|S'|\geq 0$. Since (i) fails, it follows from (\ref{duckbound}) that \ber\label{zap-cdt}((N-1)n+e+1)|H|&\leq& |S'|-n.\eer Much of the proof is contained in the following claim.

\medskip

\textbf{Claim B: } There exists a nontrivial subgroup $K$, $g'\in G$, and an $\dd^*(K)$-setpartition $B=B_1,\ldots, B_{\dd^*(K)}$ of a subsequence $T|S$ with $T\in \Fc(g'+K)$, such that \be\label{steally}\Sum{i=1}{\dd^*(K)}w_i\cdot B_i=\left(\Sum{i=1}{\dd^*(K)}w_i\right)g'+K\ee
and $T^{-1}S$ contains at least $n-\dd^*(K)+x$ terms from $g'+K$.

\begin{proof} There are two cases.

\smallskip

\emph{Case 1: } $N\geq 2$. If there does not exist $g'\in \bigcap_{i=1}^{n}(A_i+H)$ and $A_j$ and $A_k$ such that $j\neq k$ and \be\label{jump}|A_k\cap (g'+H)|+|A_j\cap (g'+H)|\geq |H|+1,\ee then it would follow from the pigeonhole principle (since $n\geq 2$) that $$|S'|=|S''|\leq \frac{1}{2}|H|Nn+e,$$ which combined with (\ref{zap-cdt}) implies $((N-1)n+e)|H|\leq \frac{1}{2}|H|Nn+e-n$, whence
$$|H|nN\leq 2(|H|-1)(n-e)\leq 2n(|H|-1),$$ implying $N<2$, a contradiction. Therefore we may assume such $g'$ and $A_j$ and $A_k$ exist, and w.l.o.g. $j=1$ and $k=2$. By translation we may also assume $g'=0$.

From Proposition \ref{mult_result}, (\ref{jump}) and $\gcd(w_i,\exp(G))=1$, it follows that \be\label{showsit}|w_1\cdot(A_1\cap H)+w_2\cdot(A_2\cap H)|=|H|.\ee Let $B_j=A_j\cap \bigcap_{i=1}^{n}(A_i+H)$ for $j=1,\ldots, n$, and note that $\phi_H(B_i)=\phi_H(B_j)$ for all $i$ and $j$. Let $K=H+\langle B_i\rangle$ and $T=\prod_{i=1}^{\dd^*(K)}B_i\in\Fc(K)$. From the conclusion of Theorem \ref{ccd-weights}, we know $T^{-1}S$ contains at least $n-\dd^*(K)+x$ terms from $K$ (since each $A_i$ intersects $\bigcap_{i=1}^{n}(A_i+H)$ in at least $N\geq 1$ points and $\supp({S''}^{-1}S)\subseteq \bigcap_{i=1}^{n}(A_i+H)$).

If
$\dd^*(H)\geq 2$, then from (\ref{showsit}) and $g'=0$ we find that  \be\label{spicder}H\subseteq \Sum{i=1}{\dd^*(H)}w_i\cdot B_i.\ee On the otherhand, if $\dd^*(H)=1$, then $|H|=2$, whence (\ref{jump}) and the pigeonhole principle imply that w.l.o.g. $|A_1\cap H|=|H|$, and thus (\ref{spicder}) holds in this case as well. Since $n\geq \dd^*(G)\geq \dd^*(K)$, it follows by Lemma \ref{d*lemma} that $$n-\dd^*(H)\geq \dd^*(K)-\dd^*(H)\geq \dd^*(K/H).$$ Thus, applying Lemma \ref{lem-split}, taking $\phi_H(B_i)$ for $A$ and $G/H$ for $G$ (recall that $g'=0$ and $|\phi_H(B_i)|=N\geq 2$), it follows that $$\Sum{i=\dd^*(H)+1}{\dd^*(K)}\phi_H(w_i\cdot B_i)=K/H,$$ which in view of (\ref{spicder}) implies that (\ref{steally}) holds. In view of the conclusion of the previous paragraph, this completes the claim.

\smallskip

\emph{Case 2: } $N=1$. Let $T$ be the subsequence of $S$ consisting of all terms from $g+H$, let $T'|T$ be the subsequence consisting of all terms with multiplicity at least $\dd^*(H)$, and let $B=\supp(T')$. From (\ref{zap-cdt}) and Theorem \ref{ccd-weights}, it follows that \be \label{lotsandlots} |T|\geq x+|S'|-e\geq (e+1)|H|+n+x-e\geq n+|H|+x.\ee By translation, we may w.l.o.g. assume $0\in \supp(T)$, and that $0\in \supp(T')$ if $\supp(T')\neq \emptyset$. We handle two subcases.

\medskip

\emph{Subcase 2.1: } Suppose there exists a subsequence $T_0|T$ with $\h(T_0)\leq \dd^*(H)$ and $|T_0|=\dd^*(H)+|H|-1$. Then we can apply the induction hypothesis to $T_0|T$ with $G$ taken to be $H$ and $n$ taken to be $\dd^*(H)$. Let $B=B_1,\ldots,B_{\dd^*(H)}$ be the resulting set partition and $T'_0$ the resulting subsequence of $T$. From (\ref{lotsandlots}), we see that \be\label{batmail}|{T'_0}^{-1}T|=|T|-|T'_0|=|T|-|T_0|=|T|-\dd^*(H)-|H|+1\geq n+x-\dd^*(H).\ee If (i) holds, then $|T_0|=\dd^*(H)+|H|-1$ implies that $$|\Sum{i=1}{\dd^*(H)}w_i\cdot B_i|=|H|,$$ and the claim is complete (in view of (\ref{batmail})) using $T'_0$ for $T$ and $H$ for $K$. On the otherhand, if (ii) holds with (say) subgroup $K\leq H$, $g'\in H$ and setpartition $B_1,\ldots,B_{\dd^*(H)}$, then (\ref{steally}) follows from (ii)(d) (taking $T$ to be $T''_0:=B_1\cdots B_{\dd^*(K)}$), while (ii)(a) and (\ref{lotsandlots}) imply ${T''_0}^{-1}T$ contains at least $$\dd^*(H)-\dd^*(K)+|T|-|T_0|=-\dd^*(K)+|T|-|H|+1\geq n-\dd^*(K)+x$$ terms from $g'+K$, whence the claim follows.

\medskip

\emph{Subcase 2.2:} There does not exist a subsequence $T_0|T$ with $\h(T_0)\leq \dd^*(H)$ and $|T_0|=\dd^*(H)+|H|-1$. Consequently, $$|\supp(T')|\dd^*(H)+|{T'}^{-1}T|\leq \dd^*(H)+|H|-2,$$ which, in view of (\ref{lotsandlots}), yields \be\label{indianajones}|T'|\geq n+x+2+(|\supp(T')|-1)\dd^*(H).\ee
Since $\vp_g(T')\leq \vp_g(T)\leq n+x$ for all $g\in G$ (in view of $\h(S')\leq n$), it follows that $|T'|\leq (n+x)|\supp(T')|$. Thus, in view of $n\geq \dd^*(G)\geq \dd^*(H)$ and $x\geq 0$, we conclude from (\ref{indianajones}) that $|\supp(T')|\geq 2$.

Let $K=\langle \supp(T')\rangle\leq H$ and let $T_0:=\prod_{g\in \supp(T')}g^{\dd^*(K)}$ be the subsequence of $T'$ (recall the definition of $T'$) obtained by taking each term with multiplicity exactly $\dd^*(K)\leq \dd^*(H)$. Observe, in view of (\ref{indianajones}) and $\dd^*(K)\leq \dd^*(H)$, that \ber\label{indy}|T_0^{-1}T'|&=&|T'|-|T_0|= |T'|-|\supp(T')|\dd^*(K)\\\nn&\geq& n+x+2+(|\supp(T')|-1)(\dd^*(H)-\dd^*(K))-\dd^*(K)\geq n+x-\dd^*(K).\eer Applying Lemma \ref{lem-split} with $A$ taken to be $\supp(T')$, we conclude (recall $0\in \supp(T')$) that $$\Sum{i=1}{\dd^*(K)}w_i\cdot B_i=K,$$ where $B_i=\supp(T')$ for $i=1,\ldots, \dd^*(K)$. Hence, in view of (\ref{indy}), we see that the claim follows (taking $T$ to be $T_0$).\end{proof}

Having now established Claim B, we see that it suffices to prove Theorem \ref{thm-ccd-bonus} to complete the inductive proofs of Theorems \ref{thm-ccd-cor-davstar} and \ref{thm-ccd-bonus}. Let $K$ be a maximal subgroup satisfying Claim B, and let $g'$, $T$ and $B_1,\ldots,B_{\dd^*(K)}$ be as given by Claim B. By translation we may w.l.o.g. assume $g'=0$. Let $S_0|S$ be the subsequence consisting of all terms $x$ with $\phi_K(x)\neq 0$, and let $e:=|S_0|$. As remarked earlier, if $K=G$, then Theorem \ref{thm-ccd-bonus}(i) follows trivially. Therefore assume $K<G$.
Observe that Claim B implies \be\label{cupidity} |T^{-1}S_0^{-1}S|\geq n-\dd^*(K)+x.\ee
\medskip

Suppose $\h(\phi_K(S_0))\geq \dd^*(G/K)$. Then let $g\in \supp(S_0)$ with $\vp_{\phi_K(g)}(\phi_K(S_0))\geq \dd^*(G/K)$ and let $L=K+\langle g\rangle$. By Lemma \ref{d*lemma}, we have \be\label{stucky2}\dd^*(L)\geq \dd^*(K)+\dd^*(L/K).\ee In view of (\ref{cupidity}), $\h(\phi_K(S_0))\geq \dd^*(G/K)\geq \dd^*(L/K)$ and $n\geq \dd^*(G)\geq \dd^*(L)$, we can find a subsequence $T'|T^{-1}S$ such that $\phi_K(T')=\phi_K(g)^{\dd^*(L/K)}0^{\dd^*(L)-\dd^*(K)}$, and thus such that $(TT')^{-1}S$ contains at least \be\label{stuckky}n-\dd^*(K)+x-(\dd^*(L)-\dd^*(K))= n-\dd^*(L)+x\ee terms from $L$. In view of (\ref{stucky2}), let $B_{\dd^*(K)+1},\ldots,B_{\dd^*(L)}$ be a setpartition of $T'$ such that $|B_i|=2$ and $\phi_K(B_i)=\{0,\phi_K(g)\}$, for $i=\dd^*(K)+1,\ldots,\dd^*(K)+\dd^*(L/K)$, and $|B_i|=1$ and $\phi_K(B_i)=\{0\}$, for $i=\dd^*(K)+\dd^*(L/K)+1,\ldots, \dd^*(L)$.

Applying Lemma \ref{lem-split} to $\{0,\phi_K(g)\}$, we conclude that $$|\Sum{i=\dd^*(K)+1}{\dd^*(K)+\dd^*(L/K)}\phi_K(w_i\cdot B_i)|=|L/K|,$$  and consequently (in view of (\ref{steally}) and (\ref{stucky2})) that $$|\Sum{i=1}{\dd^*(L)}w_i \cdot B_i|=|L|.$$ But now, in view also of (\ref{stuckky}), we see that the maximality of $K$ is contradicted by $L$. So we may instead assume $\h(\phi_K(S_0))< \dd^*(G/K)$.

\medskip

Let $R$ be a subsequence of $T^{-1}S$ such that $S_0|R$ and $|R|=|S_0|+\dd^*(G/K)$ (possible in view of (\ref{cupidity}), $x\geq 0$, $n\geq \dd^*(G)$ and Lemma \ref{d*lemma}). Moreover, from (\ref{cupidity}), \be\label{morelove} |(TR)^{-1}S|\geq n+x-\dd^*(K)-\dd^*(G/K),\ee
with all term of $(TR)^{-1}S$ contained in $K$ (since $S_0|R$).

Since $\h(\phi_K(S_0))< \dd^*(G/K)$, since $\phi_K(y)=0$ for $y|S_0^{-1}S$, and since $\phi_K(y)\neq 0$ for $y|S_0$, it follows that $\h(\phi_K(R))\leq \dd^*(G/K)$. Thus we can apply the induction hypothesis to $\phi_K(R)|\phi_K(R)0^{|G/K|-1}$ with $n=\dd^*(G/K)$ and $G$ taken to be $G/K$. Let $\phi_K(B_{\dd^*(K)+1}),\ldots,\phi_K(B_{\dd^*(K)+\dd^*(G/K)})$ be the resulting setpartition and $\phi_K(R')$ the resulting sequence, where $R'|R0^{|G/K|-1}$ and $B_{\dd^*(K)+1},\ldots,B_{\dd^*(K)+\dd^*(G/K)}$ is a setpartition of $R'$. Observe, since $\vp_0(\phi_K(R))=\dd^*(G/K)$, that $\supp(\phi_K(R')^{-1}\phi_K(R)0^{|G/K|-1})=\{0\}$, and thus that we can w.l.o.g. assume $R'=R$ and  likewise that $B_{\dd^*(K)+1},\ldots,B_{\dd^*(K)+\dd^*(G/K)}$ is a setpartition of $R$.

\medskip

Suppose (ii) holds and let $L/K$ be the corresponding subgroup. Since $\vp_0(\phi_K(R)0^{|G/K|-1})\geq |G/K|-1$, it follows in view of (ii)(c) that w.l.o.g. $g=0$ (where $g$ is as given by (ii)). But then (ii)(d) implies $$\Sum{i=\dd^*(K)+1}{\dd^*(K)+\dd^*(L/K)}w_{i}\cdot \phi_K(B_i)=L/K,$$ whence (\ref{steally}) implies $$\Sum{i=1}{\dd^*(K)+\dd^*(L/K)}w_{i}\cdot B_i=L.$$ In view of (ii)(a) and (\ref{morelove}), it follows that there are still at least \be\label{ghastlyghost}n+x-\dd^*(K)-\dd^*(G/K)+(\dd^*(G/K)-\dd^*(L/K))=n+x-\dd^*(K)-\dd^*(L/K)\ee terms remaining in $\left(\prod_{i=1}^{\dd^*(K)+\dd^*(L/K)}B_i\right)^{-1}S$ that are contained in $L$. Thus (in view of Lemma \ref{d*lemma}) by appending on an additional $\dd^*(L)-\dd^*(L/K)-\dd^*(K)\geq 0$ terms $B_i$, for $i=\dd^*(K)+\dd^*(L/K)+1,\ldots,\dd^*(L)$, with each such new $B_i$ consisting of a single element from $L$ contained in $\left(\prod_{i=1}^{\dd^*(K)+\dd^*(L/K)}B_i\right)^{-1}S$ (that is, $\supp(\prod_{i=\dd^*(K)+\dd^*(L/K)+1}^{\dd^*(L)}B_i)\subseteq L$ with $\prod_{i=\dd^*(K)+\dd^*(L/K)+1}^{\dd^*(L)}B_i|\left(\prod_{i=1}^{\dd^*(K)+\dd^*(L/K)}B_i\right)^{-1}S$), we see that $$\Sum{i=1}{\dd^*(L)}w_{i}\cdot B_i=L$$ and with $(\prod_{i=1}^{\dd^*(L)}B_i)^{-1}S$ containing at least (in view of (\ref{ghastlyghost})) $$n+x-\dd^*(K)-\dd^*(L/K)-(\dd^*(L)-\dd^*(L/K)-\dd^*(K))=n+x-\dd^*(L),$$ terms from $L$. Hence $L$ contradicts the maximality of $K$. So we may assume instead that (i) holds.

\medskip

As above, let $B_i$, for $i=\dd^*(K)+\dd^*(G/K)+1,\ldots, n$ (in view of (\ref{morelove})), be defined by partitioning, as singleton terms (i.e., $|B_i|=1$), $n-\dd^*(K)-\dd^*(G/K)$ of the terms of $\left(\prod_{i=1}^{\dd^*(K)+\dd^*(G/K)}B_i\right)^{-1}S=(TR)^{-1}S$ (which are all from $K$ in view of the comment after (\ref{morelove})).

If  \be\label{almostthere}\Sum{i=\dd^*(K)+1}{\dd^*(K)+\dd^*(G/K)}w_{i}\cdot \phi_K(B_i)=G/K,\ee then (\ref{steally}), $n\geq \dd^*(G)$ and Lemma \ref{d*lemma} imply that $$\Sum{i=1}{\dd^*(G)}w_{i}\cdot B_i=G.$$ Thus, in view of (\ref{morelove}), we see that  Claim B holds with $K=G$, contrary to assumption. Therefore we can assume (\ref{almostthere}) fails, which, in view of $|R|=|S_0|+\dd^*(G/K)$ and (i) holding for $\phi_K(R)$ with $n=\dd^*(G/K)$, implies that $e:=|S_0|\leq |G/K|-2$ and, in view of (\ref{steally}), that
 $$|\Sum{i=1}{n}w_{i}\cdot B_i|\geq (e+1)|K|.$$ The remaining conclusions for (ii) now follow easily from Claim B holding with $K$ (by the same arguments used for establishing Theorem \ref{thm-ccd-bonus}(i)), so that (ii) holds for $S'$ with subgroup $K$, as desired.
This completes the proof.
\end{proof}

\medskip

With the proof of Theorems \ref{thm-ccd-cor-davstar} and \ref{thm-ccd-bonus} complete, the improvement to Theorem \ref{thm-gao-coset-condition} follows as a simple corollary.

\begin{corollary}\label{cor-gao}Let $G$ be a finite abelian group, and let $S\in \Fc(G)$ with $|S|\geq |G|+\dd^*(G)$. Then either (i) $\Sigma_{|G|}(S)=G$ or (ii) there exists a coset $g+H$ such that all but at most $|G/H|-2$ terms of $S$ are from $g+H$.
\end{corollary}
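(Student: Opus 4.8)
The plan is to derive the Corollary from Theorem~\ref{thm-ccd-cor-davstar} applied with the unit weight sequence $W=1^n$, after first passing to the complementary length. For $T\mid S$ with $|T|=|S|-|G|$, the complement $ST^{-1}$ has length $|G|$ and sum $\sigma(S)-\sigma(T)$, and $T\mapsto ST^{-1}$ is a bijection between subsequences of these two lengths; hence $\Sigma_{|G|}(S)=\sigma(S)-\Sigma_{|S|-|G|}(S)$, so $\Sigma_{|G|}(S)=G$ if and only if $\Sigma_{|S|-|G|}(S)=G$. Put $n:=|S|-|G|$, so $n\geq\dd^*(G)$ by hypothesis.

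The main case is $\h(S)\leq n$. Here I apply Theorem~\ref{thm-ccd-cor-davstar} with the group $G$, the sequences $S'=S$ and $W=1^n$ (legitimate since $\gcd(1,\exp(G))=1$, $n\geq\dd^*(G)$ and $\h(S)\leq n\leq|S|$). Since the output $S''\mid S$ satisfies $|S''|=|S'|=|S|$, we have $S''=S$. In case (i) of the theorem the $n$-setpartition $A=A_1,\ldots,A_n$ of $S$ satisfies $|\Sum{i=1}{n}A_i|\geq\min\{|G|,\,|S|-n+1\}=\min\{|G|,\,|G|+1\}=|G|$, so $\Sum{i=1}{n}A_i=G$; choosing one term from each $A_i$ exhibits every element of $G$ as an $n$-term subsequence sum of $S$, so $\Sigma_n(S)=G$, and by the complementation step $\Sigma_{|G|}(S)=G$, which is conclusion (i) of the Corollary. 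In case (ii) of the theorem, part (c) states that all but at most $|G/H|-2$ terms of $S$ lie in the coset $g+H$, which is conclusion (ii) of the Corollary.

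It remains to handle $\h(S)>n$. Let $a\in G$ with $\vp_a(S)=\h(S)$; since translating $S$ by $-a$ changes neither $\Sigma_{|G|}(S)$ (as $|G|a=0$) nor the validity of conclusion (ii), we may assume $a=0$. If $|S|-\vp_0(S)\leq|G|-2$, then conclusion (ii) of the Corollary holds with $H=\{0\}$ and $g=0$, since exactly $|S|-\vp_0(S)\leq|G|-2=|G/\{0\}|-2$ terms of $S$ lie outside $\{0\}$. Otherwise $\vp_0(S)\leq|S|-|G|+1$, which together with $\vp_0(S)=\h(S)>|S|-|G|=n$ forces $\vp_0(S)=n+1$; thus $S=0^{n+1}T$ with $|T|=|G|-1$ and $0\notin\supp(T)$. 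A short computation — pad a subsequence of $T$ with the requisite number of zeros to reach length $|G|$, then complement inside $T$ — gives $\Sigma_{|G|}(S)=\sigma(T)-\Sigma_{\leq k}(T)$, where $k:=\min\{n,\,|G|-1\}\geq\dd^*(G)$. So it suffices to establish the dichotomy: either $\Sigma_{\leq k}(T)=G$ (whence conclusion (i) of the Corollary), or some proper nontrivial subgroup $H$ contains all but at most $|G/H|-2$ terms of $T$ (whence conclusion (ii) of the Corollary, with $g=0$, since the $n+1$ zeros of $S$ also lie in $H$).

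I expect this last dichotomy for $T$ to be the main obstacle. One attacks it by applying Theorems~\ref{thm-ccd-cor-davstar} and~\ref{thm-ccd-bonus} once more, now to (a suitable subsequence of) $T$ with ambient sequence $T$ and weights $1^{\dd^*(G)}$: because $|T|=|G|-1$, the bound $\min\{|G|,\,|T|-\dd^*(G)+1\}=|G|-\dd^*(G)$ available in conclusion~(i) does not by itself reach all of $G$, so one must invoke the periodicity/Kneser structure of $\Sum{i=1}{\dd^*(G)}A_i$ recorded inside Theorem~\ref{ccd-weights}, or the maximality mechanism of Theorem~\ref{thm-ccd-bonus}, to rule out the scenario where $\Sigma_{\leq k}(T)$ falls just short of $G$ without the terms of $T$ being confined to a coset. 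The pointiest sub‑case is a tie at the maximum multiplicity of $S$ — equivalently $T$ itself containing an element of multiplicity $n+1>\dd^*(G)$ — which should be tracked explicitly through this argument, either by removing one copy of each maximal term before applying the theorems or by producing the coset directly from the subgroup generated by the maximal terms.
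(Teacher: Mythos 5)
Your first case, $\h(S)\leq |S|-|G|$, is handled exactly as in the paper: apply Theorem~\ref{thm-ccd-cor-davstar} with $S'=S$, $W=1^{|S|-|G|}$, and use the complementation identity $\Sigma_{|G|}(S)=\sigma(S)-\Sigma_{|S|-|G|}(S)$. The problem is the remaining case $\vp_0(S)=\h(S)=|S|-|G|+1$, which you reduce to a ``dichotomy for $T$'' and then explicitly leave unproved (``I expect this last dichotomy for $T$ to be the main obstacle''). That is a genuine gap, not a routine verification: it is precisely the part of the argument where the full strength of Theorem~\ref{thm-ccd-bonus} and Lemma~\ref{lem-split} is needed. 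Two concrete problems with the reduction itself: (a) the identity should read $\Sigma_{|G|}(S)=\sigma(T)-\bigl(\{0\}\cup\Sigma_{\leq k}(T)\bigr)$, since the choice of exactly one zero together with all of $T$ contributes $\sigma(T)$ (equivalently, the empty complement inside $T$); and (b) with that term omitted, the dichotomy you state is false --- for $G=\Z/4\Z$ and $T=1^3$ one has $\Sigma_{\leq 3}(T)=\{1,2,3\}\neq G$ while no proper nontrivial subgroup contains all but at most $|G/H|-2$ terms of $T$, yet $S=0^4 1^3$ does satisfy $\Sigma_4(S)=G$. So even the corrected statement still requires an argument you have not supplied.

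The paper closes this case differently, and you may want to compare. Rather than stripping off all $n+1$ zeros and working with $T$ alone, it exploits the fact that Theorem~\ref{thm-ccd-cor-davstar} allows $S'$ to be a \emph{proper} subsequence of the ambient $S$: if $0$ is the unique element of maximal multiplicity, apply the theorem to $S'=0^{-1}S$ inside $S$ (now $\h(S')\leq n$ and $|S'|-n+1=|G|$, so case (i) already yields all of $G$). If a second element $g$ also attains $\h(S)$, let $S'$ be a maximal subsequence with $\h(S')=n$, set $A=\supp(S'^{-1}S)\supseteq\{0,g\}$ and $K=\langle A\rangle$; Lemma~\ref{lem-split} applied to $A$ produces the setpartition $B_1=\cdots=B_{\dd^*(K)}=A$ needed to satisfy the hypotheses of Theorem~\ref{thm-ccd-bonus}, whose conclusion (i) or (ii) then gives the two alternatives of the Corollary directly. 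Your closing remark about ``the subgroup generated by the maximal terms'' points in the right direction, but the proposal as written does not carry it out.
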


\begin{proof} Let $|S|=|G|+\dd^*(G)+x$, so $x\geq 0$.
We assume (ii) fails for every $H$ and prove (i) holds. Note (ii) failing with $H$ trivial implies $\h(S)\leq \dd^*(G)+x+1$.

Suppose $\h(S)\leq \dd^*(G)+x$. Then we can apply Theorem \ref{thm-ccd-cor-davstar} with $n=\dd^*(G)+x$, $S=S'$ and $w_i=1$ for all $i$. If Theorem \ref{thm-ccd-cor-davstar}(ii) holds, then Theorem \ref{thm-ccd-cor-davstar}(ii)(c) implies Corollary \ref{cor-gao}(ii), contrary to assumption. If instead Theorem \ref{thm-ccd-cor-davstar}(i) holds, then from $|S|=|G|+\dd^*(G)+x$ we conclude that $\Sigma_{\dd^*(G)+x}(S)=G$. Since $\Sigma_{n}(S)=\sigma(S)-\Sigma_{|S|-n}(S)$ holds trivially for any $n$ (there is a natural correspondence between $S_0|S$ and ${S_0}^{-1}S|S$), it now follows that (i) holds for $S$, as desired. So we may assume $\h(S)= \dd^*(G)+x+1$.

By translation, we may w.l.o.g. assume $0$ is a term with multiplicity $\h(S)$ in $S$. We may also assume there is a nonzero $g\in G$ with $\vp_g(S)=\vp_0(S)=\h(S)$, else applying Theorem \ref{thm-ccd-cor-davstar} to $0^{-1}S|S$ completes the proof as in the previous paragraph. Let $S'|S$ be a maximal length subsequence with $\h(S')=\dd^*(G)+x$, let $A=\supp(S'^{-1}S)$, and let $K=\langle A\rangle$. Notice $\{0,g\}\subseteq A$. Hence, since $\h(S)=\dd^*(G)+x+1$, it follows from Lemma \ref{lem-split} that the hypotheses of Theorem \ref{thm-ccd-bonus} hold with $n=\dd^*(G)+x$, $S'|S$, $K$, and $w_i=1$ and $B_i=A$ for all $i$.
If Theorem \ref{thm-ccd-bonus}(i) holds, then $|G|=|\Sigma_{\dd^*(G)+x}(S)|=|\Sigma_{|G|}(S)|$ (as in the case $\h(S)\leq \dd^*(G)+x$), yielding (i). On the otherhand, Theorem \ref{thm-ccd-bonus}(ii) implies (ii) holds (in view of Theorem \ref{thm-ccd-cor-davstar}(ii)(c)). Thus the proof is complete.
%
\end{proof}

\smallskip

Next, the related corollary concerning Conjecture \ref{conjetura}. Note  the coset condition assumed below for $H$ trivial implies $\h(S)\leq |S|-|G|+1$, so the hypothesis $\h(S)\leq h\leq |S|-|G|+1$ is not vacuous. The case $h=|G|$ and $|S|=2|G|-1$ in Corollary \ref{cor-spud} is the result from  \cite{ordaz-quiroz}.

\smallskip

\begin{corollary}\label{cor-spud}Let $G$ be a finite abelian group, let $S\in \Fc(G)$, let $h\in\Z$ be such that  $\max\{\h(S),\,\dd^*(G)\}\leq h\leq |S|-|G|+1$, and let $W$ be a sequence of integers relatively prime to $\exp(G)$ with $|W|\geq h$. Suppose there does not exist a  coset $g+H$ such that all but at most $|G/H|-2$ terms of $S$ are from $g+H$. Then $\Sigma_{h}(W,S)=G$. In particular, $\Sigma(W,S)=G$
\end{corollary}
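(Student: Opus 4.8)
The plan is to obtain the conclusion as an essentially immediate consequence of Theorem~\ref{thm-ccd-cor-davstar} applied with $S'=S$. Since $|W|\geq h$, fix a subsequence $W'=w_1\cdots w_h$ of $W$ of length $h$; each $w_i$ is still relatively prime to $\exp(G)$. Put $n=h$. The standing hypothesis $\max\{\h(S),\dd^*(G)\}\leq h\leq |S|-|G|+1$ gives exactly $n\geq\dd^*(G)$ together with $\h(S')=\h(S)\leq n\leq |S|-|G|+1\leq|S|=|S'|$, so Theorem~\ref{thm-ccd-cor-davstar} applies and produces a subsequence $S''|S$ with $|S''|=|S'|=|S|$, hence $S''=S$, for which either alternative (i) or (ii) of that theorem holds.

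Next I would rule out (ii): its conclusion~(c) asserts that all but at most $|G/H|-2$ of the terms of $S$ lie in a single coset $g+H$ for some proper, nontrivial $H\leq G$, directly contradicting the hypothesis of the corollary. Thus (i) must hold, so there is an $n$-setpartition $A=A_1,\ldots,A_n$ of $S$ with $|\Sum{i=1}{n}w_i\cdot A_i|\geq\min\{|G|,\,|S'|-n+1\}$. Here $|S'|-n+1=|S|-h+1\geq |S|-(|S|-|G|+1)+1=|G|$, so the minimum equals $|G|=m$, and therefore $\Sum{i=1}{n}w_i\cdot A_i=G$.

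Finally, since $A_1\cdots A_n=S$, choosing one element $a_i\in A_i$ for each $i$ yields a subsequence $a_1\cdots a_n$ of $S$ of length $n=h$; pairing it with the weight subsequence $W'$ of $W$ (of length $h\leq|W|$) shows $\Sum{i=1}{n}w_ia_i\in\Sigma_h(W,S)$, so $\Sum{i=1}{n}w_i\cdot A_i\subseteq\Sigma_h(W,S)$ and hence $\Sigma_h(W,S)=G$. As $h\leq\min\{|W|,|S|\}$, this gives $\Sigma(W,S)\supseteq\Sigma_h(W,S)=G$, which is the final ``in particular'' clause. The remark preceding the statement (that the coset hypothesis forces $\h(S)\leq|S|-|G|+1$) also shows the lower bound on $h$ coming from $\h(S)$ is automatically compatible with the prescribed upper bound.

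There is no substantial obstacle here: the content is carried entirely by Theorem~\ref{thm-ccd-cor-davstar} (Theorem~\ref{thm-ccd-bonus} is not needed for this particular corollary), and the only points requiring a moment's attention are the arithmetic inequality $|S'|-n+1\geq|G|$ — which is precisely where the upper bound $h\leq|S|-|G|+1$ is used — and the routine observation that a weighted setpartition sum over $S$ really does lie in $\Sigma_h(W,S)$ once $|W|\geq h$.
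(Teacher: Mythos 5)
Your proposal is correct and follows essentially the same route as the paper: the paper's proof of Corollary \ref{cor-spud} is exactly the application of Theorem \ref{thm-ccd-cor-davstar} with $n=h$ and $S'=S$, ruling out alternative (ii) via the coset hypothesis and using $h\leq |S|-|G|+1$ to force $\min\{m,\,|S|-h+1\}=|G|$ in alternative (i). Your additional observations (that $S''=S$ since $|S''|=|S|$, and that the weighted setpartition sum lies in $\Sigma_h(W,S)$) are the same routine points the paper leaves implicit.
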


\begin{proof}The proof is identical to the case $\h(S)\leq \dd^*(G)+x$ in Corollary \ref{cor-gao} using $n=h$, the only other exception being that the identity $|\Sigma_{n}(W,S)|=|\Sigma_{|S|-n}(W,S)|$ is not necessarily valid for arbitrary $W$, $S$ and $n$, thus preventing the proof of Conjecture \ref{conjetura} itself.
\end{proof}

\smallskip

Now we derive Corollary \ref{thm-conj-Hamidoune-var} from Theorem \ref{thm-ccd-bonus}.

\smallskip

\begin{proof}Let $m=\exp(G)$. By considering $G$ as a $\Z/m\Z$-module (for notational convenience), we may w.l.o.g. consider $W$ as a sequence from $\Z/m\Z$, say w.l.o.g. $W=w_1\cdots w_n$, where $\ord(w_i)=m$ for $i\leq n-t$ (in view of the hypothesis $\gcd(w_i,\exp(G))=1$ for $w_i|{W'}^{-1}W$, where $|W'|=t$). Observe that we may assume $|S|=n+|G|-1$ and that there are distinct $x,\,y\in G$ with
$x^{n-t+1}y^{n-t+1}|S$, else Theorem \ref{thm-wegzi} implies the theorem (as if such is not the case, then in view of $\h(S)\leq n=|W|$ there would exist a $n$-setpartition of $S$ with $t$ sets of cardinality one). Since $\sigma(W)=0$, we may w.l.o.g. by translation
assume $x=0$.

Let $A\subseteq \supp(S)$ be all those elements with multiplicity at least $n-t$, let $K=\langle A\rangle$, let $R|S$ be the maximal subsequence with $\supp(R)=A$, let $T:=\prod_{g\in A}g^{\dd^*(K)}$, and let $T_0=\prod_{g\in A}g^{n-t}$. Notice $\{0,y\}\subseteq A$.
Hence, since $\h(S)\leq n$ and $|W|-t=n-t\geq \dd^*(G)$ by hypothesis, it follows from Lemma \ref{lem-split} applied to $A$ that the hypotheses of Theorem \ref{thm-ccd-bonus} hold with $n$ taken to be $n-t$, $B_i=A$ for $i=1,\ldots,\dd^*(K)$, and $S'=T_0(R^{-1}S)|S$.

If $|R|\leq |A|(n-t)+t$, then Theorem \ref{thm-wegzi} once more completes the proof (as then there exists an $n$-setpartition of $S$ with at least $t$ sets of cardinality one, in view of $\h(S)\leq n$). Therefore $|R|\geq |A|(n-t)+t+1$, and so \be\label{chowder}|S|-|S''|=|S|-|S'|=|R|-|T_0|\geq t+1,\ee where $S''$ is as given by Theorem \ref{thm-ccd-bonus}. Consequently, if Theorem \ref{thm-ccd-bonus}(i) holds, then $\Sigma_{n-t}({W'}^{-1}W,S'')=G$ with $|{S''}^{-1}S|\geq t$, whence $\Sigma_{n}(W,S)=\Sigma_{|W|}(W,S)=G$ follows, as desired. On the otherhand, if Theorem \ref{thm-ccd-bonus}(ii) holds, then Theorem \ref{thm-ccd-cor-davstar}(ii)(a)(d) implies $$\left(\Sum{i=1}{n-t}w_i\right)g+H\subseteq\Sum{i=1}{n-t}w_i\cdot A_i\subseteq \Sigma_{n-t}({W'}^{-1}W,S''),$$ where $g$, $H$ and the $A_i$ are as given by Theorem \ref{thm-ccd-cor-davstar}(ii), whence (\ref{chowder}), $\supp({S''}^{-1}S)\subseteq g+H$ (in view of (ii)(a)), and $\sigma(W)=0$ imply $$H=\left(\Sum{i=1}{n}w_i\right)g+H\subseteq \Sigma_{n}(W,S)=\Sigma_{|W|}(W,S),$$ as desired.
\end{proof}

\smallskip

Finally, we show Conjecture \ref{conjetura} holds when $\h(S)\geq \D(G)-1$. For this, we need the following modification of a result from \cite{Gao-M+D-1}. Note that the lemma is applicable for $W\in \Fc(\Z)$ as well, by applying it with the sequence $(w_1\cdot 1)(w_2\cdot 1)\cdots (w_n\cdot 1)$, where $W=w_1\cdots w_n$ and $1$ is the multiplicative identity of $R$.

\smallskip

\begin{lemma}\label{lemadavid}
Let $R$ be a ring of order $m$, let
$G$ be the additive group of $R$, and let $W,\,S\in \F(R)$ be nontrivial sequences with $|S|\geq |W|+\D(G)-1$. If $\vp_0(S)=\h(S)\geq
\D(G)-1$, then
$$\Sigma(W,S)=\Sigma_{|W|}(W,S).$$
\end{lemma}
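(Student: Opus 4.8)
The plan is to show the inclusion $\Sigma(W,S)\subseteq \Sigma_{|W|}(W,S)$ (the reverse being trivial), i.e., that every weighted subsequence sum of \emph{fewer} than $|W|$ terms is already realized as an $|W|$-term weighted subsequence sum. So fix $W'|W$ with $|W'|=k<|W|=n$, fix $S'|S$ with $|S'|=k$, and fix a value $v=\sum_{i=1}^{k}w'_i s'_i\in \Sigma_k(W,S)$; we must produce a realization of $v$ using all $n$ weights. The idea is to "pad" the shorter sum by using the remaining $n-k$ weights against copies of the term $0\in R$. Since $\vp_0(S)=\h(S)\geq \D(G)-1$, the sequence $S$ contains a large block $0^{\vp_0(S)}$; and since $|S|\geq |W|+\D(G)-1$, we have $\vp_0(S)\geq \D(G)-1\geq$ (enough) $0$'s available even after removing the $k\leq n-1$ terms used for $S'$, provided we are slightly careful about whether $S'$ itself uses some of the $0$'s.

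The key step is this. Write $W'' = WW'^{-1} = w''_1\cdots w''_{n-k}$ for the leftover weights. I would like to append $n-k$ zeros, forming $v = \sum_{i=1}^{k}w'_is'_i + \sum_{j=1}^{n-k}w''_j\cdot 0$, which realizes $v$ as an $n$-term weighted subsequence sum from the subsequence $S'0^{n-k}$ of $S$ — but this requires $S'0^{n-k}|S$, i.e., that $S$ has at least $n-k$ copies of $0$ left after removing $S'$. Let $a = \vp_0(S')$ be the number of zeros already used. Then the zeros remaining are $\vp_0(S) - a \geq (\D(G)-1) - a$. Since $|S'|=k$ and $k - a$ of its terms are nonzero, and since $|S| - k = (\text{number of terms of }S\text{ outside }S')$, the count of \emph{nonzero} terms of $S$ not in $S'$ is at least $(|S|-k) - (\vp_0(S)-a)$. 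When $\vp_0(S)-a \geq n-k$ we are done immediately. Otherwise $\vp_0(S) - a < n - k$, and here I would instead note that $v = \sum_{i=1}^{k}w'_is'_i$ can be rewritten by \emph{replacing} zero terms: any term $0$ in $S'$ contributes $w'_i\cdot 0 = 0$, so it is harmless to swap which zeros of $S$ we assign to which weights. The cleanest route: choose the realization of $v$ to use as \emph{few} zeros as possible from $S'$, i.e., among all $(W',S')$ realizing $v$ with $|S'|=k$ pick one minimizing $\vp_0(S')$; then argue that if still $\vp_0(S)-\vp_0(S') < n-k$, the bound $|S|\geq n + \D(G)-1$ forces $S\,S'^{-1}$ to contain $\geq (n+\D(G)-1) - k - (\D(G)-1) = n-k$ terms that are either zero or can be absorbed into a shorter representation, giving the padding we need.

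Actually the structurally correct finish, and the one I would pursue, is an induction on $n - k$ that peels off one weight at a time: it suffices to show $\Sigma_k(W,S)\subseteq \Sigma_{k+1}(W,S)$ for every $k$ with $\h(S)\le k < |W|$. Given $v\in\Sigma_k(W,S)$ realized by $(W',S')$ and a leftover weight $w''$, I want to append some term $s\in\supp(S\,S'^{-1})$ with $w''\cdot s = 0$; the natural choice is $s=0$, which works as soon as $0|S\,S'^{-1}$. The only obstruction is $\vp_0(S')=\vp_0(S)=\h(S)$, i.e., $S'$ swallowed \emph{all} the zeros. But then $\vp_0(S')=\h(S)\ge\D(G)-1\ge k$... which is impossible since $\vp_0(S')\le|S'|=k$ can only equal $k$ if $S'=0^k$ and then $v=0$, a case one handles directly (append another $0$: since $|S|\ge n+\D(G)-1 > k = |S'|$ forces at least one more term, and if that term is nonzero one still has $\vp_0(S)=\h(S)=k$, meaning $\h(S)=k$ but $|S|>k$, so some nonzero term $g$ has $w''\cdot g$... here use instead $\Sigma_k(W,S)=\sigma'-\text{something}$, or directly invoke that $W'=0^k$ implies $\sigma(WW'^{-1})\equiv\sigma(W)\pmod{m}$ and build a zero-sum extension).

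The main obstacle I anticipate is exactly this degenerate bookkeeping: ensuring that after removing the terms realizing the shorter sum there is always a term $t$ of $S$ left with $w''\cdot t=0$ for the next leftover weight $w''$. The hypotheses $\vp_0(S)=\h(S)\geq\D(G)-1$ and $|S|\geq|W|+\D(G)-1$ are calibrated precisely so that the zeros can never all be consumed before one reaches length $|W|$ — making this a counting argument — but the edge case where the shorter representation is itself supported entirely on $0$ needs the extra observation that $\sigma(WW'^{-1})\equiv\sigma(W)-\sigma(W')$ and a short zero-sum extraction (via $\D(G)$ applied inside $\supp(S)$) to realize $0$ with the full weight sequence. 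I expect the author's proof to organize this as a clean induction reducing $|W|$ by one, moving a single weight $w_i$ at a time onto a conveniently chosen $0$-term, with the $\D(G)-1$ lower bound on $\vp_0(S)$ guaranteeing the supply never runs out.
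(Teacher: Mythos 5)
Your high-level idea --- pad a short representation with zeros, and invoke the Davenport constant when the zeros run out --- is the right one, but the execution has a genuine gap at exactly the point you flag as ``the main obstacle.'' In your inductive step you claim the only obstruction to appending a zero is $\vp_0(S')=\vp_0(S)=\h(S)$ together with $S'=0^k$, deducing this from the chain $\vp_0(S')=\h(S)\geq \D(G)-1\geq k$. But $\D(G)-1\geq k$ is not available: your induction runs over $k\geq \h(S)\geq \D(G)-1$, so the inequality goes the other way, and the genuinely problematic case is $\vp_0(S')=\h(S)<k$ with $S'$ containing all $\h(S)$ zeros of $S$ plus some nonzero terms. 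In that case there is no zero left to append and no contradiction, and your argument stalls; the ``minimize $\vp_0(S')$'' device does not resolve it either, since you never show that a representation with fewer zeros exists.

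The missing step, which is how the paper closes the gap, is to apply the Davenport constant not ``inside $\supp(S)$'' but to the sequence of \emph{products} $w_is_i$. First reduce to a representation $g=\Sum{i=1}{t}w_is_i$ supported on nonzero terms $s_i$ (for $g\neq 0$ simply drop the zero terms; for $g=0$ with $\h(S)<|W|$ extract a nonempty zero-sum subsequence of $(w_1s_1)\cdots(w_{\D(G)}s_{\D(G)})$ with the $s_i$ nonzero, which exists because $\h(S)\geq\D(G)-1$ forces $|W|\geq\D(G)$ and $|S|-\h(S)\geq\D(G)$). Take such a representation with $t$ maximal. If $t\leq |W|-\h(S)-1$, then there remain $|W|-t\geq\h(S)+1\geq\D(G)$ unused weights and $|S|-\h(S)-t\geq\D(G)$ unused nonzero terms, so the length-$\D(G)$ sequence $(w_{t+1}s_{t+1})\cdots(w_{t+\D(G)}s_{t+\D(G)})$ has a nonempty zero-sum subsequence, which extends the representation and contradicts maximality. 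Hence $t\geq|W|-\h(S)$, and padding with $|W|-t\leq\h(S)=\vp_0(S)$ zeros is always possible. Without this maximality-plus-extraction argument, and the two counts showing both leftover pools have size at least $\D(G)$, your proposal does not go through.
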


\begin{proof}
Let $S'|S$ be the subsequence consisting of all nonzero
terms. Let $g\in \Sigma(W,S)$ be
arbitrary. Since $\Sigma_{|W|}(W,S)\subseteq \Sigma(W,S)$, we need to show that $g\in \Sigma_{|W|}(W,S)$.
%

If $g=0$ and ${\h(S)}\geq |W|$, then $0\in \Sigma_{|W|}(W,0^{\h(S)})\subseteq  \Sigma_{|W|}(W,S)$ (in view of $\vp_0(S)={\h(S)}$), as desired. If $g=0$ and ${\h(S)}\leq |W|-1$, then ${\h(S)}\geq \D(G)-1$ implies $|W|\geq \D(G)$, while $|S'|\geq |W|+\D(G)-1-{\h(S)}\geq \D(G)$. Thus \be\label{sendit} g \in \Sigma(W,S')\ee follows from the definition of $\D(G)$ applied to the sequence $(w_1s_1)(w_2s_2)\cdots (w_{\D(G)}s_{\D(G)})\in\Fc(R)$, where $w_1\cdots w_{\D(G)}|W$ and $s_1\cdots s_{\D(G)}|S'$. On the otherhand, if $g\neq 0$, then (\ref{sendit}) holds trivially. Thus we can assume (\ref{sendit}) regardless, and we choose $W_1|W$ and $S_1|S'$ such that $W_1=w_1\cdots w_t$, $S_1=s_1\cdots s_t$ and $g=\Sum{i=1}{t}w_is_i$, with $t$ maximal.

Note $t\leq |W|$. If $t\geq |W|-{\h(S)}$, then $g\in \Sigma_{|W|}(W,S_10^{\h(S)})\subseteq \Sigma_{|W|}(W,S)$, as desired. So we may
assume \be\label{teatime}t \leq |W|-{\h(S)}-1.\ee Hence
\be\label{howly-premp}|S_1^{-1}S'|\geq |W|+\D(G)-1-{\h(S)}-t\geq \D(G).\ee Observe, in view of
(\ref{teatime}) and the hypotheses, that
\be\label{howly}|W_1^{-1}W|=|W|-t\geq {\h(S)}+1\geq \D(G).\ee Let $S'=s_1\cdots s_ts_{t+1}\cdots s_{|S|-{\h(S)}}$ and $W=w_1\cdots w_tw_{t+1}\cdots w_n$. In view of (\ref{howly-premp}) and (\ref{howly}), let $$T:=(w_{t+1}s_{t+1})(w_{t+2}s_{t+2})\cdots (w_{t+\D(G)}s_{t+\D(G)})\in\Fc(R).$$
 Observe $|T|=\D(G)$, whence the definition of $\D(G)$ implies $T$ has a zero-sum subsequence, say (by re-indexing if necessary) $(w_{t+1}s_{t+1})(w_{t+2}s_{t+2})\cdots (w_{t+r}s_{t+r})$, where $r\geq 1$. But now the sequences $w_1\cdots w_{t+r}$ and $s_1\cdots s_{t+r}$ contradict the maximality of $t$, completing the proof.
\end{proof}

\smallskip

Note that Corollary \ref{conjetura-specialcase}(ii) failing with $H$ trivially implies $\h(S)\leq |G|$ for $|S|\leq 2|G|-1$, and that $2|G|-1\geq |G|+\D(G)-1$ in view of the trivial bound $\D(G)\leq |G|$ (see \cite{Alfred-book}). Thus the restriction $\h(S)\leq |G|$ in Corollary \ref{conjetura-specialcase} can be dropped when $|S|\leq 2|G|-1$, and thus, in particular, when $|S|=|G|+\D(G)-1$.

\smallskip

\begin{corollary} \label{conjetura-specialcase}
Let $G$ be a finite abelian group, and let $W\in \Fc(\Z)$ with $|W|=|G|$ and $\gcd(w_i,|G|)=1$ for all $w_i|W$. If $S\in \Fc(G)$ with  $|S|
\geq |G| +\D(G)-1$ and $|G|\geq \h(S)\geq \D(G)-1$, then either (i)  $\Sigma_{|G|}(W,S)=G$ or (ii) there exists a coset $g+H$ such that all but at most $|G/H|-2$ terms of $S$ are from $g+H$.
\end{corollary}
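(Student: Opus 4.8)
The plan is to derive (i) from the failure of (ii), using only Corollary \ref{cor-spud} and Lemma \ref{lemadavid}. Assume (ii) fails (the case $|G|=1$ being trivial). First I would normalize $S$ by a translation: picking a term $g_0$ of maximal multiplicity and replacing $S$ by $-g_0+S$ makes $\vp_0(S)=\h(S)$, and this change affects neither the failure of (ii) (a coset $g+H$ witnessing (ii) for $-g_0+S$ yields $g+g_0+H$ for $S$) nor the conclusion (i), since $\Sigma_{|G|}(W,-g_0+S)=\Sigma_{|G|}(W,S)-\left(\sum_i w_i\right)g_0$ is a translate of $\Sigma_{|G|}(W,S)$ and so equals $G$ exactly when $\Sigma_{|G|}(W,S)$ does; note that no assumption on $\sigma(W)$ is needed for this. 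From now on $\vp_0(S)=\h(S)$ and (ii) still fails.

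Next I would apply Corollary \ref{cor-spud} to $S$ and $W$ with $h:=\h(S)$. Its hypotheses all hold: the failure of (ii) for the trivial subgroup gives $\h(S)\le|S|-|G|+1$; by (\ref{davconstant-triv-bounds}) we have $\h(S)\ge\D(G)-1\ge\dd^*(G)$, so $\max\{\h(S),\dd^*(G)\}=\h(S)$, putting $h=\h(S)$ in the admissible range $[\max\{\h(S),\dd^*(G)\},\,|S|-|G|+1]$; the $w_i$ are coprime to $|G|$, hence to $\exp(G)$; and $|W|=|G|\ge\h(S)=h$. Since (ii) fails, Corollary \ref{cor-spud} yields $\Sigma_{\h(S)}(W,S)=G$, and in particular $\Sigma(W,S)=G$.

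Finally, endowing $G$ with a ring structure $R$ and $W$ with the corresponding sequence over $R$ (as in the remark preceding Lemma \ref{lemadavid}), I would invoke Lemma \ref{lemadavid}: its hypotheses $|S|\ge|G|+\D(G)-1=|W|+\D(G)-1$ and $\vp_0(S)=\h(S)\ge\D(G)-1$ hold, so $\Sigma(W,S)=\Sigma_{|W|}(W,S)=\Sigma_{|G|}(W,S)$. Combining with the previous paragraph gives $\Sigma_{|G|}(W,S)=G$, i.e.\ (i). There is no real combinatorial difficulty here; the only two points needing care are that the translation step is legitimate without assuming $\sigma(W)\equiv 0\bmod|G|$ (being all of $G$ is translation-invariant), and that the inequality chain $\h(S)\ge\D(G)-1\ge\dd^*(G)$ from (\ref{davconstant-triv-bounds}) is precisely what allows $h=\h(S)$ to be used as the parameter in Corollary \ref{cor-spud}.
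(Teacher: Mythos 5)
Your proposal is correct and follows essentially the same route as the paper's proof: normalize so that $\vp_0(S)=\h(S)$, use the failure of (ii) for the trivial subgroup to get $\h(S)\leq |S|-|G|+1$, apply Corollary \ref{cor-spud} with $h=\h(S)$ (using $\h(S)\geq \D(G)-1\geq \dd^*(G)$), and convert $\Sigma(W,S)=G$ into $\Sigma_{|G|}(W,S)=G$ via Lemma \ref{lemadavid}. The only difference is the order in which the two ingredients are invoked, which is immaterial; your explicit justification of the translation step is a welcome bit of extra care.
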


\begin{proof}We may w.l.o.g. assume $\vp_0(S)=\h(S)$. Thus our hypotheses allow us to apply Lemma \ref{lemadavid}, whence \be\label{ducks}\Sigma(W,S)=\Sigma_{|G|}(W,S).\ee Since we may assume (ii) fails with $H$ trivial, it follows that $\h(S)\leq |S|-|G|+1$. Consequently, since $|W|=|G|\geq \h(S)$, then the result follows from (\ref{ducks}) and Corollary \ref{cor-spud} applied with $h=\h(S)$ (in view of $\D(G)\geq \dd^*(G)+1$).\end{proof}

\textbf{Acknowledgements:}  We thank F. Kainrath, A. Geroldinger, and W. Schmid, for several helpful conversations regarding Section 4.

\end{document}